\def\BibTeX{{\rm B\kern-.05em{\sc i\kern-.025em b}\kern-.08em
		T\kern-.1667em\lower.7ex\hbox{E}\kern-.125emX}}
  \newtheorem{theorem}{Theorem}
  \newtheorem{definition}{Definition}
  \newtheorem{corollary}{Corollary}
  \newtheorem{lemma}{Lemma}
 \newtheorem{assumption}{Assumption}
\newtheorem{remark}{Remark}
 \newtheorem{proposition}{Proposition}
\begin{document}
\title{Distributed Stochastic Proximal Algorithm on Riemannian Submanifolds for Weakly-convex Functions}
\author{Jishu Zhao, Xi Wang, Jinlong Lei,  \IEEEmembership{Member, IEEE}, and Shixiang Chen,
	\thanks{This paragraph of the first footnote will contain the date on 
		which you submitted your paper for review. This work was funded by the National Natural Science Foundation of
		China under Grant No. 72271187, and the Fundamental Research Funds for the Central
		Universities. }
	\thanks{Jishu Zhao, Jinlong Lei are with the Department of Control Science and Engineering, Tongji University, Shanghai, 201804, China. Lei is also with the State Key Laboratory of Autonomous Intelligent Unmanned Systems, and Frontiers Science Center for Intelligent Autonomous Systems, Ministry of Education, Tongji University, Shanghai, 201804, China. (e-mail: 2111115@tongji.edu.cn, leijinlong@tongji.edu.cn). }
	\thanks{Xi Wang is with the School of Electrical Engineering and Telecommunications, University of New South Wales, Sydney, NSW 2052, Australia (e-mail: xi.wang14@unsw.edu.au).}
	\thanks{Sixiang Chen is with the Key Laboratory of the Ministry of Education for Mathematical Foundations and Applications of Digital Technology, School of Mathematical Sciences, University of Science and Technology of China, Hefei, 230026, China  (e-mail: shxchen@ustc.edu.cn).}
}

\maketitle
\begin{abstract}
	This paper aims to investigate the distributed stochastic optimization problems on compact embedded submanifolds (in the Euclidean space) where the local cost functions are weakly-convex. To address the manifold structure, we propose a distributed Riemannian stochastic proximal algorithm framework by utilizing the retraction and Riemannian consensus protocol, and analyze three specific algorithms: the distributed Riemannian stochastic subgradient, proximal point, and prox-linear algorithms. When the initial points satisfy certain conditions, we show that the iterates generated by this framework converge to a nearly stationary point in expectation while achieving consensus. We further establish the convergence rate of the algorithm framework as $\mathcal{O}(\frac{1+\kappa_g}{\sqrt{k}})$ where $k$ denotes the number of iterations and $\kappa_g$ shows the impact of manifold geometry on the algorithm performance. Finally, numerical experiments are implemented to demonstrate the theoretical results and show the empirical performance.
\end{abstract}

\begin{keywords}
	Distributed stochastic optimization, Riemannian manifolds, Weakly-convex, Retraction-based.
\end{keywords}	

\section{Introduction}
\IEEEPARstart{W}{ith} the increasing need to reduce computational cost and privacy protection in large-scale optimization problems, distributed optimization has attracted much research attention in recent years, where all the agents work cooperatively to solve the problem. It is worth noting that decision variables in some particular applications have manifold structure, e.g., distributed spectral analysis \cite{huang2020communication, kempe2004decentralized}, deep learning in hyperbolic space \cite{Yang2024ICML}, or with orthogonal constraints \cite{vorontsov2017orthogonality}, sparse blind deconvolution \cite{Zhang_2017_CVPR}. Therefore, agents need to obey a nonlinear and nonconvex manifold constraint when solving the problem, which brings challenges and difficulties in designing distributed algorithms. 

In this paper, we consider the following distributed stochastic optimization problems on Riemannian submanifolds
\begin{equation}\label{p1}
\min_{X \in \mathcal{M}} ~f(X) = \frac{1}{N} \sum_{i=1}^{N} f_i(X),~~f_i(X):=\mathbb{E}_{\xi_i}[\tilde{F}_i(X, \xi_i)],
\end{equation}
where $\mathcal{M}$ is a compact embedded submanifold of the $n$-dimensional Euclidean space $ \mathbb{R}^n$, and cost functions $f_i: \mathcal{M} \mapsto \mathbb{R}$ are weakly-convex functions in the ambient Euclidean space $\mathbb{R}^n$. Herein and after, "submanifolds" represents the compact embedded submanifolds unless otherwise specified. Although each agent $i$ only has its local information ($\tilde{F}_i,~\xi_i$), it can interact with its direct neighbors through a connected graph $\mathcal{G}=(\mathcal{N},\mathcal{E})$ to collaboratively solve problem \eqref{p1}, where $\mathcal{N}$ is the set of $N$ nodes and $\mathcal{E}$ is the set of edges.

Since problem \eqref{p1} is a distributed stochastic optimization on Riemannian submanifolds without convexity and linearity, the centralized Riemannian stochastic algorithms and the previous methods developed in the Euclidean space (such as \cite{shi2015proximal}) cannot be directly applied to solve it. In the following, we  will initiate a detailed literature review.

\subsection{Literature Review}
Since the seminal work of Robbins and Monro \cite{robbins1951stochastic} in the 1950s, stochastic approximation (SA) has become a fundamental methodology for solving stochastic optimization problems. The central idea of SA is to approximate exact gradients using randomly sampled gradient estimates. Owing to its ability to significantly reduce computational complexity and handle uncertainty, SA has found widespread applications in large-scale optimization problems, particularly in expected risk minimization for supervised learning \cite{bottou2010}. The dynamical analysis of generalized Robbins-Monro algorithms was recently extended to Riemannian settings in \cite{karimi2022dynamics}, providing theoretical foundations closely related to the present work.

In recent years, substantial attention has been devoted to centralized Riemannian stochastic optimization. For instance, \cite{6487381} introduced Riemannian stochastic gradient descent (RSGD) and established its almost sure convergence to critical points. Building upon RSGD, several variance-reduced methods have been developed to accelerate convergence, including \cite{NIPS2016_98e6f172, doi:10.1137/17M1116787, pmlr-v75-tripuraneni18a, pmlr-v80-kasai18a}. For optimization problems involving nonsmooth regularizers, \cite{JMLR:v23:21-0314} developed the R-ProxSGD and R-ProxSPB algorithms on the Stiefel manifold. For weakly-convex objectives, \cite{doi:10.1137/20M1321000} proposed a Riemannian stochastic subgradient method on the Stiefel manifold and further extended the convergence analysis to compact embedded submanifolds. More generally, for optimization over proximally smooth sets, including compact $C^2$ submanifolds, \cite{davis2020} developed stochastic proximal methods and analyzed their convergence properties from an Euclidean perspective. In Euclidean spaces, the transition from centralized to distributed optimization is typically enabled by consensus mechanisms based on weighted averaging; (see, eg, \cite{nemirovski2009robust, srivastava2011distributed, pu2021distributed, sayed2014adaptation, lei2022distributed, alghunaim2019distributed}). However, extending centralized Riemannian optimization methods to distributed settings is considerably more challenging. The nonlinear and nonconvex nature of Riemannian manifolds precludes the direct use of Euclidean averaging operations, therefore, weighted-average consensus protocols could be invalid. Although the Riemannian meta-expert framework proposed in \cite{hu2023minimizing} provides a general aggregation mechanism based on the Fréchet mean, it is tailored to centralized scenarios and does not account for the decentralized information structure and communication constraints inherent to distributed optimization. These challenges make the design of distributed algorithms for solving problem \eqref{p1} nontrivial.

The literature on distributed Riemannian optimization can be broadly categorized into intrinsic and extrinsic methods according to their analytical frameworks. Intrinsic methods are formulated entirely on the manifold itself, relying on concepts such as Fr\'echet mean, connections, and geodesics without referring to any embedding ambient space. Existing intrinsic approaches commonly require geodesic convexity (g-convexity) of the objective functions (e.g., \cite{shah2017distributed, chen2024online, 6332485, 11107888}), or alternatively impose the Polyak--Łojasiewicz (PL) condition when g-convexity is unavailable (e.g., \cite{wang2025distributed}). In contrast, extrinsic methods exploit the embedding of the manifold and can leverage tools from Euclidean optimization, leading to simpler computations and more efficient implementations. The following discussions focus on the extrinsic methods, to which our work belongs.

For a specific manifold like the Stiefel manifold, \cite{chen2023local} proposed a consensus protocol and proved that the iterates converge linearly to the induced arithmetic mean in a local region if the initial points are in that region. Based on this, a series of distributed Riemannian optimization algorithms on the Stiefel manifold have emerged, such as \cite{pmlr-v139-chen21g, chen2024decentralized, wang2022decentralized, wang2023} for the deterministic setting and \cite{pmlr-v139-chen21g, wang2022variance, hu2023decentralized, zhao2024stochastic} for the stochastic setting. 
In particular, \cite{pmlr-v139-chen21g} introduced distributed Riemannian stochastic gradient descent and distributed Riemannian gradient tracking, establishing convergence rates of $\mathcal{O}(1/\sqrt{k})$ and $\mathcal{O}(1/k)$, respectively.
Subsequently, \cite{wang2023} developed a distributed Riemannian subgradient method for nonsmooth weakly-convex objectives. Furthermore, variance-reduced stochastic algorithms proposed in \cite{wang2022variance, zhao2024stochastic} improved the convergence rate to $\mathcal{O}(1/k)$. The analyses of these methods rely heavily on structural properties specific to the Stiefel manifold, such as the constant Frobenius norm of feasible points and the availability of closed-form orthogonal projections, which no longer holds on general embedded submanifolds.

For embedded submanifolds, most of the existing distributed optimization methods are projection-based algorithms by treating the manifold as a proximally smooth set and employing projection operators to maintain feasibility. For smooth objectives, \cite{deng2023decentralized} proposed decentralized projected Riemannian gradient descent (DPRGD) and gradient tracking (GT) algorithms, establishing convergence rates matching those obtained on the Stiefel manifold. In addition, \cite{deng2023Douglas-Rachford} developed decentralized Douglas--Rachford splitting methods and achieved an $\mathcal{O}(1/k)$ convergence rate. Extending this framework to composite optimization, \cite{wang2024proxtrack} studied problems involving convex but nonsmooth regularizers and proposed a DRproxGT method. Beyond deterministic settings, \cite{deng2024stochastic} introduced a projection-based stochastic momentum algorithm for smooth optimization and established a convergence rate of $\mathcal{O}(1/k^{2/3})$. However, manifold projections generally do not admit closed-form expressions and must be computed by solving auxiliary optimization problems, thereby increasing computational overhead. Moreover, due to the nonconvex nature of embedded manifolds, the projection mapping is generally not globally unique. Besides, all of the aforementioned methods have not explicitly considered the impact of curvature bounds on algorithm performance.

For clarity, we summarize some representative distributed Riemannian algorithms in Table \ref{tab1}.

\begin{table*}
	\setlength{\abovecaptionskip}{0cm}
	\caption{Comparison of the state-of-the-art distributed Riemannian algorithms on the compact submanifolds for different problem settings.}
	\centering
	\small
	\begin{tabular}{p{2cm} |m{2.2cm}| m{2cm}| m{2cm}| m{2.5cm} |m{2.5cm}}
		\toprule [1pt]     
		Problem &literature &  Cost &Constraint &Alg. framework & Convergence  \\
		\hline
		\multirow{5}{*}{Deterministic}&\multirow{1}{*}{DRGTA\cite{pmlr-v139-chen21g}}&$L$-smooth&\multirow{2}{*}{Stiefel}&\multirow{2}{*}{Retraction-based}&$\mathcal{O}(\frac{1}{\sqrt{k}})$\\
		\cline{2-3}\cline{6-6}
		~&\multirow{1}{*}{DRSM\cite{wang2023}}&weakly-convex&~&~&$\mathcal{O}(\frac{1}{\sqrt{k}})$\\
		\cline{2-6} 
		~&\multirow{1}{*}{DPRGD/GT\cite{deng2023decentralized}}  &\multirow{2}{*}{$L$-smooth}&\multirow{3}{*}{\makecell{embedded  \\submanifolds}}&\multirow{3}{*}{Projection-based}&$\mathcal{O}(\frac{1}{\sqrt{k}})$/$\mathcal{O}(\frac{1}{{k}})$ \\
		\cline{2-2} \cline{6-6}
		~&\multirow{1}{*}{\cite{deng2023Douglas-Rachford}} &~&~&~&$\mathcal{O}(\frac{1}{{k}})$  \\
		\cline{2-3}\cline{6-6}
		~&\multirow{1}{*}{DRproxGT\cite{wang2024proxtrack}} &composite $f_i(x)+r(x)$ &~&~&$\mathcal{O}(\frac{1}{{k}})$ \\
		\midrule[1pt]
		\multirow{4}{*}{Stochastic}&\multirow{1}{*}{DRSGD\cite{pmlr-v139-chen21g}}&\multirow{2}{*}{$L$-smooth}&\multirow{2}{*}{Stiefel}&\multirow{2}{*}{Retraction-based}&$\mathcal{O}(1/\sqrt{k})$\\
		\cline{2-2} \cline{6-6}
		~&\multirow{1}{*}{DRSGT\cite{zhao2024stochastic}}&~&~&~&$\mathcal{O}(1/k)$(sample size$=[q^{-k}]$)\\
		\cline{2-6}
		~&DPRSRM \cite{deng2024stochastic}&$L$-smooth&\multirow{2}{*}{\makecell{embedded\\submanifolds}} &Projection-based&$\mathcal{O}(1/k^{2/3})$\\
		\cline{2-3} \cline{5-6}
		~&our work&  weakly-convex&~&Retraction-based&$\mathcal{O}((1+\kappa_g)/\sqrt{k})$\\
		\bottomrule [1pt]     
	\end{tabular}
	\label{tab1}
\end{table*}
\subsection{Main Contributions}
This paper develops retraction-based distributed Riemannian stochastic algorithms for weakly convex optimization over general Riemannian submanifolds. The main contributions are summarized as follows.

First, we propose a unified distributed Riemannian stochastic proximal framework for solving \eqref{p1} that gives rise to stochastic subgradient, proximal-point, and proximal linear methods. The framework employs retractions to maintain feasibility on the manifold, encompassing projection-like retractions \cite{absil2012projection} used in existing projection-based methods while potentially reducing computational cost.

Second, under suitable initialization and diminishing step sizes, the iterates remain within a prescribed local region and converge in expectation to a stationary point. Moreover, a convergence rate of $\mathcal{O}(\frac{1+\kappa_g}{\sqrt{k}})$ is established, explicitly characterizing the effect of the geodesic curvature bound $\kappa_g$ on algorithm performance.

Finally, we apply the proposed framework to distributed orthogonal sparse dictionary learning and generalized eigenvalue computation on generalized Stiefel manifolds. Numerical experiments demonstrate the effectiveness of the proposed algorithms and corroborate the theoretical results

\subsection{Paper Organization}
This paper is organized as follows. Section \ref{sec-2} introduces preliminaries of Riemannian geometry and the stationary measurement for weakly-convex functions. We reformulate the distributed Riemannian optimization problem and propose the algorithm framework with three specific algorithms in Section \ref{sec-3}. The main convergence results are given in Section \ref{sec-4}. Numerical experiments are provided in Section \ref{sec-5}, while some concluding remarks are given in Section \ref{sec-6}.

\section{Preliminaries}\label{sec-2}

{\noindent\bf Notations} Let $X^\top$ denote the transposition of $X$ and $tr(\cdot)$ denote the trace operator. Define the $N$-fold Cartesian product of $\mathcal{M}$ as $\mathcal{M}^N:=\mathcal{M}\times \cdots \times \mathcal{M}$. Let $I_n$ denote the $n\times n$ identity matrix and $\mathbf{1}_N$ denote the $N$-dimensional vector with all ones, respectively. $\mathcal{P}$ represents the orthogonal projection operator.   The Euclidean distance from a point $x \in \mathbb{R}^n$ to $\mathcal{M}$ is denoted by $d(x,\mathcal{M}):=\inf_{X\in \mathcal{M}} \|X-x\|$, where $\|\cdot\|$ is the Euclidean norm ($2$-norm for vectors and F-norm for matrices). For a function $\phi$ on $\mathcal{M}$, denote by $\nabla \phi$ the Euclidean gradient and $\text{grad}~\phi$ the Riemannian gradient.  

\subsection{Riemannian Geometry}
Before presenting the algorithm, we first introduce some basic concepts and geometric properties of the embedded submanifolds in this section. The reader can refer to literature \cite{AbsilMahonySepulchre+2008, lee2018introduction} for more information.

A topological space is called a manifold if each of the point has a neighborhood that is diffeomorphic to $\mathbb{R}^n$. For two manifolds such that $\mathcal{M} \subset \tilde {\mathcal{M}}$, if the manifold topology of $\mathcal{M}$ coincides with its subspace topology induced from the topological space $\tilde{\mathcal{M}}$, then $\mathcal{M}$ is called an embedded submanifold of $\tilde{\mathcal{M}}$. In this work, we consider the case that  $\mathcal{M}$ is an embedded submanifold  of $\mathbb{R}^n$. For any $X\in \mathcal{M}$, we have $i(X) \in \mathbb{R}^n$, where $i:\mathcal{M} \hookrightarrow \mathbb{R}^n$ is the natural inclusion of $\mathcal{M}$ in $\mathbb{R}^n$ (the inclusion $i$ is omitted in the following to simplify the notation). 

At any point $X\in \mathcal{M}$, tangent vectors are defined as the tangents of parametrized curves passing through $X$. Thus, a curve $\gamma,~\gamma(0) = X$ induces a tangent vector $\dot \gamma (0)$ at $X$. We denote the tangent space at a point $X$ by $T_X\mathcal{M}$, including all tangent vectors at $X$. For the case that $\mathcal{M}$ are submanifolds of $\mathbb{R}^n$, it is free to define $\gamma'(0):=\lim_{t\rightarrow 0} \frac{\gamma(t) - \gamma(0)}{t}$. The tangent space of $\mathcal{M}$ can be identified as $T_X\mathcal{M}\triangleq \{\gamma'(0):~\gamma \text{ is curve in }\mathcal{M},~\gamma(0) = x\}$ and its orthogonal complement to the Euclidean space is denoted by the normal space $N_X\mathcal{M}$.  The metric on the tangent space $T_X\mathcal{M}$ is induced from the Euclidean inner product $\langle u,v \rangle $, for $u,v \in T_X\mathcal{M}$. Then the length of a vector $v \in T_X\mathcal{M}$ is also equivalent to the Euclidean norm $\|v\|$.

The covariant derivative is a suitable generalization of the classical directional derivative. We can denote the covariant derivatives both on $\mathcal{M}$ and in the ambient space $\mathbb{R}^n$. To compare the two covariant derivatives along curves, we introduce the Gauss formula along a curve as follows.
\begin{lemma}\label{gf} 
	\cite[Corollary 8.3]{lee2018introduction} Suppose $\mathcal{M}$ is an embedded submanifold of $\mathbb{R}^n$, and $\gamma:[0,1]\mapsto \mathcal{M} \subset \mathbb{R}^n$. Let $\eta$ denote a smooth vector field along $\gamma$ that is everywhere tangent to $\mathcal{M}$. Then, the Levi-Civita connection $\nabla_{\dot\gamma}{\eta} $ on $\mathcal{M}$ is related to the classical vector derivative $\bar \nabla_{\dot \gamma} \eta$ on $\mathbb{R}^n$ as 
	\begin{align*}
	\nabla_{\dot\gamma}\eta=\bar \nabla_{\dot \gamma} \eta - \Pi(\dot\gamma, \eta),
	\end{align*}
	where $\Pi(\cdot,\cdot)$ is the second fundamental form on $\mathcal{M}$ and the Euclidean vector derivative $\bar \nabla_{\dot \gamma} \eta$ is given by
	$\bar \nabla_{\dot \gamma} \eta(X)= \lim_{t \to 0} \frac{\eta(X+t \dot \gamma) - \eta(X)}{t}.$
\end{lemma}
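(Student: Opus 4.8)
The plan is to derive the identity as the orthogonal decomposition of the ambient flat connection into tangential and normal parts, and then to identify the tangential part with the Levi-Civita connection $\mathcal{D}$ by invoking the uniqueness theorem. Since $\mathcal{M}\subset\mathbb{R}^n$ is an embedded submanifold, at every point $X=\gamma(t)$ the ambient space splits orthogonally as $\mathbb{R}^n=T_X\mathcal{M}\oplus N_X\mathcal{M}$, and $\mathcal{P}$ projects onto $T_X\mathcal{M}$. Applying this splitting to the Euclidean vector derivative of a tangent field $\eta$ along $\gamma$, I would write
\begin{equation*}
\nabla_{\dot\gamma}\eta=\mathcal{P}\big(\nabla_{\dot\gamma}\eta\big)+\big(I_n-\mathcal{P}\big)\big(\nabla_{\dot\gamma}\eta\big),
\end{equation*}
and \emph{define} the second fundamental form along the curve as the normal component $\Pi(\dot\gamma,\eta):=(I_n-\mathcal{P})(\nabla_{\dot\gamma}\eta)$. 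The claim then reduces to showing that $\mathcal{D}_{\dot\gamma}\eta=\mathcal{P}(\nabla_{\dot\gamma}\eta)$.

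To establish this, I would check that the tangential operator $\eta\mapsto\mathcal{P}(\nabla_{\dot\gamma}\eta)$ satisfies the axioms that characterize the covariant derivative of the induced metric, and then appeal to uniqueness. The $\mathbb{R}$-linearity in $\eta$ and the Leibniz rule in a scalar factor follow immediately from the corresponding properties of the Euclidean derivative $\nabla_{\dot\gamma}$ together with the linearity of $\mathcal{P}$. Metric compatibility follows from the Euclidean product rule: for tangent fields $\eta,\zeta$ along $\gamma$,
\begin{equation*}
\frac{d}{dt}\langle\eta,\zeta\rangle=\langle\nabla_{\dot\gamma}\eta,\zeta\rangle+\langle\eta,\nabla_{\dot\gamma}\zeta\rangle=\langle\mathcal{P}(\nabla_{\dot\gamma}\eta),\zeta\rangle+\langle\eta,\mathcal{P}(\nabla_{\dot\gamma}\zeta)\rangle,
\end{equation*}
where the normal components drop out because $\eta,\zeta\in T_X\mathcal{M}$. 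Symmetry (torsion-freeness) is obtained by passing to local tangent extensions of the fields and using that the ambient flat connection is torsion-free while the Lie bracket of two tangent fields remains tangent, so that the antisymmetric part of $\nabla$ has no normal component; equivalently, this shows that $\Pi$ is symmetric. By the uniqueness of the Levi-Civita connection of the induced metric, the tangential part must coincide with $\mathcal{D}_{\dot\gamma}\eta$, which yields $\mathcal{D}_{\dot\gamma}\eta=\nabla_{\dot\gamma}\eta-\Pi(\dot\gamma,\eta)$.

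I expect the only genuinely delicate step to be the verification of symmetry/torsion-freeness of the tangential connection, since this is precisely what distinguishes the Levi-Civita connection from an arbitrary metric-compatible connection and is what forces the normal defect to be the symmetric second fundamental form. It requires extending $\eta$ and $\dot\gamma$ to vector fields on a neighborhood in $\mathcal{M}$ and invoking that the bracket of two tangent fields stays tangent; the remaining properties are direct consequences of the Euclidean Leibniz rule and the orthogonality of the decomposition, and are routine. Since the statement is quoted from \cite[Corollary 8.3]{lee2018introduction}, one may alternatively simply cite it, but the self-contained argument above makes transparent why the correction term $\Pi(\dot\gamma,\eta)$ appears and why it is the normal component of the ambient derivative.
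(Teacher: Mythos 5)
Your argument is correct and is essentially the standard proof of the Gauss formula found in the cited reference \cite[Ch.~8]{lee2018introduction}: the paper itself does not prove this lemma but quotes it, and the canonical derivation is exactly the one you give, namely splitting $\nabla_{\dot\gamma}\eta$ into tangential and normal parts, checking that the tangential operator is metric-compatible and torsion-free, and invoking uniqueness of the Levi-Civita connection. The only point worth making explicit in a fully written-out version is the tensoriality of $\Pi$ (that $(I_n-\mathcal{P})(\nabla_{\dot\gamma}\eta)$ at a point depends only on the pointwise values of $\dot\gamma$ and $\eta$, not on their extensions), which follows from the symmetry argument you already sketch.
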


Then, we formally define the extrinsic geodesic curvature.
\begin{definition}\label{g-c}
	\cite[Chp. 8]{lee2018introduction} Suppose $\gamma$ is a unit-speed geodesic in $\mathcal{M}$. The extrinsic geodesic curvature of $\gamma$ is the length of the Euclidean acceleration vector field in the ambient space $\mathbb{R}^n$, which is the function $\kappa: I \mapsto \mathbb{R}$ given by $\kappa(t) = \|\bar{\nabla} _{\dot \gamma}\dot \gamma (t)\|$.
\end{definition}

We introduce the concept of exponential map to connect the tangent space and the manifold. The length of a curve $\gamma$ is defined as $L(\gamma):= \int_{1}^{0}  \sqrt{ \langle\dot{\gamma}(0), \dot{\gamma}(0)\rangle} dt$. Geodesics, the natural generalization of straight lines on a Riemannian manifold, are curves that locally minimize length between their endpoints. For a unique geodesic such that $\gamma(0) = X,~ \dot{\gamma}(0) = v \in T_X \mathcal{M}$, an exponential map $\text{Exp}_X$ maps a tangent vector $v$ to a point $Y: =  \text{Exp}_X(v) \in \mathcal{M}$. The Riemannian distance between $X,~Y$ is defined as $d_\mathcal{M}(X,Y):=\|\text{Exp}_X^{-1}(Y)\|$. 

Since the exponential map generally has no closed form, the retraction $\mathcal{R}_X: T_X \mathcal{M}\mapsto \mathcal{M}$ is introduced as a first-order approximation of the exponential map with the requirements that 1) $\mathcal{R}_X(0_X) = X$, where $0_X$ is the zero vector of $T_X\mathcal{M}$; 2) the differential of $\mathcal{R}_X$ at $0_X$ is the identity map, i.e., $\mathrm{D} \mathcal{R}_X (0_X) = id_{T_X \mathcal{M}} $, where the differential of the retraction is the linear operator $\mathrm{D}R_x(v):T_x\mathcal M\to T_{R_x(v)}\mathcal M \cong T_x\mathcal M$ that maps $v$ to $\mathrm{D}R_X(v)$ at point $X \in \mathcal{M}$. The following lemma collects useful properties of retraction maps.
\begin{lemma}\label{lem-r}
	\cite{BoumalNicolas} Let $\mathcal{R}_X: T_{X}\mathcal{M} \mapsto \mathcal{M}$ denote a retraction map at $X$. Then there exist constants $M_1, M_2 \geq 0$ such that for any $ X \in \mathcal{M}$ and $v \in T_{X}\mathcal{M}$,
	\begin{align}
	\|\mathcal{R}_{X}(v) -X\| \leq M_1\|v\|, \label{R-1}\\
	\|\mathcal{R}_X (v) -(X+v)\| \leq M_2\|v\|^2 \label{R-2}.
	\end{align}
\end{lemma}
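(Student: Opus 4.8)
The plan is to split the estimates into a \emph{local} regime near the zero section, where the two defining properties of a retraction force the correct order of vanishing via Taylor's theorem, and a \emph{global} regime of large $\eta$, where compactness of $\mathcal{M}$ makes both bounds hold for trivial reasons. Throughout I would exploit that $\mathcal{M}$ is compact, hence bounded in $\mathbb{R}^n$: fix $D$ with $\|X\| \leq D$ for all $X \in \mathcal{M}$, so that $\|\mathcal{R}_X(\eta) - X\| \leq 2D$ for every admissible pair $(X,\eta)$.

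For the local regime, fix $X$ and set $h_X(\eta) := \mathcal{R}_X(\eta) - (X + \eta)$. The requirements $\mathcal{R}_X(0_X) = X$ and $D\mathcal{R}_X(0_X) = \mathrm{id}_{T_X\mathcal{M}}$ give $h_X(0_X) = 0$ and $Dh_X(0_X) = D\mathcal{R}_X(0_X) - \mathrm{id} = 0$. Since $\mathcal{R}$ is smooth and $X + \eta$ is affine in $\eta$, the second-order Taylor expansion with integral remainder yields $\|h_X(\eta)\| \leq \tfrac{1}{2}\big(\sup_{0 \leq t \leq 1}\|D^2\mathcal{R}_X(t\eta)\|\big)\|\eta\|^2$. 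To make this uniform in $X$, I would restrict to the disk bundle $\{(X,\eta) : \|\eta\| \leq \delta\}$, which is compact since the base $\mathcal{M}$ is compact and each fiber disk is compact; continuity of the second derivative of $\mathcal{R}$ then bounds the supremum by a single constant $C$, giving $\|h_X(\eta)\| \leq C\|\eta\|^2$ whenever $\|\eta\| \leq \delta$. The same device applied to $\mathcal{R}_X(\eta) - X = \int_0^1 D\mathcal{R}_X(t\eta)[\eta]\,dt$ controls $\|\mathcal{R}_X(\eta) - X\| \leq C'\|\eta\|$ on the same compact set.

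For the global regime $\|\eta\| > \delta$, I would use only the uniform bound $\|\mathcal{R}_X(\eta) - X\| \leq 2D$. This immediately gives $\|\mathcal{R}_X(\eta) - X\| \leq (2D/\delta)\|\eta\|$; and by the triangle inequality together with $\|\eta\| \leq (1/\delta)\|\eta\|^2$ and $2D \leq (2D/\delta^2)\|\eta\|^2$ on this range, $\|\mathcal{R}_X(\eta) - (X+\eta)\| \leq 2D + \|\eta\| \leq (2D/\delta^2 + 1/\delta)\|\eta\|^2$. Taking $M_1 := \max\{C', 2D/\delta\}$ and $M_2 := \max\{C, 2D/\delta^2 + 1/\delta\}$ then covers both regimes and establishes \eqref{R-1} and \eqref{R-2}.

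The step I expect to be the main obstacle is establishing \emph{uniformity} of the constants over the non-compact total space $T\mathcal{M}$: the Taylor estimates are intrinsically local and a priori $X$-dependent, while the tangent fibers are unbounded, so one cannot simply take a single supremum of derivatives over all of $T\mathcal{M}$. The two-regime split resolves this—compactness of $\mathcal{M}$ (through its boundedness in $\mathbb{R}^n$) absorbs the large-$\eta$ behavior, and compactness of the $\delta$-disk bundle makes the local second-derivative bound uniform—so that finitely many suprema over compact sets suffice.
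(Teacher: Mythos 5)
Your argument is correct. The paper does not prove this lemma itself but only cites it to the reference \cite{BoumalNicolas}, and your two-regime argument---second-order Taylor expansion of $\eta\mapsto\mathcal{R}_X(\eta)-(X+\eta)$ using $\mathcal{R}_X(0_X)=X$ and $D\mathcal{R}_X(0_X)=\mathrm{id}$, with uniformity obtained from compactness of the $\delta$-disk bundle, plus the trivial boundedness estimate for $\|\eta\|>\delta$---is exactly the standard proof of this fact for globally defined smooth retractions on compact embedded submanifolds. The only caveat worth recording is that the global regime implicitly requires $\mathcal{R}$ to be defined and ($C^2$-)smooth on all of $T\mathcal{M}$, which is what the lemma's quantification over all $\eta\in T_X\mathcal{M}$ already presupposes.
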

The subsequent analysis relies only on the local properties of retractions in local neighborhood stated in Lemma~\ref{lem-r}. Hence, all results remain valid for locally defined retractions, provided that the iterates stay within their domains of definition.

We next define the average for manifold-valued variables. For $X_1,\dots,X_N \in \mathcal{M}$, denote the Euclidean average by $\bar X:=\frac{1}{N}\sum_{i=1}^N X_i$ and $\boldsymbol{\bar X}:=(\bm{1}_N\otimes I_n)\bar X$. Because $\mathcal{M}$ might be nonconvex, $\bar X$ need not lie on the manifold. We therefore employ the induced arithmetic mean (IAM) \cite{doi:10.1137/060673400}, defined as
\begin{equation}\label{iam}
\hat X \in \mathcal{X}_{IAM}:=\arg \min_{Y \in \mathcal{M}} \sum\nolimits_{i=1}^N \|Y - X_i\|^2 .
\end{equation}



\subsection{Stationarity Measurements for Weakly-convex Functions}

First, we give the definition of weakly-convex functions. 
\begin{definition}
	A function $\phi: \mathcal{M} \mapsto \mathbb{R}$ is called a $\rho$-weakly-convex function if the assignment $X\mapsto \phi(X)+\frac{\rho}{2}\|X\|^2$ is a convex function in the ambient Euclidean space.
\end{definition}

If $\phi$ is differentiable on $\mathcal{M}$, then the only tangent vector that satisfies $\langle \text{grad}\  \phi(X), v \rangle = D \phi(X)[v]$ ($\forall v \in T_X \mathcal{M}$) is called a Riemannian gradient $\text{grad}\  \phi(X)$, where $D \phi(X)[v] = \frac{d \phi(\gamma(t))}{dt} \big|_{t=0}$ such that $\dot \gamma(0) = v$. For embedded submanifolds, we have an equivalent form of the Riemannian gradient \cite{AbsilMahonySepulchre+2008}: \begin{equation}\label{rg}
\text{grad}\ \phi(X) = \mathcal{P}_{T_X \mathcal{M}}(\nabla \phi(X)),
\end{equation}where $\mathcal{P}_{T_X\mathcal{M}}$ denotes the orthogonal projection onto the tangent space $T_X\mathcal{M}$.

When the function is nonsmooth, we introduce the concept of Riemannian subdifferential and subgradient. 

If $\phi: \mathcal{M} \mapsto \mathbb{R}$ is Lipschitz continuous in the ambient space, we adopt the notion of Riemannian generalized subdifferential from \cite{doi:10.1137/16M1108145}. For any $X \in \mathcal{M}$, let $\hat{\phi}_X:=\phi \circ \mathcal{R}_X$. Since $\hat{\phi}_X$ is Lipschitz continuous on the Hilbert space $T_X\mathcal{M}$, the Clarke generalized directional derivative of $\phi$ at $X$ along $v \in T_X\mathcal{M}$ is given by
$\phi^\circ(X;v)
:=\hat{\phi}_X^\circ(0_X;v)
=\limsup{\substack{\zeta\to 0_X\ t\downarrow 0}}
\frac{\hat{\phi}_X(\zeta+tv)-\hat{\phi}_X(0_X)}{t}.$
The Riemannian subdifferential $\partial_R \phi(X)$ is defined as $\partial_R \phi(X) = \partial \hat{\phi}_X(0_X)$, where $\partial \hat{\phi}_X(0_X):=\{g \in T_X\mathcal{M}| \langle g, v \rangle \leq \hat{\phi}_X^\circ (0_X;v)~\text{for all}~v \in T_X \mathcal{M}\}$. Any element of $\partial_R \phi(X)$ is called a Riemannian subgradient of $\phi$ at X. 

If $\phi$ is weakly-convex, then $\psi(X) = \phi(X)+\frac{\rho}{2}\|X\|^2$ is convex, and the Euclidean subdifferential of $\phi$ can be expressed as $\partial \phi(X) =\partial \psi(X)- \rho X$. Combining the characterization of weakly-convex functions in \cite[Prop.~4.5]{doi:10.1287/moor.8.2.231} with \cite[Th.~5.1]{whYang}, the Riemannian subdifferential of $\phi$ admits the representation
\begin{equation}\label{subd}
\partial_R \phi(X) = \mathcal{P}_{T_X\mathcal{M}}[\partial \phi(X)],~\forall X \in \mathcal{M}.
\end{equation}
By \cite{doi:10.1137/20M1321000}, $X\in \mathcal{M}$ is called a stationary point of $\phi$ if $0 \in \partial_R \phi(X).$

Next, we derive the stationary measurement of problem \eqref{p1}.  Let $\lambda >0$, we define the Moreau envelope and proximal map for problem \eqref{p1}:
\begin{subequations}\label{Me}
	\begin{align}
	f_\lambda (X):= \min_{Y\in \mathcal{M}}\{f(Y)+\frac{1}{2\lambda}\|Y-X\|^2\},\\
	P_{\lambda f}(X):=\arg \min_{Y \in \mathcal{M}}\{f(Y)+\frac{1}{2\lambda}\|Y - X\|^2\}.
	\end{align}
\end{subequations}
By the first-order optimality condition of \eqref{Me}, we  have $\text{dist}(0, \partial_R f(P_{\lambda f}(X)))\leq \frac{1}{\lambda}\|P_{\lambda f}(X) - X\|$ \cite{wang2023}.
Thus, when $\|P_{\lambda f}(X) - X\|$ is sufficiently small, $X$ is near some point $P_{\lambda f}(X) \in \mathcal{M}$ that is nearly stationary for $f$.
\begin{definition}
	$\bold{X}=[X_1^\top,\dots,X_N^\top]^\top \in \mathcal{M}^N$ is called an $\epsilon$-nearly stationary point of problem \eqref{p1} if it satisfies $\frac{1}{\lambda^2}\|P_{\lambda f}(\hat{X}) - \hat{X}\|^2 \leq \epsilon,~~\frac{1}{N}\|\bold{X} - \bold{\hat X}\|^2 \leq \epsilon.$
\end{definition}

\section{Distributed Riemannian Stochastic Proximal Algorithm}\label{sec-3}
In this section, we reformulate problem \eqref{p1} and develop a distributed Riemannian stochastic proximal framework. The distributed stochastic subgradient, proximal-point, and proximal linear methods are then derived as special instances of the proposed framework.
\subsection{Problem Reformulation}
Let $X_i \in \mathcal{M}$ be a local copy of the optimization variable of each agent $i$. Each agent can only have access to its local information but interacts with the direct neighbors through a connected graph $\mathcal{G}=(\mathcal{N},\mathcal{E})$, where $\mathcal{N} = \{1,\dots,N\}$ and $\mathcal{E} \subset \mathcal{N} \times \mathcal{N}$. We reformulate the optimization problem \eqref{p1} as 
\begin{align}\label{p-2}
\min_{X_i \in \mathcal{M}}&\ \frac{1}{N}\sum_{i=1}^Nf_i(X_i),~s.t.~ X_i=X_j,~\forall (i,j) \in \mathcal{E},
\end{align}

which is equivalent to problem \eqref{p1} \cite{wang2022decentralized}. The local cost function of agent $i$ is given by  $f_i(X):=\mathbb{E}_{\xi_i} \tilde{F}_i(X,\xi_i)$, where $\xi_i$ is a random variable on $(\Omega_i, \mathcal{F}_i, \mathbb{P})$. The dependence on $i$ allows different agents to possess different local data distributions.
Given an underlying communication network $\mathcal{G}=(\mathcal{N},\mathcal{E})$, let $W:=\{w_{ij}\}_{N\times N}$ be the associated  weight matrix satisfying $w_{ij}>0$ if $(i,j)\in \mathcal{E}$ or $i=j$, and $w_{ij}=0$ otherwise.

Subsequently, we make some assumptions on the cost functions which are standard in the literature \cite{pmlr-v139-chen21g, wang2023}, while the assumption of smoothness is relaxed to weakly-convex. 
\begin{assumption}
	\label{assu-3.1}
	The local cost functions $f_i$ are $L$-Lipschitz continuous and $\rho$-weakly-convex on the ambient space of $\mathcal{M}$. For any $i\in \mathcal{N}$, $f_i$ is uniformly bounded from below.
\end{assumption}
As $f$ is denoted by the average of all $f_i$, $f$ is also $L$-Lipschitz continuous and $\rho$-weakly-convex.

The following are assumptions on the communication graph $\mathcal{G}$, which are standard in \cite{pmlr-v139-chen21g, wang2022variance, zhao2024stochastic}.
\begin{assumption}\label{assu-3.2}
	Suppose the graph $\mathcal{G}$ is undirected and connected, and $W$ is a nonnegative matrix satisfying $W=W^\top$ and $W \bm{1}_N = W^\top \bm{1}_N = \bm{1}_N$.
\end{assumption}
\begin{remark}\label{re-W}
	According to Assumption \ref{assu-3.2}, any power of the matrix $W$, i.e., $W^t,~t \geq 1$, is also symmetric and doubly stochastic. Since the graph is connected, all the eigenvalues of $W$ are in $(-1,1]$, and the second largest singular value $\sigma_2\in [0,1)$ \cite{1406483}, which is also the spectral norm of $W - \frac{1}{N}\bm{1_N} \bm{1_N}^\top$ \cite[Th. 5.1]{doi:10.1137/060678324}.
\end{remark}

\subsection{The Algorithm Framework}
To solve the problem, we adopt the stochastic one-sided model framework of \cite{doi:10.1137/18M1178244, davis2020} in the Riemannian setting. The resulting stochastic models provide local approximations of the expected-value functions $f_i$ at the current iterate; for example, stochastic (sub)gradient methods are induced by linear approximation models.

For each $i \in \mathcal{N}$, fix a probability space $(\Omega_i, \mathcal{F}_i, \mathbb{P})$, and equip $\mathbb{R}^n$ with the Borel $\sigma$-algebra. The model $F_{i,X}(\cdot ,\xi_i)$ is a measurable function $(X, Y, \xi_i) \mapsto F_{i,X}(Y, \xi_i)\in \mathbb{R}$ on $\mathbb{R}^n \times \mathbb{R}^n \times \Omega_i $ and satisfies the following conditions.
\begin{definition}\label{s-model}
	For real constants $\tau,~\rho,~L>0$,  $F_{i,X}(\cdot ,\xi_i)$ satisfies the following conditions.
	\begin{itemize}
		\item[a.] The generated realizations $\xi_i^{[1]}, \xi_i^{[2]}, \dots \sim \mathbb{P}$ are i.i.d. 
		\item[b.] $F_{i,X}(\cdot ,\xi_i)$ satisfies one-sided accuracy, i.e., \begin{align*}
		\mathbb{E}_{\xi_i}[F_{i,X}(X, \xi_i)] &= f_i(X)\\
		\mathbb{E}_{\xi_i}[F_{i,X}(Y, \xi_i)-f_i(Y)] &\leq \frac{\tau}{2}\|Y - X\|^2.
		\end{align*}                    
		\item[c.] For any fixed $X\in \mathcal{M}$, the function $F_{i,X}(\cdot, \xi_i)$ is $\rho$-weakly convex for almost every $\xi_i \sim \mathbb{P}$.
		\item[d.] For any fixed $X\in \mathcal{M}$, $F_{i,X}(\cdot, \xi_i)$ is $L$-Lipschitz continuous for almost every $\xi_i \sim \mathbb{P}$, i.e., $|F_{i,X}(X, \xi_i) - F_{i,X}(Y, \xi_i)|\leq L\|X-Y\|$.
	\end{itemize}
\end{definition}

Let $X_{i,k}$ denote the iterate maintained by agent $i$ at iteration $k$. Given $X_{i,k}$, the search direction $v_{i,k}\in T_{X_{i,k}}\mathcal M$ is obtained by solving
\begin{equation}\label{prox}
v_{i,k} = \arg \min_{v\in T_{X_{i,k}}\mathcal{M}}\{F_{i,X_{i,k}}(X_{i,k}+v, \xi_i) +\frac{\beta_k}{2}\|v\|^2\},
\end{equation} where the parameter $\beta_k$ acts as a searching step size. Since $\mathcal{M}$ is an embedded manifold in $\mathbb{R}^n$, the addition $X_{i,k}+v$ is valid.  In particular, when $\beta_k > \rho$, Definition \ref{s-model} (c) implies that the objective function of \eqref{prox} is $(\beta_k-\rho)$-strongly convex in $v$. Hence, subproblem \eqref{prox} admits a unique optimality.

To enforce consensus on $\mathcal{M}$, we consider the consensus potential $\min_{\mathbf{X} \in \mathcal{M}^N} h_t(\bold{X}):=\frac{1}{2}\sum_{i=1}^Nh_{i,t}(\bold{X})$, where $h_{i,t}(\bold{X}):=\frac{1}{2} \sum_{j=1}^N W_{ij}^t\|X_i-X_j\|^2$ measures the local disagreement of agent $i$. Here, $W_{ij}^t$ represents the ($i,j$)-th element of $W^t$, the $t$th-power of the weight matrix.

Combining the direction in \eqref{prox} with a consensus dynamic gives the update of $X_{i,k}$ as
\begin{equation}\label{e-iter}
X_{i,k+1} = \mathcal{R}_{X_{i,k}}\left(-\alpha \text{grad}~h_{i,t}(\bold{X}_k)+v_{i,k} \right).
\end{equation}

We summarize the procedure in Algorithm \ref{alg:1}.
\begin{algorithm}
	\caption{Distributed Riemannian Stochastic Proximal Gradient Algorithm}
	\label{alg:1}
	\begin{algorithmic}
		\REQUIRE For all $i\in \mathcal{N}$, set $\bold{X}_{0} \in \mathcal{S}$, where the local region $\mathcal{S}$ will be formally defined in \eqref{l_r}. Let the step sizes $\alpha, \beta_k>0$, and $t \geq \left[\log_{\sigma_2}\left(\frac{1}{5 \sqrt{N}}\right)\right]$.
		\FOR {$k=0,1,2,\dots,K$}
		\STATE Step $1$: Proximal step to get the search direction: \eqref{prox};
		\STATE Step $2$: Update the optimal variable $X_{i,k+1}$ by \eqref{e-iter}. 
		\ENDFOR
	\end{algorithmic}
\end{algorithm}

\begin{remark}\label{rem-alg}
	(1). Although stochastic one-sided models have also been studied in Euclidean spaces \cite{doi:10.1137/18M1178244, davis2020}, their extension to Riemannian manifolds is not straightforward. The key distinction is that the optimization subproblem is formulated on the tangent space and produces a search direction $v_{i,k}$ rather than a proximal point in the ambient space, necessitating an additional retraction step to ensure manifold feasibility.
	
	\noindent (2). 
	We next compare Algorithm~\ref{alg:1} with existing distributed Riemannian methods. 
	
	Most algorithms for embedded submanifolds \cite{deng2023decentralized, wang2024proxtrack} employ the projection
	$\mathcal{P}_\mathcal{M}(X) = \arg \min_{Y\in \mathcal{M}}\|X-Y\|$ to enable feasibility. In contrast, Algorithm~\ref{alg:1} is based on manifold retractions. Although orthogonal projections onto $\mathcal{M}$ induce a class of projection-like retractions \cite{absil2012projection}, the proposed framework accommodates a broader family of retractions, offering greater flexibility and potentially lower computational cost. 
	We show the computational cost of projection and retraction on several representative manifolds in Table~\ref{tab:proj_vs_retr_complexity}. This computational gap becomes particularly significant when $n \gg p$, or in large-scale problems where projection steps are performed repeatedly.
	Moreover, projection-based approaches may be computationally demanding since projections generally do not admit closed-form expressions and are not globally unique owing to the nonconvexity of the manifold. 
	
	When $\mathcal{M}$ is the Stiefel manifold and the distribution of each $\xi_i$ is known, the distributed Riemannian subgradient method in \cite{wang2023} can be recovered from Algorithm~\ref{alg:1} by adopting a linear approximation model. Therefore, Algorithm~\ref{alg:1} applies to a broader class of optimization problems while providing a computationally tractable retraction-based framework.
	
	\begin{table}[ht]
		\centering
		\caption{The computational complexity of both projection and retraction \& $\kappa_g$ of representative manifolds.}
		\begin{tabularx}{\columnwidth}{lcXcr}
			\toprule
			Manifold & Operation & Method & Complexity & $\kappa_g$\\
			\midrule
			
			Sphere 
			& Proj. 
			& Normalization $x / \|x\|$ 
			& $\mathcal{O}(n)$ 
			&\multirow{2}{*}{$\frac{1}{r}$}
			\\$\mathbb{S}^{n-1}(r)$ 
			& Retr. 
			& First-order update + normalization 
			& $\mathcal{O}(n)$ 
			&~
			\\
			\midrule
			
			Oblique 
			& Proj. 
			& Column-wise normalization 
			& $\mathcal{O}(np)$ 
			&\multirow{2}{*}{$1$}
			\\manifold 
			& Retr. 
			& Incremental normalization 
			& $\mathcal{O}(np)$ 
			&~
			\\
			\midrule
			
			Stiefel 
			& Proj. 
			& Polar / SVD 
			& $\mathcal{O}(np^2 + p^3)$ 
			&$1$
			\\$\mathrm{St}(n,p)$
			&Retr.
			& QR/Cayley 
			& $\mathcal{O}(np^2)$ 
			\\
			\midrule
			
			Grassmann 
			& Proj. 
			& SVD-based 
			& $\mathcal{O}(np^2 + p^3)$ 
			&\multirow{2}{*}{$1$}
			\\ $\mathrm{Gr}(n,p)$
			& Retr. 
			& QR-based
			& $\mathcal{O}(np^2)$ 
			&~
			\\
			
			\bottomrule
		\end{tabularx}
		\label{tab:proj_vs_retr_complexity}
	\end{table}

\end{remark}

\subsection{Algorithmic Examples}\label{a-ex}
We next derive three distributed algorithms from Algorithm~\ref{alg:1} and state the corresponding assumptions to ensure that Definition~\ref{s-model} is satisfied. Table~\ref{tab2} lists three one-side stochastic models, illustrating how different model choices lead to different algorithmic instances of the proposed framework.

\begin{table}[ht]
	\centering
	\setlength{\abovecaptionskip}{0cm}
	\caption{Examples for stochastic models, where $G_i(X,\xi_i)$ denote the stochastic Riemannian subgradient on tangent space, and $J_i(X, \xi_i) = \nabla c_i(X, \xi_i)$ denote the Jacobian of $c_i$.}
	\renewcommand{\arraystretch}{1.5}
	\label{tab2}
	\begin{tabularx}{\columnwidth}{l X r}
		\toprule[1pt]
		Algorithm & Cost function & Model $F_{i,X}(X+v, \xi_i)$\\ \midrule[1pt] 
		Subgradient& $ \mathbb{E}_{\xi_i}[\tilde{F}_i(X,\xi_i)]$ & $\tilde{F}_i(X, \xi_i)+\langle G_i(X, \xi_i), v\rangle$\\ \hline
		\vspace{1pt} 
		Proximal point& $\mathbb{E}_{\xi_i}[\tilde{F}_i(X,\xi_i)]$ & $\tilde{F}_{i}(X+v, \xi_i)$\\ \hline
		Prox-linear& $\mathbb{E}_{\xi_i}[H(c_i(X,\xi_i),  \xi_i)]$ & $H(c_i(X, \xi_i)+J_i(X,\xi_i)v,\xi_i)$\\
		\bottomrule[1pt]
	\end{tabularx}
\end{table}

{\bf Distributed Riemannian stochastic subgradient:} Let $Y = X+v,~v \in T_X \mathcal{M}$ and set the model as $F_{i,X}(Y , \xi_i) = \tilde{F}_i(X, \xi_i)+\langle G_i(X, \xi_i), Y-X\rangle$,  where $G_i(X,\xi_i)\in T_X \mathcal{M}$ denotes the approximate subgradient sampled by a stochastic oracle. We impose the following assumptions for each agent.
\begin{itemize}
	\item[A1)] The generated realizations $\xi_i^{[1]}, \xi_i^{[2]}, \dots \sim \mathbb{P}$ are i.i.d.;
	\item[A2)] For any $X\in \mathcal{M}$, $\mathbb{E}_{\xi_i}[G_i(X,\xi_i)] \in \partial_R f_i(X)$;
	\item[A3)] For any $X\in \mathcal{M}$, $f_i(X)$ is $\rho$-weakly-convex;
	\item[A4)] The sampled gradients are bounded, i.e., there exists a constant $L\ge 0$ s.t.  $\|G_i(X,\xi_i)\| \le L$.
\end{itemize}
Definition \ref{s-model}(a) and (d) are immediate from (A1) and (A4). Definition \ref{s-model}(c) holds since $F_{i,X}(Y , \xi_i)$ is convex associated to $Y$. By (A2) and (A3), there holds
\begin{align*}
\mathbb{E}_{\xi_i}[F_{i,X}(Y , \xi_i) - f_i(Y)] \leq& f_i(X) - f_i(Y) + \langle g_X, Y-X\rangle \\
\leq& \tfrac{\rho}{2}\|Y-X\|^2,~ g_X \in \partial_R f_i(X),
\end{align*} which implies that Definition \ref{s-model}(b) holds with $\tau = \rho$.  

{\bf Distributed Riemannian stochastic proximal point:} Set $F_{i,X}(Y , \xi_i) = \tilde{F}_i(Y, \xi_i)$, where $Y = X+v,~v \in T_X \mathcal{M}$. Assume that the following properties are satisfied.
\begin{itemize}
	\item[B1)] The generated realizations $\xi_i^{[1]}, \xi_i^{[2]}, \dots \sim \mathbb{P}$ are i.i.d.;
	\item[B2)] For any $Y \in \mathcal{M}$, there holds $\mathbb{E}_{\xi_i}[\tilde{F}_{i}(Y,\xi_i)] = f_i(Y)$;
	\item[B3)] Each function $\tilde{F}_{i}(\cdot , \xi_i)$  is $\rho$-weakly-convex;
	\item[B4)] Each function $\tilde{F}_{i}(\cdot, \xi_i)$ is $L$-Lipschitz continuous.
\end{itemize}
We check the definition of stochastic model, and clearly see that Definition \ref{s-model} (a)-(d) are immediately satisfied with $\tau=0$.

{\bf Distributed Riemannian stochastic proximal linear:} Consider the optimization problem \eqref{p1} with $f_i(X):=E_{\xi_i}[H_i(c_i(X,\xi_i),\xi_i)]$, where $c_i: \mathcal{M} \mapsto \mathbb{R}^m$ is $C^1$-smooth and $H_i:\mathbb{R}^m \mapsto \mathbb{R}$ is convex\footnote{This is a common type of weakly-convex functions \cite[Lem. 4.2]{drusvyatskiy2019}}. By applying the function $H_i$ to a first-order approximation of $c_i$, the proximal linear model is given as 
$F_{i,X}(Y, \xi_i) = H_i(c_i(X, \xi_i)+J_i(X,\xi_i)(Y-X),\xi_i),$ where $Y = X+v,~v \in T_X \mathcal{M}$. Suppose the following assumptions hold.
\begin{itemize}
	\item[C1)] The generated realizations $\xi_i^{[1]}, \xi_i^{[2]}, \dots \sim \mathbb{P}$ are i.i.d.;
	\item[C2)] Each function $H_i(\cdot, \xi_i)$ is $l_1$-Lipschitz;
	\item[C3)] Every $c_i(\cdot, \xi_i)$ are $l_2$-smooth;
	\item[C4)] There exists a constant $A\ge 0$ such that $\|\nabla c_i(X,\xi_i)\| \leq A$ for $\forall i$. 
\end{itemize}
Definition \ref{s-model}(a) and (c) hold trivially with the convexity of $H_i$. By (C2) and (C3), there holds
\begin{align*}
&\mathbb{E}_{\xi_i}[H_i(c_i(X,\xi_i)+J_i(X,\xi_i) (Y-X),\xi_i) - H_i(c_i(Y,\xi_i),\xi_i)] \\
&\leq l_1\mathbb{E}_{\xi_i}[c_i(X,\xi_i) -c_i(Y,\xi_i) + J_i(X,\xi_i) (Y-X)] \\
&\leq\frac{l_1l_2}{2}\|X-Y\|^2,
\end{align*} which implies Definition \ref{s-model}(b) with $\tau = l_1l_2$. Definition \ref{s-model}(d) is derived from (C4) with $L = l_1A$.

\section{Main Results}\label{sec-4}
In this section, we analyze the consensus and convergence behavior of the proposed algorithmic framework and establish its convergence rate. A key ingredient in the analysis is a normal vector inequality involving the geodesic curvature bound, stated in Lemma~\ref{nv}.

\subsection{Key Lemmas}
The convergence analysis of distributed optimization algorithms on compact embedded submanifolds faces two main challenges. First, existing analyses on the Stiefel manifold do not readily extend to general submanifolds, as they rely heavily on geometric properties specific to the Stiefel manifold, including its homogeneous structure \cite{doi:10.1137/060673400} and he Lipschitz-type inequality of the retraction \cite{chen2023local}. Moreover, explicit expressions for both projections and retractions are available on the Stiefel manifold but are generally unavailable on arbitrary embedded submanifolds.
Second, existing works on compact submanifolds \cite{deng2023decentralized, wang2024proxtrack} by treating the manifold as an $R$-proximally smooth subset of the ambient Euclidean space and exploiting properties of the projection operator. Since our algorithm framework is built upon retractions instead of metric projections, the analytical tools developed for projection-based methods are not directly applicable. Moreover, we further explicitly characterize the influence of the geodesic curvature bound on algorithmic convergence.

To overcome these challenges and analyze the impact of Riemannian geometry on the algorithm convergence, we exploit the second fundamental form to establish a normal vector inequality and a local projection-Lipschitz property on $\mathcal{M}$. 

Recall Lemma \ref{gf} given in Section \ref{sec-2}. When the curve $\gamma$ ($\gamma(0) = X$) is a unit-speed geodesic on $\mathcal{M}$, and $\eta: = \dot{\gamma}$ is its velocity vector field, by the Gauss formula we have
\begin{align}\label{gf-geo}
\bar{\nabla}_{\eta} \eta = \nabla_{\eta} \eta + \Pi(\eta, \eta) = \Pi(\eta, \eta).
\end{align}  
Thus, the second fundamental form $\Pi(\eta, \eta)$ represents the Euclidean acceleration of the geodesic $\gamma$ at $X$ \cite[Prop.~8.10]{lee2018introduction}, characterizing how $\gamma$ bends within the embedding space. 
	
	According to Definition \ref{g-c}, we refer to the length $\|\Pi(\eta, \eta)\|$ as the extrinsic geodesic curvature of $\gamma$ and assume it is uniformly bounded.

\begin{assumption}\label{k}
	The geodesic curvature along a geodesic $\gamma$ has an upper bound $\kappa_g\ge 0$, i.e., for any $X \in \mathcal{M}$, $\gamma(0) = X$ and $\eta = \dot \gamma$, it holds
		$\|\Pi(\eta, \eta)\| \leq \kappa_g \|\eta\|^2$, where $\|\eta\|: =  \sqrt{\langle \eta(t), \eta(t) \rangle}$ denotes the norm of vector field $\eta$ along a curve. 
\end{assumption}  
\begin{remark}\label{r-3}
	(1). The constant $\kappa_g$ can be interpreted as the operator norm of the second fundamental form. Since $\Pi$ is bilinear and continuous, $\kappa_g = \|\Pi\|_{op} := \sup_{X\in \mathcal{M}, \eta \in T_X \mathcal{M}}\max_{\|\eta\| \le 1} \|\Pi(\eta, \eta)\|$ is finite by the compactness of $\mathcal{M}$. Hence, Assumption~\ref{k} is mild and automatically satisfied for compact embedded submanifolds.
	
	\noindent(2). When $\mathcal{M}$ is an embedded hypersurface of $\mathbb{R}^n$, the second fundamental form can be computed using \cite[Prop.~8.23]{lee2018introduction}. 
	
	\noindent(3). For more general submanifolds, we can compute the second fundamental form by the Weingarten map $\mathfrak{W}_X(\cdot, \cdot)$ that maps $w \in N_X \mathcal{M}$ and $v \in T_X \mathcal{M}$ to a tangent vector and satisfies $\langle w, \Pi(v,u) \rangle = \langle \mathfrak{W}_X(w, v), u \rangle$, $u \in T_X \mathcal{M}$. 
		According to the Weingarten equation \cite[Prop. 8.4]{lee2018introduction},
		$\mathfrak{W}_X(w, v) = - \mathcal{P}_{T_X \mathcal{M}}(\bar \nabla_v \omega)$, 
		where $\omega$ is any local extension of $w$ to a normal vector field. 
		
		Therefore, for many commonly used manifolds, $\kappa_g$ can be explicitly estimated once the second fundamental form is available, and we give some examples in Table~\ref{tab:proj_vs_retr_complexity}.
	
\end{remark}

We next establish the key inequalities in Lemma~\ref{nv}. While these inequalities resemble \cite[Eq.~(2.2)]{deng2023decentralized}, which originates from \cite{00823375}, our derivation is based on the geometry of embedded Riemannian manifolds and explicitly incorporates the geodesic curvature through $\kappa_g$.
\begin{lemma}[Normal vector inequality]\label{nv}
	Suppose Assumption \ref{k} holds. For any $X,Y \in \mathcal{M}$ and a normal vector $w \in N_X \mathcal{M}$, there holds \begin{equation}\label{r_cur}
	\langle w, Y-X \rangle \leq  \tfrac{\kappa_g\|w\| }{2} d^2_\mathcal{M}(X,Y) ;
	\end{equation}
	Moreover, $D = \frac{2-\sqrt{2}}{\kappa_g}$ is a positive constant. For every fixed $X \in \mathcal{M}$ and $\Omega = \text{Exp}_X(B(0_X,D))$, where $B(0_X,D)$ denotes a ball with $0_X$ as its origin and $D$ as its radius, if we have $Y \in \Omega$, then there holds
	\begin{equation}\label{cur}
	\langle w, Y-X \rangle \leq \kappa_g \|w\|  \|Y-X\|^2.
	\end{equation}
\end{lemma}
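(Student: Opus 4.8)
The plan is to prove both inequalities by integrating along the minimizing geodesic joining $X$ and $Y$ and invoking the geodesic form of the Gauss formula \eqref{gf-geo}. Let $\eta = \mathrm{Exp}_X^{-1}(Y) \in T_X\mathcal{M}$ and set $\gamma(t) = \mathrm{Exp}_X(t\eta)$ for $t \in [0,1]$, so that $\gamma(0) = X$, $\gamma(1) = Y$, and $\gamma$ is a constant-speed geodesic with $\|\dot\gamma(t)\| \equiv \|\eta\| = d_\mathcal{M}(X,Y)$. Since $\gamma$ is a geodesic, its intrinsic acceleration vanishes, $\mathcal{D}_{\dot\gamma}\dot\gamma = 0$, and Lemma \ref{gf} specializes to \eqref{gf-geo}, giving the ambient second derivative $\ddot\gamma(t) = \Pi(\dot\gamma(t), \dot\gamma(t))$.

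For \eqref{r_cur}, I would introduce the scalar function $g(t) := \langle w, \gamma(t) - X\rangle$. Then $g(0) = 0$, and $g'(t) = \langle w, \dot\gamma(t)\rangle$, so $g'(0) = \langle w, \eta\rangle = 0$ because $w \in N_X\mathcal{M}$ is orthogonal to the tangent vector $\eta \in T_X\mathcal{M}$. Differentiating once more and using the identity above together with Cauchy--Schwarz and Assumption \ref{k} yields the pointwise bound
\begin{equation*}
g''(t) = \langle w, \Pi(\dot\gamma(t), \dot\gamma(t))\rangle \le \|w\|\,\kappa_g\,\|\dot\gamma(t)\|^2 = \kappa_g \|w\|\, d_\mathcal{M}(X,Y)^2.
\end{equation*}
The Taylor expansion with integral remainder $g(1) = g(0) + g'(0) + \int_0^1 (1-t) g''(t)\,dt$ then gives $\langle w, Y - X\rangle = g(1) \le \tfrac{1}{2}\kappa_g\|w\|\, d_\mathcal{M}(X,Y)^2$, which is \eqref{r_cur}.

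For \eqref{cur}, the idea is to upgrade \eqref{r_cur} by comparing the Riemannian and Euclidean distances on the region $\Omega$. Writing $Y - X = \int_0^1 \dot\gamma(t)\,dt$ and expanding $\dot\gamma(t) = \eta + \int_0^t \Pi(\dot\gamma(s),\dot\gamma(s))\,ds$, I would obtain $Y - X = \eta + R$ with a remainder bounded by $\|R\| \le \int_0^1\!\int_0^t \kappa_g\, d_\mathcal{M}(X,Y)^2\,ds\,dt = \tfrac{1}{2}\kappa_g\, d_\mathcal{M}(X,Y)^2$. Hence $\|Y - X\| \ge \|\eta\| - \|R\| \ge d_\mathcal{M}(X,Y)\big(1 - \tfrac{1}{2}\kappa_g d_\mathcal{M}(X,Y)\big)$. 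Since $Y \in \Omega$ forces $d_\mathcal{M}(X,Y) = \|\eta\| < D = \tfrac{2-\sqrt2}{\kappa_g}$, the factor satisfies $1 - \tfrac12\kappa_g d_\mathcal{M}(X,Y) > \tfrac{1}{\sqrt2}$, whence $d_\mathcal{M}(X,Y) < \sqrt2\,\|Y - X\|$, i.e. $d_\mathcal{M}(X,Y)^2 < 2\|Y-X\|^2$. Substituting this into \eqref{r_cur} produces exactly $\langle w, Y - X\rangle \le \kappa_g\|w\|\,\|Y-X\|^2$.

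The routine pieces are the two integrations; the step that needs the most care is the passage from the Riemannian to the Euclidean distance in the second part, where the specific constant $D = (2-\sqrt2)/\kappa_g$ is engineered precisely so that the contraction factor clears the threshold $1/\sqrt2$. A secondary point to make rigorous is that $\gamma(t) = \mathrm{Exp}_X(t\eta)$ is a length-minimizing geodesic realizing $d_\mathcal{M}(X,Y) = \|\eta\|$, which holds because $\Omega$ lies within the injectivity radius; this guarantees both the constant-speed property and the identity $\|\dot\gamma(t)\| = d_\mathcal{M}(X,Y)$ used throughout.
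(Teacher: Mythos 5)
Your proposal is correct and follows essentially the same route as the paper: both arguments integrate the Gauss formula \eqref{gf-geo} along the minimizing geodesic from $X$ to $Y$, use $\langle w,\dot\gamma(0)\rangle=0$ to kill the first-order term (your Taylor remainder $\int_0^1(1-t)g''(t)\,dt$ is just a scalar repackaging of the paper's double integral bound on $\|Y-X-\eta(0)\|$), and then upgrade to \eqref{cur} via the identical comparison $d_\mathcal{M}(X,Y)\le\sqrt2\,\|Y-X\|$ forced by $D=(2-\sqrt2)/\kappa_g$. Your explicit remark that the geodesic must be length-minimizing so that $\|\eta\|=d_\mathcal{M}(X,Y)$ is a small point of rigor the paper leaves implicit.
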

\begin{proof}
	For $X,~Y \in \mathcal{M}$, there exists a  geodesic $\gamma:[0,1] \mapsto\mathcal{M}$ connecting $X$ and $Y$. We denote the tangent vector along $\gamma(t)$ as $\eta(t): = \dot \gamma(t)$. From the classical calculus in Euclidean spaces, we have
	$      Y-X = \int_0^1 \eta(t) dt$, which further gives
	\begin{align*}
	Y-X-\eta(0)  = &   \int_0^1 \left( \eta(t) - \eta(0) \right)dt= \int_0^1 \int_0^t \nabla_{\eta} \eta(s) ds dt,
	\end{align*}
	where the last equality follows from the fundamental theorem of calculus that $\eta(t) - \eta(0)  =  \int_0^t \nabla_{\eta} \eta(s) ds.$
	
	Since $\eta$ is a tangent field of geodesic $\gamma$, by \eqref{gf-geo} we have $Y-X-\eta(0) =  \int_0^1 \int_0^t \Pi(\eta(s),\eta(s)) ds dt.$ Taking norm of both sides implies
	\begin{align}\label{dist}
	&\| Y-X-\eta(0) \| =   \left\| \int_0^1\int_0^t \Pi(\eta(s),\eta(s)) ds dt \right \| \notag\\
	\overset{(a)}{\leq}& \int_0^1 \int_0^t \|\Pi(\eta(s),\eta(s))\| ds dt\overset{(b)}{\leq}  \kappa_g \int_0^1 \int_0^t \|\eta(s)\|^2 ds dt \notag\\
	=   &  \kappa_g \|\eta(0)\|^2 \int_0^1 \int_0^t ds dt = \frac{\kappa_g}{2} \|\eta(0)\|^2,
	\end{align}
	where (a) holds since $\|\eta(t)\|$ is a constant when $\gamma$ is a geodesic 
		\cite[Corl. 5.6]{lee2018introduction}, and (b) is derived by Assumption \ref{k}.
	
	Since $w\in N_X \mathcal{M}$ and $\eta(0)\in T_X \mathcal{M}$, together with \eqref{dist} we can derive that
		\begin{align}\label{eq13}
		\langle \tfrac{w}{\|w\|}, Y-X\rangle =   & \langle \tfrac{w}{\|w\|}, Y-X-\eta(0)\rangle 
		\le   \|Y-X-\eta(0)\|  \notag\\
		\le &  \tfrac{\kappa_g}{2}\|\eta(0)\|^2 =  \tfrac{\kappa_g}{2}d^2_{\mathcal M}(X,Y).
		\end{align}
	If we further have $Y \in \Omega$, \eqref{dist} yields
	\begin{align}\label{egeqr}
	\| Y-X\| \ge \|\eta(0)\| - \frac{\kappa_g}{2}\|\eta(0)\|^2  \ge \frac{1}{\sqrt{2}}\|\eta(0)\|,
	\end{align}
	where the last inequality is due to $\|\eta(0)\| = d_{\mathcal M}(X,Y) \le \frac{2-\sqrt{2}}{\kappa_g}$. 
	Plugging this into \eqref{eq13} implies \eqref{cur}.
\end{proof}

\begin{remark}\label{r-4}
	Lemma~\ref{nv} shows that, for two points $X,Y\in\mathcal{M}$, the displacement of $Y$ from $X$ in a normal direction can be bounded by their Riemannian (or Euclidean) distance, scaled by a curvature-dependent constant.
		It relates the local proximality of $\mathcal M$ to the geodesic curvature bound $\kappa_g$. While the uniform normal inequality is typically expressed in terms of the proximal smoothness radius $R$ \cite{deng2023decentralized, wang2024proxtrack}, which is a global constant and is often difficult to compute, $\kappa_g$ explicitly captures the local geometry of the embedding through the second fundamental form and can be quantified for many manifolds of practical interest (see Remark~\ref{r-3} and Table~\ref{tab:proj_vs_retr_complexity}). 
\end{remark}

For any $X\in \mathcal{M}$, $x := X+ d(x,\mathcal{M}) \frac{w}{\|w\|}$ denotes the point deviating $\mathcal{M}$ along the normal direction $w$, where $d(x,\mathcal{M})$ representing the deviation distance. If there satisfies $d(x,\mathcal{M}) < \frac{1}{2\kappa_g}$, then by \eqref{cur} we have $\langle \frac{w}{\|w\|}, Y-X\rangle \le \frac{1}{2d(x, \mathcal{M})}\|X-Y\|^2$, which further implies $\langle 2d(x, \mathcal{M}) \frac{w}{\|w\|}, Y-X\rangle \le \|X-Y\|^2$. By adding $d^2(x, \mathcal{M})$ on both sides and rearranging the terms, it holds that
	\[\|x-Y\| = \left\|X+d(x,\mathcal{M}) \tfrac{w}{\|w\|} - Y\right\| \geq d(x,\mathcal{M}),~\forall Y\in \Omega,\] 
	where $\Omega$ is defined in Lemma \ref{nv}. It indicates that the distance from $x$ to any point in $\Omega$ is greater than $d(x, \mathcal{M})$, that is, $X$ is the unique orthogonal projection of $x$ in $\Omega$. For simplicity, we denote by $\mathcal{P}|_{\Omega}(x)$ the projection of $x$ onto $\mathcal{M}$ restricted in $\Omega$. The geometry expression is shown in Fig.\ref{local projecion}.
\begin{figure}[h]
	\centering
	\setlength{\belowcaptionskip}{-0.2cm}
	\includegraphics[width=0.7\linewidth]{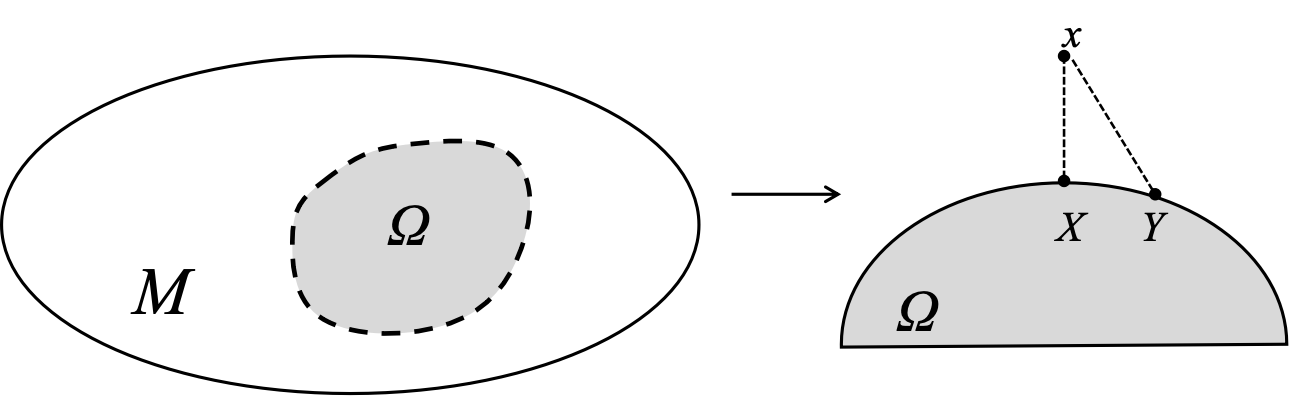}
	\caption{The geometry expression of Lemma \ref{nv}.}
	\label{local projecion}
\end{figure}

Under sufficient conditions, the Euclidean distance between two points in the ambient space can give bound to the distance between their local projections onto the manifold. The proof is provided in Appendix A.
\begin{lemma}\label{lem-lp}
	Suppose Assumption \ref{k} holds. For any given $x,y \in \bar{U}(d)$, where $\bar{U}(d):=\{x\in \mathbb{R}^n: d(x, \mathcal{M}) \leq d\}$, and $d \in (0, \frac{1}{2\kappa_g})$, it holds
	\begin{equation}\label{lp}
	d_\mathcal{M}(\mathcal{P}|_{\Omega}(x), \mathcal{P}|_{\Omega}(y)) \leq \tfrac{2}{1-2\kappa_g d}\|x-y\| .
	\end{equation}
\end{lemma}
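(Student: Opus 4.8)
The plan is to prove the estimate in two stages: first establish a Lipschitz bound in the ambient Euclidean norm, namely $\|X-Y\| \le \frac{1}{1-2\kappa_g d}\|x-y\|$ with $X := \mathcal{P}|_{\Omega}(x)$ and $Y := \mathcal{P}|_{\Omega}(y)$, and then upgrade it to the claimed Riemannian bound using the relation between $d_\mathcal{M}$ and the Euclidean norm that is already extracted inside the proof of Lemma \ref{nv}. The starting point is the first-order optimality condition for the local projection, which forces the residuals to be normal: $x-X \in N_X\mathcal{M}$ and $y-Y \in N_Y\mathcal{M}$, with $\|x-X\| = d(x,\mathcal{M}) \le d$ and $\|y-Y\| = d(y,\mathcal{M}) \le d$ because $x,y \in \bar U(d)$.

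The core computation I would carry out is to expand $\|X-Y\|^2 = \langle X-Y, X-Y\rangle$ along the decomposition $X-Y = (X-x)+(x-y)+(y-Y)$, producing three inner products. The first, $\langle X-Y, X-x\rangle = \langle x-X, Y-X\rangle$, is a normal-vector pairing at $X$, so the normal vector inequality \eqref{cur} applied with $w = x-X \in N_X\mathcal{M}$ gives $\langle x-X, Y-X\rangle \le \kappa_g\|x-X\|\,\|Y-X\|^2 \le \kappa_g d\,\|X-Y\|^2$. Symmetrically, the third term $\langle X-Y, y-Y\rangle = \langle y-Y, X-Y\rangle$ is bounded by $\kappa_g d\,\|X-Y\|^2$ using \eqref{cur} at the base point $Y$ with $w = y-Y \in N_Y\mathcal{M}$. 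The middle term is handled by Cauchy--Schwarz, $\langle X-Y, x-y\rangle \le \|X-Y\|\,\|x-y\|$. Collecting these bounds yields $\|X-Y\|^2 \le 2\kappa_g d\,\|X-Y\|^2 + \|X-Y\|\,\|x-y\|$, and since $d < \frac{1}{2\kappa_g}$ makes $1-2\kappa_g d$ strictly positive, dividing through gives the Euclidean estimate $\|X-Y\| \le \frac{1}{1-2\kappa_g d}\|x-y\|$.

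To finish, I would invoke the inequality $\|X-Y\| \ge \frac{1}{\sqrt 2}\,d_\mathcal{M}(X,Y)$ that was derived en route to Lemma \ref{nv} (valid once $Y$ lies in the geodesic ball of radius $D$ around $X$), which converts the Euclidean bound into $d_\mathcal{M}(X,Y) \le \sqrt 2\,\|X-Y\| \le \frac{\sqrt 2}{1-2\kappa_g d}\|x-y\| \le \frac{2}{1-2\kappa_g d}\|x-y\|$, recovering \eqref{lp} with room to spare in the constant.

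The step I expect to be the main obstacle is not the inequality manipulation but the bookkeeping that legitimizes every use of \eqref{cur}: both normal-vector pairings and the $d_\mathcal{M}$-to-Euclidean conversion require $Y$ to sit inside $\Omega = \mathrm{Exp}_X(B(0_X,D))$ with $D = \frac{2-\sqrt2}{\kappa_g}$ (and $X$ inside the corresponding neighborhood of $Y$), i.e. $d_\mathcal{M}(X,Y) \le D$, which is mildly circular since mutual closeness of $X$ and $Y$ is part of what the bound asserts. I would discharge this using $d < \frac{1}{2\kappa_g}$, which renders each local projection unique by the discussion following Lemma \ref{nv}, together with a continuity/connectedness argument along the segment $[x,y] \subset \bar U(d)$: as a point $z$ traverses this segment, $\mathcal{P}|_{\Omega}(z)$ varies continuously and cannot escape the common geodesic neighborhood, so $X$ and $Y$ stay mutually admissible for \eqref{cur}. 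Making this neighborhood argument airtight is the delicate part of the proof.
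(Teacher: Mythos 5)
Your proposal is correct and follows essentially the same route as the paper's Appendix~A proof: both exploit the first-order optimality of the local projections to produce two normal-vector pairings controlled by Lemma~\ref{nv}, sum them, apply Cauchy--Schwarz to the $x-y$ term, and use the comparison $\|X-Y\|\ge \tfrac{1}{\sqrt2}\,d_\mathcal{M}(X,Y)$ to pass between Euclidean and Riemannian distance (the paper invokes the $d_\mathcal{M}$-form \eqref{r_cur} directly and converts in the middle, whereas you use \eqref{cur} and convert at the end, which only changes the constant slightly). Your closing concern about verifying $d_\mathcal{M}(X,Y)\le D$ before invoking \eqref{cur} is legitimate but applies equally to the paper's own argument, which needs the same containment to justify $\|X-Y\|^2\ge\tfrac12 d_\mathcal{M}(X,Y)^2$ and leaves it implicit in the restriction to $\mathcal{P}|_{\Omega}$.
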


Let $\mathcal{X}_{IAM}$ denote the set of IAM defined in \eqref{iam}. The following proposition gives the relationship between $\mathcal{X}_{IAM}$ and $\mathcal{P}_{\mathcal{M}}(\bar X)$ within a neighborhood. 
\begin{proposition}\label{proj}
	For a series of points $X_1, \dots , X_N \in \mathcal{M}$, if their Euclidean average satisfies $d(\bar X, \mathcal{M}) <\frac{1}{2\kappa_g}$, then it holds $\mathcal{P}|_{\Omega}(\bar X) = \hat X,$ 
	where $\hat{X}$ denote the IAM within the $\Omega$-neighborhood defined as $\Omega  = \text{Exp}_{\hat X}(B(0_{\hat X}, D))$.
\end{proposition}
	\begin{proof}
		According to Lemma \ref{nv}, since $d(\bar X, \mathcal{M}) <\frac{1}{2\kappa_g}$, $\mathcal{P}|_{\Omega}(\bar X)$ is unique. It holds, 
		\begin{align*}
		\mathcal{P}|_{\Omega}(\bar X) =& \arg \min_{Y\in \Omega}\|Y - \bar{X}\| = \arg \min_{Y\in \Omega}\|Y - \bar{X}\|^2\\
		= & \arg \min_{Y\in \Omega} (\|Y\|^2 - 2\langle Y, \bar X \rangle).
		\end{align*}
		For any fixed $X_1, \dots , X_N \in \mathcal{M}$, by removing the constant terms, we have 
		$\hat X= \arg \min_{Y \in \Omega} \sum_{i=1}^N \|Y - X_i\|^2 = \arg \min_{Y \in \Omega} (\|Y\|^2 - 2\langle Y, \bar X \rangle).$
		
		Thus, within the neighborhood $\Omega$, we have $\hat X = \mathcal{P}|_{\Omega}(\bar X)$.
	\end{proof}
	
	Based on Lemma \ref{lem-lp} and Proposition \ref{proj}, the relationship of distance between IAMs and the corresponding Euclidean means is given in the next lemma.
	\begin{corollary}\label{e-r ave}
		Let $\hat X,\hat Y$ denote the IAM and $\bar X,\bar Y$ denote the Euclidean average.  If $\bar X,~\bar{Y} \in \bar{U}(\frac{1}{6\kappa_g})$, then $\|\hat Y - \hat X\| \leq d_\mathcal{M}(\hat{X}, \hat{Y}) \leq 3 \|\bar Y-\bar X\|.$
	\end{corollary}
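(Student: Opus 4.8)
The plan is to deduce both inequalities by assembling the two preceding results, handling the lower and upper bounds separately. For the lower bound $\|\hat Y - \hat X\| \le d_\mathcal{M}(\hat X, \hat Y)$, I would invoke the elementary comparison between the chordal (Euclidean) distance and the intrinsic (Riemannian) distance on an embedded submanifold: a minimizing geodesic $\gamma$ joining $\hat X$ and $\hat Y$ has length exactly $d_\mathcal{M}(\hat X,\hat Y)$, while the straight segment in $\mathbb{R}^n$ between the endpoints is the shortest Euclidean path, so that $\|\hat Y-\hat X\|\le L(\gamma)=d_\mathcal{M}(\hat X,\hat Y)$. This uses only the existence of a minimizing geodesic, which follows from the compactness of $\mathcal{M}$.

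For the upper bound, I would first check that the hypotheses of Proposition \ref{proj} hold for both configurations: since $\bar X,\bar Y\in\bar U(\tfrac{1}{6\kappa_g})$ we have $d(\bar X,\mathcal{M}),\,d(\bar Y,\mathcal{M})\le \tfrac{1}{6\kappa_g}<\tfrac{1}{2\kappa_g}$, so Proposition \ref{proj} identifies the induced arithmetic means with the local projections, namely $\hat X=\mathcal{P}|_{\Omega}(\bar X)$ and $\hat Y=\mathcal{P}|_{\Omega}(\bar Y)$. I would then apply Lemma \ref{lem-lp} with $d=\tfrac{1}{6\kappa_g}\in(0,\tfrac{1}{2\kappa_g})$, which yields
\[
d_\mathcal{M}(\hat X,\hat Y)=d_\mathcal{M}\bigl(\mathcal{P}|_{\Omega}(\bar X),\mathcal{P}|_{\Omega}(\bar Y)\bigr)\le \frac{2}{1-2\kappa_g\cdot\frac{1}{6\kappa_g}}\,\|\bar X-\bar Y\|.
\]
The constant simplifies as $\tfrac{2}{1-1/3}=3$, and this is precisely where the particular radius $\tfrac{1}{6\kappa_g}$ in the hypothesis is calibrated to produce the clean factor $3$; chaining this with the lower bound gives the stated chain of inequalities.

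The only genuine subtlety, and the step I would be most careful about, is ensuring that the two local projections $\mathcal{P}|_{\Omega}(\bar X)$ and $\mathcal{P}|_{\Omega}(\bar Y)$ are taken with respect to compatible $\Omega$-neighborhoods so that Lemma \ref{lem-lp} is literally applicable. One must verify that the Lipschitz estimate there is stated for points lying in a common tube $\bar U(d)$ and that both $\bar X$ and $\bar Y$ sit inside it, which is exactly the standing hypothesis $\bar X,\bar Y\in\bar U(\tfrac{1}{6\kappa_g})$. Everything else is bookkeeping: confirming that $\tfrac{1}{6\kappa_g}$ lies in the admissible range so that both the uniqueness of the projection (from Proposition \ref{proj}) and the Lipschitz bound (from Lemma \ref{lem-lp}) hold, and no new estimate beyond these two results is required.
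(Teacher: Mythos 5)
Your proposal is correct and follows exactly the route the paper intends: the corollary is stated as an immediate consequence of Proposition \ref{proj} (identifying the IAMs with the local projections of $\bar X,\bar Y$) and Lemma \ref{lem-lp} applied with $d=\tfrac{1}{6\kappa_g}$, which yields the constant $\tfrac{2}{1-1/3}=3$, while the lower bound is the standard chordal-versus-geodesic comparison. Your remark about the compatibility of the $\Omega$-neighborhoods is a reasonable point of care that the paper itself leaves implicit.
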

	
	\subsection{Consensus Analysis}
	In this subsection, we establish a local linear convergence result for the consensus error. We begin by introducing the corresponding local region. Assuming that $\delta_1 \leq \frac{1}{20\kappa_g}$, $\delta_2 \leq \frac{\delta_1}{5}$ and recalling $D =  \frac{2-\sqrt{2}}{\kappa_g}$ defined in Lemma \ref{nv}, we denote
	\begin{align}\label{l_r}
	\mathcal{S}&:=\mathcal{S}_1 \bigcap\mathcal{S}_2 \bigcap \mathcal{S}_3,\notag\\
	\mathcal{S}_1&:=\{\bold{X} \in \mathcal{M}^N: \max_{i}\|X_i- \hat{X}\| \leq \delta_1\},\\
	\mathcal{S}_2&:=\{\bold{X}\in \mathcal{M}^N:\|\bold{X}-\hat {\bold{X}}\|^2 \leq N \delta_2^2\},\notag\\
	\mathcal{S}_3&:=\{\bold{X} \in \mathcal{M}^N:\max_{i}d_{\mathcal{M}}(X_i, \hat{X})\leq D\}.\notag
	\end{align}
	
	Building upon Lemma~\ref{nv}, we extend several useful inequalities established for the Stiefel manifold in \cite{chen2023local, wang2023} to general embedded submanifolds. The corresponding inequalities are stated in Lemmas~\ref{ave}--\ref{b-v}, and their proofs are deferred to the Supplementary of \cite{zhao2025distributed} due to space limitations.
	
	First, we show that the distance between $\bar X$ and $\hat X$ can be  bounded by the consensus error scaled with $\kappa_g$. 
	\begin{lemma}\label{ave}
		Suppose Assumption \ref{k} holds. If for any $i \in \mathcal{N}$, $d_\mathcal{M} (X_i, \hat{X}) \leq D$ where $D$ is defined in Lemma \ref{nv}, then there holds$\|\bar X - \hat X\| \leq \frac{\kappa_g}{N} \|\bm{X}-\hat{\bm{X}}\|^2.$
	\end{lemma}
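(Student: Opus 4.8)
The plan is to combine the first-order optimality characterization of the induced arithmetic mean with the normal vector inequality \eqref{cur} of Lemma \ref{nv}. Setting $w := \hat X - \bar X$, I would first recall (as already used in the proof of Proposition \ref{proj}) that the first-order optimality condition for the IAM problem \eqref{iam} forces $w \in N_{\hat X}\mathcal{M}$; here I read $\bar X = \frac{1}{N}\sum_{i=1}^N X_i$ as the Euclidean average. If $w = 0$ the bound is immediate, so I may assume $\|w\| > 0$.

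Next I would expand $\|w\|^2 = \langle w, \hat X - \bar X\rangle = \frac{1}{N}\sum_{i=1}^N \langle w, \hat X - X_i\rangle$. For each summand the estimate I want is the two-sided bound
\[
|\langle w, X_i - \hat X\rangle| \le \kappa_g\,\|w\|\,\|X_i - \hat X\|^2,
\]
which I obtain by applying \eqref{cur} (with base point $\hat X$ and second point $X_i$) to the two normal vectors $w$ and $-w$ separately. The hypothesis $d_\mathcal{M}(X_i, \hat X) \le D$ is exactly what guarantees $X_i \in \Omega = \text{Exp}_{\hat X}(B(0_{\hat X}, D))$, so \eqref{cur} is applicable. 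Bounding each term by its absolute value, summing, and then dividing through by $\|w\|$ yields $\|w\| \le \frac{\kappa_g}{N}\sum_{i=1}^N \|X_i - \hat X\|^2 = \frac{\kappa_g}{N}\|\bm{X} - \hat{\bm{X}}\|^2$, since $\|\bm{X} - \hat{\bm{X}}\|^2 = \sum_{i=1}^N \|X_i - \hat X\|^2$. This is precisely the claim, because $\|w\| = \|\bar X - \hat X\|$.

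The hard part will be the sign bookkeeping: inequality \eqref{cur} is one-sided, so to pass from $\langle w, \hat X - X_i\rangle$ (whose sign is a priori indefinite across the sum) to a quantity I can usefully bound, I must upgrade it to the absolute-value estimate by invoking it for both $w$ and $-w$. The two geometric facts that make the argument go through are the optimality-induced normality of $w$ and the membership $X_i \in \Omega$ supplied by the distance hypothesis; everything else is the elementary expansion of the average and a single division by $\|w\|$.
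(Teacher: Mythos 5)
Your proposal is correct: $w=\hat X-\bar X$ lies in $N_{\hat X}\mathcal{M}$ by the first-order optimality of the IAM, the hypothesis $d_\mathcal{M}(X_i,\hat X)\le D$ places each $X_i$ in $\Omega=\mathrm{Exp}_{\hat X}(B(0_{\hat X},D))$, and applying \eqref{cur} to the normal vector $-w$ (the normal space is a linear subspace, so $-w\in N_{\hat X}\mathcal{M}$) gives $\langle w,\hat X-X_i\rangle\le \kappa_g\|w\|\,\|X_i-\hat X\|^2$ for each $i$; averaging and dividing by $\|w\|$ yields exactly the claimed bound, and in fact you only need this one-sided estimate rather than the full absolute-value version. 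The route differs from the paper's in an instructive way: the paper does not invoke \eqref{cur} but instead reruns the Gauss-formula computation from scratch, writing $X_i-\hat X=\int_0^1\eta_i(t)\,dt$ along each geodesic, killing the first-order term via the normal-cone inequality $\langle \eta_i(0),\hat X-\bar X\rangle\le 0$, bounding the remainder by $\tfrac{\kappa_g}{2}\|\eta_i(0)\|^2$ through the second fundamental form, and finally converting $\|\eta_i(0)\|^2\le 2\|X_i-\hat X\|^2$ using the $D$-bound --- which is precisely the content packaged inside the proof of Lemma \ref{nv}. Your version is shorter and makes the logical dependence on Lemma \ref{nv} explicit, at the cost of requiring $X_i\in\Omega$ for every $i$ simultaneously (which the hypothesis supplies); the paper's inlined computation obtains the same constant $\kappa_g/N$ by the identical factor-of-$\sqrt{2}$ conversion, so nothing is lost either way.
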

	This lemma together with the condition $\mathbf{X}_k \in \mathcal{S}_2$ yields the conservative estimate $\|\bar X_k-\hat X_k\| \leq \frac{\kappa_g}{6}$, which guarantees that $\bar X_k$ remains sufficiently close to $\mathcal M$ for the distance-dependent assumptions to hold.
	
	Next, we estimate the Riemannian gradient of the consensus potential $h_t(\bm{X})$ which will be used in the consensus analysis. 
	\begin{lemma}\label{g}
		Suppose Assumption \ref{assu-3.2} and \ref{k} hold. Denote by $L_t:=1-\sigma_N(W^t)$, where $\sigma_N(W^t)$ represents the smallest eigenvalue of $W^t$. Then $L_t\in (0,2]$ and for any $X_i \in \mathcal{S}_3,~\forall i$, there holds
		\begin{itemize}
			\item[(\romannumeral1).]$\|\sum_{i=1}^N \text{grad} \ h_{i,t}(\bold{X})\| \leq 2\kappa_g L_t\|\bold{X}- \bold{\hat X}\|^2$;
			\item[(\romannumeral2).] $\|\text{grad}\ h_t(\bm{X})\| \leq L_t\|\bm{X} - \hat{\bm{X}}\|$;
			\item[(\romannumeral3).] If it holds $\bm{X} \in \mathcal{S}_1$, then $\max_{i\in \mathcal{N}}\|\text{grad}\ h_{i,t}(\bm{X})\| \leq 2 \delta_1$.
		\end{itemize}
	\end{lemma}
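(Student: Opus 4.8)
The plan is to exploit the fact that, because $W^{t}$ is doubly stochastic (Remark~\ref{re-W}), the stacked Euclidean gradient of the consensus potential has a clean Kronecker form. First I would compute $\nabla_{X_i} h_{i,t}(\mathbf{X}) = \sum_{j} W_{ij}^{t}(X_i - X_j) = X_i - \sum_j W_{ij}^t X_j$, check that this coincides with $\nabla_{X_i} h_t(\mathbf{X})$, and assemble the blocks into $\nabla h_t(\mathbf{X}) = \big((I_N - W^{t})\otimes I_n\big)\mathbf{X}$. Since $W^t\mathbf{1}_N = \mathbf{1}_N$, the operator annihilates $\hat{\mathbf{X}}$, so $\nabla h_t(\mathbf{X}) = \big((I_N - W^{t})\otimes I_n\big)(\mathbf{X}-\hat{\mathbf{X}})$, and its operator norm equals $\|I_N - W^t\|_{op} = 1 - \sigma_N(W^t) = L_t$; connectivity gives $\sigma_N(W^t)<1$ while the eigenvalues of $W$ lying in $(-1,1]$ give $\sigma_N(W^t)>-1$, hence $L_t\in(0,2]$.

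Given these reductions, parts (ii) and (iii) are short. For (ii), since $\mathrm{grad}\,h_t(\mathbf{X})$ is obtained by applying the block-diagonal orthogonal tangent projection to $\nabla h_t(\mathbf{X})$ (cf.~\eqref{rg}) and orthogonal projection is nonexpansive, $\|\mathrm{grad}\,h_t(\mathbf{X})\| \le \|\nabla h_t(\mathbf{X})\| \le L_t\|\mathbf{X}-\hat{\mathbf{X}}\|$. For (iii), nonexpansiveness again gives $\|\mathrm{grad}\,h_{i,t}(\mathbf{X})\| \le \|(X_i - \hat X) - \sum_j W_{ij}^t (X_j - \hat X)\|$, and the triangle inequality with $\sum_j W_{ij}^t = 1$ and the bound $\max_i\|X_i-\hat X\|\le\delta_1$ from $\mathcal{S}_1$ (see~\eqref{l_r}) yields $2\delta_1$.

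The substantive claim is (i), and the plan rests on the observation that the \emph{Euclidean} sum of the gradients vanishes: $\sum_i \nabla_{X_i} h_{i,t}(\mathbf{X}) = \sum_{i,j} W_{ij}^t(X_i - X_j) = 0$ by symmetry of $W^t$. Writing $\mathrm{grad}\,h_{i,t}(\mathbf{X}) = \nabla_{X_i}h_{i,t}(\mathbf{X}) - n_i$ with $n_i := \mathcal{P}_{N_{X_i}\mathcal{M}}(\nabla_{X_i}h_{i,t}(\mathbf{X}))$ the normal component reduces the sum to $\sum_i \mathrm{grad}\,h_{i,t}(\mathbf{X}) = -\sum_i n_i$, so it suffices to control $\sum_i\|n_i\|$. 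For each $i$, $\|n_i\|^2 = \langle n_i, \nabla_{X_i}h_{i,t}(\mathbf{X})\rangle = \sum_j W_{ij}^t\langle n_i, X_i - X_j\rangle$; applying the normal vector inequality \eqref{r_cur} to $n_i$ and to $-n_i$ gives $|\langle n_i, X_i - X_j\rangle| \le \tfrac{\kappa_g\|n_i\|}{2} d_\mathcal{M}(X_i,X_j)^2$, whence $\|n_i\| \le \tfrac{\kappa_g}{2}\sum_j W_{ij}^t d_\mathcal{M}(X_i,X_j)^2$. Summing over $i$ and converting geodesic to Euclidean distance via $d_\mathcal{M}(X_i,X_j)^2 \le 2\|X_i - X_j\|^2$ leaves $\sum_i\|n_i\| \le \kappa_g \sum_{i,j}W_{ij}^t\|X_i-X_j\|^2$. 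Finally I would recognize $\sum_{i,j}W_{ij}^t\|X_i-X_j\|^2 = 2\langle \mathbf{X}-\hat{\mathbf{X}}, ((I_N-W^t)\otimes I_n)(\mathbf{X}-\hat{\mathbf{X}})\rangle \le 2L_t\|\mathbf{X}-\hat{\mathbf{X}}\|^2$, producing exactly $\|\sum_i n_i\| \le 2\kappa_g L_t\|\mathbf{X}-\hat{\mathbf{X}}\|^2$.

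The main obstacle is the geodesic-to-Euclidean distance comparison $d_\mathcal{M}(X_i,X_j)\le\sqrt2\,\|X_i-X_j\|$: the argument inside the proof of Lemma~\ref{nv} establishes $\|Y-X\|\ge \tfrac{1}{\sqrt2}d_\mathcal{M}(X,Y)$ only when $d_\mathcal{M}(X,Y)\le D$, whereas $\mathcal{S}_3$ directly controls only the distances $d_\mathcal{M}(X_i,\hat X)\le D$, so that pairwise distances could a priori be as large as $2D$. Retaining the factor $L_t$ is precisely what forces me to pass through the Euclidean pairwise quadratic form, since a crude triangle-inequality bound through $\hat X$ (i.e.\ $\sum_{i,j}W_{ij}^t d_\mathcal{M}(X_i,X_j)^2 \le 4\sum_i d_\mathcal{M}(X_i,\hat X)^2$) would destroy the $L_t$ dependence and yield only a $4\kappa_g$ constant. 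Hence the delicate point is to argue, using the local region \eqref{l_r}, that the relevant pairs $X_i,X_j$ are close enough in geodesic distance for the $\sqrt2$-comparison to hold with the stated constants; this is where I would spend the most care, invoking either geodesic convexity of the neighborhood or the Euclidean smallness guaranteed on $\mathcal{S}_1$.
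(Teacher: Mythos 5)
Your argument has the same skeleton as the paper's for all three parts: write $\text{grad}\, h_{i,t}(\mathbf{X})=\nabla_{X_i}h_{i,t}(\mathbf{X})-n_i$ with $n_i=\mathcal{P}_{N_{X_i}\mathcal{M}}(\nabla_{X_i}h_{i,t}(\mathbf{X}))$, use $\sum_i\nabla_{X_i}h_{i,t}(\mathbf{X})=0$ to reduce (i) to bounding $\sum_i\|n_i\|$, control each $\|n_i\|$ by the normal vector inequality, and convert $\sum_{i,j}W_{ij}^t\|X_i-X_j\|^2=4h_t(\mathbf{X})$ into $2L_t\|\mathbf{X}-\hat{\mathbf{X}}\|^2$ via the quadratic form of $(I_N-W^t)\otimes I_n$; parts (ii) and (iii) likewise match the paper up to cosmetics (you bound $\|\nabla h_t(\mathbf{X})\|$ by the operator norm $L_t$ directly, while the paper routes through $\|\nabla h_t(\mathbf{X})\|^2\le 2L_t h_t(\mathbf{X})\le L_t^2\|\mathbf{X}-\hat{\mathbf{X}}\|^2$). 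The one substantive divergence is the step where you invoke the Riemannian-distance form \eqref{r_cur} and then need $d_\mathcal{M}(X_i,X_j)\le\sqrt{2}\,\|X_i-X_j\|$: the paper instead applies the Euclidean form \eqref{cur} with $X=X_i$, $Y=X_j$, $w=-n_i$, obtaining $\|n_i\|\le\kappa_g\sum_j W_{ij}^t\|X_i-X_j\|^2$ in one stroke and never passing through geodesic pairwise distances, which is how it arrives at the constant $2\kappa_g L_t$ without your $\sqrt{2}$-comparison. That said, the obstacle you flag is genuine and is not actually dispelled by switching to \eqref{cur}: that inequality is only stated for $X_j\in\text{Exp}_{X_i}(B(0_{X_i},D))$, i.e.\ it still tacitly requires $d_\mathcal{M}(X_i,X_j)\le D$, whereas $\mathcal{S}_3$ only controls $d_\mathcal{M}(X_i,\hat X)\le D$. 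The paper's proof silently assumes this pairwise control; in the regime where the lemma is actually invoked ($\mathbf{X}\in\mathcal{S}\subseteq\mathcal{S}_1$ with $\delta_1\le\tfrac{1}{20\kappa_g}$) the pairwise Euclidean distances are at most $2\delta_1$, which is how one would justify it, exactly as you anticipate. So your proposal is essentially the paper's proof with one extra (and avoidable) distance-comparison step, and the care you propose to spend on the pairwise-distance issue is care the paper itself omits.
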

	
	\begin{lemma}\label{rs}
		Suppose Assumption \ref{assu-3.2}, \ref{k} hold. If $\bold{X} \in \mathcal{S}$, it holds
		$\langle \bold{X}-\bold{\hat X}, \text{grad}\ h_t(\bold{X}) \rangle \geq \frac{\Phi_{\kappa_g}}{2L_t} \|\text{grad}\ h_t(\bold{X})\|^2$.
		where $\Phi_{\kappa_g}: = 2-4\kappa_g \max_{i}\|X_i - \hat X\|$ and $\Phi_{\kappa_g} \in [\frac{9}{5}, 2]$.
	\end{lemma}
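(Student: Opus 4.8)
The plan is to compare $\mathbf{G}:=\mathrm{grad}\,h_t(\mathbf{X})$ with the ambient Euclidean gradient and to isolate a curvature correction that Lemma~\ref{nv} controls. First I would compute the Euclidean gradient blockwise: with $\mathbf{W}^t:=W^t\otimes I_n$ one has $E_i:=\nabla_{X_i}h_t(\mathbf{X})=\sum_{j=1}^N W_{ij}^t(X_i-X_j)$, so that $\mathbf{E}:=\nabla h_t(\mathbf{X})=(I-\mathbf{W}^t)\mathbf{X}$ and $G_i=\mathrm{grad}\,h_{i,t}(\mathbf{X})=\mathcal{P}_{T_{X_i}\mathcal{M}}[E_i]$, hence $E_i-G_i\in N_{X_i}\mathcal{M}$. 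Since $W^t\mathbf{1}_N=\mathbf{1}_N$, the stacked IAM $\hat{\mathbf{X}}=\mathbf{1}_N\otimes\hat X$ lies in $\ker(I-\mathbf{W}^t)$, so $(I-\mathbf{W}^t)\hat{\mathbf{X}}=0$ and $\mathbf{E}=(I-\mathbf{W}^t)(\mathbf{X}-\hat{\mathbf{X}})$. Writing $\delta:=\max_i\|X_i-\hat X\|$, I will in fact establish the inequality with a second-order factor $1-2\kappa_g^2\delta^2$, which implies the stated bound because $\kappa_g\delta\le 1$ forces $1-2\kappa_g^2\delta^2\ge 1-2\kappa_g\delta=\tfrac12\Phi_{\kappa_g}$.

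The Euclidean part is pure linear algebra. Because $I-\mathbf{W}^t$ is symmetric positive semidefinite with largest eigenvalue $L_t=1-\sigma_N(W^t)$, the operator inequality $(I-\mathbf{W}^t)^2\preceq L_t(I-\mathbf{W}^t)$ holds, so $\langle\mathbf{X}-\hat{\mathbf{X}},\mathbf{E}\rangle=(\mathbf{X}-\hat{\mathbf{X}})^\top(I-\mathbf{W}^t)(\mathbf{X}-\hat{\mathbf{X}})\ge\frac{1}{L_t}\|(I-\mathbf{W}^t)(\mathbf{X}-\hat{\mathbf{X}})\|^2=\frac{1}{L_t}\|\mathbf{E}\|^2\ge\frac{1}{L_t}\|\mathbf{G}\|^2$, the last step since each $G_i$ is the orthogonal projection of $E_i$. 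I would also record the identity $\langle\mathbf{X}-\hat{\mathbf{X}},\mathbf{E}\rangle=\langle\mathbf{X},(I-\mathbf{W}^t)\mathbf{X}\rangle=2h_t(\mathbf{X})=\tfrac12\sum_{i,j}W_{ij}^t\|X_i-X_j\|^2$, so that the curvature error can be measured against this same quantity.

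The heart of the argument, and the step I expect to be the main obstacle, is the curvature correction. Splitting $\langle\mathbf{X}-\hat{\mathbf{X}},\mathbf{G}\rangle=\langle\mathbf{X}-\hat{\mathbf{X}},\mathbf{E}\rangle-\sum_i\langle X_i-\hat X,E_i-G_i\rangle$, I would bound each term using that $E_i-G_i\in N_{X_i}\mathcal{M}$ and $d_\mathcal{M}(X_i,\hat X)\le D$ on $\mathcal{S}_3$: applying \eqref{cur} to $\pm(E_i-G_i)$ gives $|\langle X_i-\hat X,E_i-G_i\rangle|\le\kappa_g\|E_i-G_i\|\,\|X_i-\hat X\|^2\le\kappa_g\delta^2\|E_i-G_i\|$. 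The key is that $\|E_i-G_i\|$ is itself second order: from $E_i-G_i=\sum_j W_{ij}^t\,\mathcal{P}_{N_{X_i}\mathcal{M}}[X_i-X_j]$ and \eqref{cur} applied to each normal chord component (again with $\pm w$), one gets $\|\mathcal{P}_{N_{X_i}\mathcal{M}}[X_i-X_j]\|\le\kappa_g\|X_i-X_j\|^2$, whence $\|E_i-G_i\|\le\kappa_g\sum_j W_{ij}^t\|X_i-X_j\|^2$ and $\sum_i\|E_i-G_i\|\le\kappa_g\sum_{i,j}W_{ij}^t\|X_i-X_j\|^2=4\kappa_g h_t$. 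Combining, the total correction is at most $\kappa_g\delta^2\cdot4\kappa_g h_t=4\kappa_g^2\delta^2 h_t=2\kappa_g^2\delta^2\langle\mathbf{X}-\hat{\mathbf{X}},\mathbf{E}\rangle$, so the weighted error cancels cleanly against $\langle\mathbf{X}-\hat{\mathbf{X}},\mathbf{E}\rangle$. The delicate technical point is invoking \eqref{cur} for the pair $(X_i,X_j)$, which requires $d_\mathcal{M}(X_i,X_j)\le D$; I would deduce this from $\|X_i-X_j\|\le 2\delta_1$ together with the local equivalence of the Riemannian and Euclidean distances that follows from the proof of Lemma~\ref{nv}.

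Assembling the pieces, $\langle\mathbf{X}-\hat{\mathbf{X}},\mathbf{G}\rangle\ge(1-2\kappa_g^2\delta^2)\langle\mathbf{X}-\hat{\mathbf{X}},\mathbf{E}\rangle\ge\frac{1-2\kappa_g^2\delta^2}{L_t}\|\mathbf{G}\|^2\ge\frac{1-2\kappa_g\delta}{L_t}\|\mathbf{G}\|^2=\frac{\Phi_{\kappa_g}}{2L_t}\|\mathbf{G}\|^2$, where the penultimate inequality uses $\kappa_g\delta\le 1$. Finally $\delta\le\delta_1\le\frac{1}{20\kappa_g}$ yields $4\kappa_g\delta\le\frac15$, so $\Phi_{\kappa_g}=2-4\kappa_g\max_i\|X_i-\hat X\|\in[\frac95,2]$. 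Everything outside the correction estimate of the third paragraph reduces to the projection identities and the spectral bound, so that estimate—obtaining the second-order bound on $\|E_i-G_i\|$ and justifying \eqref{cur} on neighbor pairs—is the only genuine difficulty.
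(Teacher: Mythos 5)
Your proposal is correct and shares the paper's overall skeleton --- split $\langle \mathbf{X}-\hat{\mathbf{X}},\operatorname{grad} h_t(\mathbf{X})\rangle$ into the Euclidean inner product $\langle \mathbf{X}-\hat{\mathbf{X}},\nabla h_t(\mathbf{X})\rangle=2h_t(\mathbf{X})$ minus a normal correction, control the correction via Lemma~\ref{nv}, and finish with $\|\operatorname{grad} h_t(\mathbf{X})\|^2\le\|\nabla h_t(\mathbf{X})\|^2\le 2L_t h_t(\mathbf{X})$ --- but the key estimate is carried out differently. The paper applies \eqref{cur} to the pairs $(X_i,X_j)$ with the \emph{first-order} normal vector $w=\mathcal{P}_{N_{X_i}\mathcal{M}}(\hat X-X_i)$, giving a correction of $4\kappa_g\delta\, h_t(\mathbf{X})$ with $\delta=\max_i\|X_i-\hat X\|$ and hence the factor $\Phi_{\kappa_g}=2-4\kappa_g\delta$ directly. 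You instead take the normal vector to be $E_i-G_i=\mathcal{P}_{N_{X_i}\mathcal{M}}(\nabla h_{i,t}(\mathbf{X}))$, which is itself second order by the paper's own bound \eqref{nor-h}, and pair it with the chord $\hat X-X_i$; this yields the correction $4\kappa_g^2\delta^2 h_t(\mathbf{X})$, a strictly sharper constant $2-4\kappa_g^2\delta^2\ge\Phi_{\kappa_g}$ in the regime $\kappa_g\delta\le\tfrac{1}{20}$, which you then deliberately weaken to match the stated form. Both routes face the same technical caveat you flag: invoking \eqref{cur} (equivalently \eqref{nor-h}) for neighbor pairs requires $d_\mathcal{M}(X_i,X_j)\le D$, whereas $\mathcal{S}_3$ only gives $d_\mathcal{M}(X_i,\hat X)\le D$ for each $i$; the paper's proofs of this lemma and of Lemma~\ref{g}(i) pass over this silently, so your explicit appeal to the local distance equivalence from the proof of Lemma~\ref{nv} (using $\|X_i-X_j\|\le 2\delta_1\le\tfrac{1}{10\kappa_g}$) is, if anything, more careful. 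In short: same decomposition and same toolkit, a genuinely different assignment of ``normal vector'' versus ``chord'' in the curvature inequality, and a marginally stronger constant as the payoff.
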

	
	We make the following assumption on $\beta_k$.
	\begin{assumption}\label{beta}
		The positive sequence $\{\beta_k\}_{k\geq 0}$ is nondecreasing and satisfies$\sum_{k=0}^{\infty}\frac{1}{\beta_k}=\infty,~\lim_{k \rightarrow\infty}\beta_k=\infty,~\lim_{k \rightarrow\infty} \frac{\beta_{k+1}}{\beta_k}=1.$
	\end{assumption}
	
	For weakly-convex functions in Euclidean space, there is a useful inequality known as the {\it  subgradient inequality}, i.e., $\phi(Y)\geq \phi(X)+\langle g_X, Y-X \rangle - \frac{\rho}{2}\|Y-X\|^2,~g_X \in \partial \phi(X),~X,Y\in \mathbb{R}^n$. We extend this to weakly-convex functions on $\mathcal{M}$ based on Lemma \ref{nv}.
	\begin{lemma}\label{R-sub}
		Suppose Assumption \ref{k} holds. If function $\phi(\cdot)$ is $L$-Lipschitz continuous on $\mathcal{M}$ and $\rho$-weakly-convex, then for any fixed $X\in \mathcal{M}$, when $Y \in \Omega$, we have $
		\phi(Y) \geq \phi(X)+\langle g_X, Y - X \rangle - \frac{\rho+2\kappa_g L}{2}\|Y-X\|^2,$
		where $g_X \in \partial_R \phi(X)$.
	\end{lemma}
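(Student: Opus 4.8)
The plan is to reduce the statement to the ambient Euclidean weakly-convex inequality and then correct for the manifold geometry using the normal vector inequality of Lemma \ref{nv}. Since $\phi$ is $\rho$-weakly-convex in $\mathbb{R}^n$, for any Euclidean subgradient $\tilde{\nabla}\phi(X) \in \partial \phi(X)$ the standard subgradient inequality for weakly-convex functions gives, for every $Y \in \mathcal{M}$,
\[
\phi(Y) \geq \phi(X) + \langle \tilde{\nabla}\phi(X), Y-X \rangle - \tfrac{\rho}{2}\|Y-X\|^2 .
\]
By the characterization \eqref{subd}, the Riemannian subgradient is the tangential projection $g_X = \mathcal{P}_{T_X\mathcal{M}}[\tilde{\nabla}\phi(X)] \in \partial_R \phi(X)$. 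I would therefore split $\tilde{\nabla}\phi(X) = g_X + w$, where $w := \tilde{\nabla}\phi(X) - g_X \in N_X\mathcal{M}$ is the normal component, and substitute this decomposition into the inequality above. This produces an extra cross term $\langle w, Y-X \rangle$ that must be controlled.

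The key step is to bound this normal cross term from below via Lemma \ref{nv}. Because $Y \in \Omega$ and $-w \in N_X\mathcal{M}$ is again a normal vector, applying \eqref{cur} to $-w$ yields $\langle -w, Y-X \rangle \leq \kappa_g\|w\|\,\|Y-X\|^2$, that is $\langle w, Y-X \rangle \geq -\kappa_g\|w\|\,\|Y-X\|^2$. Next I would estimate the magnitude of the normal component: since $\phi$ is $L$-Lipschitz in the ambient space, every Euclidean subgradient satisfies $\|\tilde{\nabla}\phi(X)\|\leq L$, and the orthogonal projection onto $N_X\mathcal{M}$ is nonexpansive, so $\|w\| \leq \|\tilde{\nabla}\phi(X)\| \leq L$. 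Combining the two bounds gives $\langle w, Y-X \rangle \geq -\kappa_g L\,\|Y-X\|^2$.

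Finally I would substitute this lower bound back to obtain
\[
\phi(Y) \geq \phi(X) + \langle g_X, Y-X \rangle + \langle w, Y-X \rangle - \tfrac{\rho}{2}\|Y-X\|^2 \geq \phi(X) + \langle g_X, Y-X \rangle - \tfrac{\rho+2\kappa_g L}{2}\|Y-X\|^2,
\]
which is precisely the claimed Riemannian subgradient inequality.

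I expect the main subtlety to be the correct signed use of \eqref{cur}: Lemma \ref{nv} provides an \emph{upper} bound on $\langle w, Y-X \rangle$, whereas lower-bounding $\phi(Y)$ requires a \emph{lower} bound on the cross term, so one must invoke \eqref{cur} with $-w$ in place of $w$ rather than with $w$ directly. A secondary point to verify is that the hypotheses of Lemma \ref{nv} are in force, namely that $Y$ lies in the geodesic ball $\Omega = \text{Exp}_X(B(0_X,D))$ with $D = \frac{2-\sqrt{2}}{\kappa_g}$; this is exactly the standing assumption $Y \in \Omega$ of the present lemma, so no additional restriction on $Y$ is needed.
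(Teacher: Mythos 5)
Your proposal is correct and follows essentially the same route as the paper's own proof: start from the Euclidean weakly-convex subgradient inequality, split the Euclidean subgradient into its tangential part $g_X$ and normal part $w$, lower-bound the normal cross term via \eqref{cur} by $-\kappa_g\|w\|\|Y-X\|^2$, and control $\|w\|\leq L$ using Lipschitz continuity of $\phi$. The signed application of \eqref{cur} that you flag as the main subtlety is handled identically in the paper.
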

	
	By the first-order optimality condition of \eqref{prox}, we derive the following lemma, which shows that proximal algorithm framework is a backward algorithm. 
	\begin{lemma}\label{b-v}
		Suppose Assumption \ref{k} holds. Denote $\tilde{X}_{i,k} = X_{i,k}+v_{i,k}$. The search direction $v_{i,k}$ satisfies $0 = \beta_k v_{i,k} + u_{i,k}$, where $u_{i,k} \in \partial_R F_{i,X_{i,k}}(\tilde{X}_{i,k}, \xi_{i,k})$, and can be bounded by $\|v_{i,k}\| \leq \frac{L}{\beta_k}$ for $\forall i\in \mathcal{N}, k\geq 0$. 
	\end{lemma}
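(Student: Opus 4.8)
The plan is to read off the identity $0=\beta_k v_{i,k}+u_{i,k}$ directly from the first-order optimality condition of the defining subproblem \eqref{prox}, and then to bound $\|v_{i,k}\|$ through the Lipschitz property in Definition~\ref{s-model}(d). The key structural fact is that \eqref{prox} minimizes $g(v):=F_{i,X_{i,k}}(X_{i,k}+v,\xi_{i,k})+\tfrac{\beta_k}{2}\|v\|^2$ over the \emph{linear} subspace $T_{X_{i,k}}\mathcal{M}\subset\mathbb{R}^n$. Writing $F_{i,X_{i,k}}(\cdot,\xi_{i,k})+\tfrac{\rho}{2}\|\cdot\|^2$ as a convex function (Definition~\ref{s-model}(c)) shows that $g$ is $(\beta_k-\rho)$-strongly convex whenever $\beta_k>\rho$, so $v_{i,k}$ is its unique minimizer and I may use ordinary convex subdifferential calculus on the subspace.

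First I would write the optimality condition for minimizing a convex function over a linear subspace: $v_{i,k}$ solves \eqref{prox} if and only if $0\in\mathcal{P}_{T_{X_{i,k}}\mathcal{M}}\bigl[\partial_v g(v_{i,k})\bigr]$. By the sum rule and translation invariance in $v$, one has $\partial_v g(v)=\partial F_{i,X_{i,k}}(X_{i,k}+v,\xi_{i,k})+\beta_k v$, where the first term is the ambient subdifferential of $Y\mapsto F_{i,X_{i,k}}(Y,\xi_{i,k})$ at $Y=\tilde X_{i,k}$. Since $v_{i,k}\in T_{X_{i,k}}\mathcal{M}$ is fixed by the projection, applying $\mathcal{P}_{T_{X_{i,k}}\mathcal{M}}$ to the optimality inclusion yields $0=\beta_k v_{i,k}+u_{i,k}$, where $u_{i,k}:=\mathcal{P}_{T_{X_{i,k}}\mathcal{M}}\bigl[\tilde\nabla F_{i,X_{i,k}}(\tilde X_{i,k},\xi_{i,k})\bigr]$ for some $\tilde\nabla F_{i,X_{i,k}}(\tilde X_{i,k},\xi_{i,k})\in\partial F_{i,X_{i,k}}(\tilde X_{i,k},\xi_{i,k})$. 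Through the projection characterization \eqref{subd} of the Riemannian subdifferential, this identifies $u_{i,k}$ as an element of $\partial_R F_{i,X_{i,k}}(\tilde X_{i,k},\xi_{i,k})$, establishing the first assertion and exhibiting the scheme as an implicit (backward) step.

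For the norm bound I would simply take norms in $0=\beta_k v_{i,k}+u_{i,k}$: one gets $\beta_k\|v_{i,k}\|=\|u_{i,k}\|\le\|\tilde\nabla F_{i,X_{i,k}}(\tilde X_{i,k},\xi_{i,k})\|$, using that the orthogonal projection $\mathcal{P}_{T_{X_{i,k}}\mathcal{M}}$ is nonexpansive. Because $F_{i,X_{i,k}}(\cdot,\xi_{i,k})$ is $L$-Lipschitz (Definition~\ref{s-model}(d)), every ambient subgradient has norm at most $L$, so $\beta_k\|v_{i,k}\|\le L$, i.e. $\|v_{i,k}\|\le L/\beta_k$ uniformly in $i$ and $k$.

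I expect the main difficulty to be notational rather than analytic: the symbol $\partial_R F_{i,X_{i,k}}(\tilde X_{i,k},\xi_{i,k})$ must be reconciled with \eqref{subd}, even though $\tilde X_{i,k}=X_{i,k}+v_{i,k}$ need not lie on $\mathcal{M}$. The clean resolution is to run the entire argument on the fixed tangent space $T_{X_{i,k}}\mathcal{M}$, interpreting the Riemannian subdifferential as the tangent-space projection at the base point $X_{i,k}$ of the ambient subdifferential of the model at $\tilde X_{i,k}$; all subgradient manipulations then reduce to genuine Euclidean convex calculus on a linear subspace. Assumption~\ref{k} plays no active role here beyond being inherited from the framework, and the whole statement rests on the convex optimality condition together with the Lipschitz bound on the stochastic model.
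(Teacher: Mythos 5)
Your proof is correct and takes essentially the same route the paper intends: the paper offers no detailed argument for Lemma \ref{b-v} beyond the remark that it follows ``by the first-order optimality condition of \eqref{prox}'', and your write-up --- optimality of the convex subproblem over the linear subspace $T_{X_{i,k}}\mathcal{M}$, projection of the ambient subdifferential to produce $u_{i,k}=-\beta_k v_{i,k}$, and the $L$-Lipschitz bound of Definition \ref{s-model}(d) together with nonexpansiveness of the projection to get $\|v_{i,k}\|\le L/\beta_k$ --- is exactly the standard filling-in of that remark. Your reading of $\partial_R F_{i,X_{i,k}}(\tilde X_{i,k},\xi_{i,k})$ as the tangent-space projection at the base point $X_{i,k}$ (since $\tilde X_{i,k}$ need not lie on $\mathcal{M}$) is also the interpretation consistent with how the paper later uses $u_{i,k}$.
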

	
	The following lemma establishes a recursion relating the consensus errors at iterations $k$ and $k+1$.
	\begin{lemma}\label{c-e}
		Suppose that all the assumptions hold. Consider the iterates generated by Algorithm \ref{alg:1}, where the step sizes $0<\alpha < \bar{\alpha}:=\min\{ \frac{\Phi_{\kappa_g}}{4L_t(M_1+M_2\delta_1)}\}$ with $L_t$ and $\Phi_{\kappa_g}$ defined in Lemma \ref{g} and Lemma \ref{rs}, and $\beta_k \geq \rho$. If $\bold{X}_k \in \mathcal{S}$, it holds
		\begin{align}\label{c-err}
		\|\bold{X}_{k+1} - \bold{\hat X}_{k+1}\|^2 \leq  \rho_t^2\| \bm{X}_k - \bold{\hat X}_{k}\|^2 +\tfrac{C_1^2NL^2}{\beta_k^2},
		\end{align}
		where $\rho_t:=\sqrt{1-(\alpha \Phi_{\kappa_g} L_t -4\alpha^2L_t^2(M_1+M_2\delta_1) )} <1$, and $C_1^2:=2(M_1+2M_2\delta_1)+\frac{1}{2\alpha^2 L_t^2 M_1}$. Here, $M_1,~M_2$ are parameters associated with the retraction in Lemma \ref{lem-r}.
	\end{lemma}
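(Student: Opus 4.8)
The plan is to bound the new consensus error by comparing it against the \emph{previous} induced arithmetic mean, then expand the single retraction step into a ``consensus descent'' part and a ``proximal perturbation'' part, contract the former and absorb the latter into the $\mathcal{O}(NL^2/\beta_k^2)$ noise.

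First I would exploit the variational definition \eqref{iam} of the IAM: since $\hat X_{k+1}$ minimizes $\sum_i\|Y-X_{i,k+1}\|^2$ over $Y\in\mathcal M$, substituting the feasible candidate $\hat X_k$ gives
\begin{equation*}
\|\mathbf X_{k+1}-\hat{\mathbf X}_{k+1}\|^2=\sum_{i=1}^N\|X_{i,k+1}-\hat X_{k+1}\|^2\le \sum_{i=1}^N\|X_{i,k+1}-\hat X_{k}\|^2 .
\end{equation*}
This removes the dependence on the moving mean $\hat X_{k+1}$, which would otherwise be hard to track. Writing $\eta_{i,k}=-\alpha\,\text{grad}\,h_{i,t}(\mathbf X_k)+v_{i,k}$ for the retraction argument, I would apply the second-order retraction estimate \eqref{R-2} to split $X_{i,k+1}=X_{i,k}+\eta_{i,k}+e_{i,k}$ with $\|e_{i,k}\|\le M_2\|\eta_{i,k}\|^2$, so that
\begin{equation*}
X_{i,k+1}-\hat X_k=\big(d_{i,k}-\alpha\,\text{grad}\,h_{i,t}(\mathbf X_k)\big)+\big(v_{i,k}+e_{i,k}\big),\qquad d_{i,k}:=X_{i,k}-\hat X_k .
\end{equation*}

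Next I would expand the squared norm into the consensus term $\|\mathbf X_k-\hat{\mathbf X}_k-\alpha\,\text{grad}\,h_t(\mathbf X_k)\|^2$ (using that stacking $\text{grad}\,h_{i,t}$ over $i$ recovers $\text{grad}\,h_t$), a cross term, and the perturbation $\sum_i\|v_{i,k}+e_{i,k}\|^2$. For the consensus term I would invoke the restricted secant inequality of Lemma \ref{rs} together with the gradient-norm bound $\|\text{grad}\,h_t(\mathbf X_k)\|\le L_t\|\mathbf X_k-\hat{\mathbf X}_k\|$ of Lemma \ref{g}(ii); this contracts $\|\mathbf X_k-\hat{\mathbf X}_k\|^2$ by a factor of the form $1-\alpha\Phi_{\kappa_g}L_t$ up to a second-order $\alpha^2L_t^2$ correction, which is exactly where the curvature enters through $\Phi_{\kappa_g}$ and which is strictly below $1$ precisely under $0<\alpha<\bar\alpha$. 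The perturbation is controlled by Lemma \ref{b-v} ($\|v_{i,k}\|\le L/\beta_k$) combined with $\|\text{grad}\,h_{i,t}(\mathbf X_k)\|\le 2\delta_1$ from Lemma \ref{g}(iii), which bounds $\|\eta_{i,k}\|$ and hence $\|e_{i,k}\|$; every resulting remainder then scales like $NL^2/\beta_k^2$.

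The delicate accounting is the cross term $2\sum_i\langle d_{i,k}-\alpha\,\text{grad}\,h_{i,t}(\mathbf X_k),\,v_{i,k}+e_{i,k}\rangle$. I would apply Young's inequality $2\langle u,w\rangle\le\theta\|u\|^2+\theta^{-1}\|w\|^2$ with a weight tuned as $\theta\sim 2\alpha^2L_t^2M_1$, so that the $\theta\|u\|^2$ piece is reabsorbed into the retraction slack of the contraction (producing the factor $4(M_1+M_2\delta_1)$ inside $\rho_t^2$) while the $\theta^{-1}\|w\|^2$ piece contributes the $\tfrac{1}{2\alpha^2L_t^2M_1}$ summand of $C_1^2$; the leftover $\sum_i\|v_{i,k}+e_{i,k}\|^2$ supplies the remaining $2(M_1+2M_2\delta_1)$ part. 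Collecting the genuinely contracting term and all the $\mathcal{O}(NL^2/\beta_k^2)$ remainders then yields \eqref{c-err}. I expect the main obstacle to be this simultaneous balancing: choosing the Young weight $\theta$ and the step-size threshold $\bar\alpha$ together so that the coefficient of $\|\mathbf X_k-\hat{\mathbf X}_k\|^2$ remains strictly below $1$ (i.e. $\rho_t<1$) after all the curvature and retraction corrections are folded in, while ensuring throughout that the hypothesis $\mathbf X_k\in\mathcal S$ keeps Lemmas \ref{rs} and \ref{g} applicable.
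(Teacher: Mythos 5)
Your proposal follows essentially the same route as the paper's proof: bound $\|\mathbf X_{k+1}-\hat{\mathbf X}_{k+1}\|$ against the previous IAM $\hat X_k$ via the variational definition \eqref{iam}, expand the square, control the retraction step with Lemma \ref{lem-r}, contract via Lemmas \ref{rs} and \ref{g}, bound the proximal direction with Lemma \ref{b-v}, and tune a Young weight $\theta\sim 2\alpha^2L_t^2M_1$ to produce exactly the constants appearing in $\rho_t$ and $C_1$. The only (cosmetic) difference is that you group the second-order retraction error $e_{i,k}$ with $v_{i,k}$ instead of keeping $\mathbf X_{k+1}-\mathbf X_k$ intact in the cross term; the resulting bounds are the same.
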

	\begin{proof}
		Since IAM is the minima of \eqref{iam}, we have\begin{align}\label{c_1}
		&\|\bold{X}_{k+1} - \bold{\hat X}_{k+1}\|^2 \leq \|\bold{X}_{k+1} - \bold{\hat X}_{k}\|^2 	=\|\bold{X}_{k+1} - \bm{X}_k \|^2\notag\\
		+&\| \bm{X}_k - \bold{\hat X}_{k}\|^2+2\langle \bold{X}_{k+1} - \bm{X}_k, \bm{X}_k - \bold{\hat X}_{k}\rangle.
		\end{align}
		The following proof intends to give bound to the three terms on the RHS. First, by using \eqref{R-1}, we derive
		\begin{align}\label{c_2}
		\|\bold{X}_{k+1} - \bm{X}_k \|^2 \leq M_1 \|\alpha \text{grad}\ h_{t}(\bold{X}_k) - \bold{v}_k\|^2.
		\end{align}
		Then, for the inner product term, by utilizing Lemma \ref{lem-r} and Lemma \ref{rs}, it follows that 
		\begin{align}\label{c_3}
		&\langle \bold{X}_{k+1} - \bm{X}_k, \bm{X}_k - \bold{\hat X}_{k}\rangle \leq \left(2M_2\delta_1+\tfrac{1}{4\alpha^2 L_t^2 M_1}\right) \|\bold{v}_k\|^2 \\
		+& (\tfrac{4\alpha^2L_tM_2\delta_1-\alpha \Phi_{\kappa_g}}{2L_t})\|\text{grad}\ h_t(\bm{X}_k)\|^2
		+\alpha^2L_t^2M_1 \|\bold{\hat X}_k - \bm{X}_k\|^2.\notag
		\end{align}
		Substituting \eqref{c_2} and \eqref{c_3} into \eqref{c_1}  yields
		\begin{align*}
		&\|\bold{X}_{k+1} - \bold{\hat X}_{k+1}\|^2 \leq (1+2\alpha^2L_t^2M_1)\| \bm{X}_k - \bold{\hat X}_{k}\|^2\\
		+& (2\alpha^2(M_1+2M_2\delta_1)-\tfrac{\alpha \Phi_{\kappa_g}}{L_t})\|\text{grad}\ h_t(\bm{X}_k)\|^2\\
		+& \left(2(M_1+2M_2\delta_1)+\tfrac{1}{2\alpha^2 L_t^2 M_1}\right) \|\bold{v}_k\|^2.
		\end{align*}
		By using Lemma \ref{g}-(\romannumeral2) and Lemma \ref{b-v}, we finally prove \eqref{c-err}, and $\rho_t < 1$ since $\alpha < \frac{\Phi_{\kappa_g}}{4L_t(M_1+M_2\delta_1)}$. 
	\end{proof}
	
	To justify the local analysis, we next show that the iterates $\boldsymbol{X}_k$ remain in the set $\mathcal{S}$ if the initialization lies in $\mathcal{S}$, where $\mathcal{S}$ is defined in \eqref{l_r}. The proof is deferred to Appendix~B. 
	\begin{theorem}\label{th-1}
		Suppose that all the assumptions hold. Consider the iterates generated by Algorithm \ref{alg:1}, where the step sizes $0<\alpha < \bar{\alpha}:=\min\{1,\frac{\kappa_g}{M_2}, \frac{\Phi_{\kappa_g}}{4L_t(M_1+M_2\delta_1)}\}$ with $L_t$ and $\Phi_{\kappa_g}$ defined in Lemma \ref{g} and Lemma \ref{rs}, $\beta_k \geq \max\{\frac{5L}{\alpha\delta_2}, \frac{LC_1}{\delta_2\sqrt{1-\rho_t^2}} \}$, and $t \geq \lceil \log_{\sigma_2}(\frac{1}{5\sqrt{N}})\rceil$. If the initial points satisfy $\bold{X}_0 \in\mathcal{S}$, then we have $\bold{X}_k \in \mathcal{S}$ for any $ k \geq 0$.
	\end{theorem}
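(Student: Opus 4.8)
The plan is to argue by induction on $k$. The base case $k=0$ is exactly the hypothesis $\mathbf{X}_0\in\mathcal{S}$, so it suffices to carry out the inductive step: assuming $\mathbf{X}_k\in\mathcal{S}=\mathcal{S}_1\cap\mathcal{S}_2\cap\mathcal{S}_3$, I would verify that $\mathbf{X}_{k+1}$ lies in each of the three sets. The order of verification matters, because the per-agent (max-over-$i$) bounds defining $\mathcal{S}_1$ and $\mathcal{S}_3$ will lean on the aggregate estimate supplied by $\mathcal{S}_2$ together with the one-step dynamics, while $\mathcal{S}_3$ in turn follows from $\mathcal{S}_1$.

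Membership in $\mathcal{S}_2$ is the direct payoff of Lemma \ref{c-e}. Since $\mathbf{X}_k\in\mathcal{S}$, that lemma gives $\|\mathbf{X}_{k+1}-\hat{\mathbf{X}}_{k+1}\|^2\le\rho_t^2\|\mathbf{X}_k-\hat{\mathbf{X}}_k\|^2+C_1^2NL^2/\beta_k^2$, and the inductive hypothesis $\|\mathbf{X}_k-\hat{\mathbf{X}}_k\|^2\le N\delta_2^2$ turns the right-hand side into $\rho_t^2 N\delta_2^2+C_1^2NL^2/\beta_k^2$. The lower bound $\beta_k\ge LC_1/(\delta_2\sqrt{1-\rho_t^2})$ in the theorem is calibrated precisely so that the additive term is at most $(1-\rho_t^2)N\delta_2^2$ (well defined because $\rho_t<1$ under $\alpha<\bar\alpha$), whence the bound collapses to $N\delta_2^2$ and $\mathbf{X}_{k+1}\in\mathcal{S}_2$.

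The crux is $\mathcal{S}_1$, the per-agent bound $\max_i\|X_{i,k+1}-\hat X_{k+1}\|\le\delta_1$, for which the $\mathcal{S}_2$ estimate alone only yields the useless $\max_i\|X_{i,k+1}-\hat X_{k+1}\|\le\sqrt N\delta_2$. Instead I would track one agent through a single step: writing $e_{i,k}=X_{i,k}-\hat X_k$ and expanding the update \eqref{e-iter} via the retraction estimate \eqref{R-2}, one gets $X_{i,k+1}-\hat X_k=e_{i,k}-\alpha\,\mathrm{grad}\,h_{i,t}(\mathbf{X}_k)+v_{i,k}+E_{i,k}$ with $\|E_{i,k}\|\le M_2\|{-\alpha\,\mathrm{grad}\,h_{i,t}(\mathbf{X}_k)+v_{i,k}}\|^2$. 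Because $\mathrm{grad}\,h_{i,t}(\mathbf{X}_k)$ is the tangent projection of $X_{i,k}-\sum_j W_{ij}^tX_{j,k}$, it produces a genuine contraction $e_{i,k}\mapsto(1-\alpha)e_{i,k}$ plus controllable errors: the normal component discarded by the projection is bounded by $\kappa_g\|e_{i,k}\|^2\le\kappa_g\delta_1^2$ via \eqref{cur} in Lemma \ref{nv}, and the averaging bias $\|\sum_j W_{ij}^tX_{j,k}-\bar X_k\|\le\sigma_2^t\|\mathbf{X}_k-\hat{\mathbf{X}}_k\|\le\delta_2/5$ is forced small by the spectral-gap choice $t\ge[\log_{\sigma_2}(1/(5\sqrt N))]$. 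The proximal perturbation obeys $\|v_{i,k}\|\le L/\beta_k\le\alpha\delta_2/5$ by Lemma \ref{b-v} and $\beta_k\ge 5L/(\alpha\delta_2)$; the retraction remainder $\|E_{i,k}\|$ is $O(\alpha^2\delta_1^2)$ after invoking Lemma \ref{g}(iii). For the drift of the common center I would pass to the Euclidean average through Corollary \ref{e-r ave} and, crucially, use Lemma \ref{g}(i): since $\|\sum_i\mathrm{grad}\,h_{i,t}(\mathbf{X}_k)\|\le 2\kappa_g L_t\|\mathbf{X}_k-\hat{\mathbf{X}}_k\|^2$ is second-order small, the consensus step barely moves the mean, so $\|\hat X_{k+1}-\hat X_k\|=O(\alpha\delta_2)$ rather than $O(M_1\alpha\delta_1)$, using also Lemma \ref{ave} to locate $\bar X_k$ near $\hat X_k$. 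Collecting these estimates with $\delta_1\le 1/(20\kappa_g)$, $\delta_2\le\delta_1/5$ and $\alpha\le\bar\alpha$, every perturbation becomes a small multiple of $\alpha\delta_1$, so $\|X_{i,k+1}-\hat X_{k+1}\|\le(1-c\alpha)\delta_1\le\delta_1$ for some $c>0$, giving $\mathbf{X}_{k+1}\in\mathcal{S}_1$.

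Finally $\mathcal{S}_3$ is inherited from $\mathcal{S}_1$ through the distance comparison $d_\mathcal{M}(X,Y)\le\sqrt2\,\|X-Y\|$ (valid for $Y$ in the $\Omega$-neighborhood, established en route to Lemma \ref{nv}): it yields $d_\mathcal{M}(X_{i,k+1},\hat X_{k+1})\le\sqrt2\,\delta_1$, and since $\delta_1\le 1/(20\kappa_g)$ while $D=(2-\sqrt2)/\kappa_g$ we have $\sqrt2\,\delta_1<D$, so $\mathbf{X}_{k+1}\in\mathcal{S}_3$. I expect the entire difficulty to concentrate in the $\mathcal{S}_1$ step: it is the only genuinely per-agent part of the argument, and it requires balancing the $(1-\alpha)$ contraction of the consensus step simultaneously against five distinct error sources — the projection curvature, the finite-$t$ averaging bias, the proximal perturbation $v_{i,k}$, the retraction remainder, and the motion of the induced mean — each of which must be driven below the contraction margin by the coupled conditions on $\alpha$, $\beta_k$, $t$, $\delta_1$, and $\delta_2$.
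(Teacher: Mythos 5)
Your induction structure and the order of verification ($\mathcal{S}_2$ via Lemma \ref{c-e}, then $\mathcal{S}_1$ via the per-agent one-step decomposition into the $(1-\alpha)$ contraction plus the normal-component, averaging-bias, proximal, retraction, and mean-drift errors, then $\mathcal{S}_3$ from $\mathcal{S}_1$) match the paper's proof essentially step for step, and your accounting of which hypothesis on $\alpha$, $\beta_k$, $t$, $\delta_1$, $\delta_2$ controls which error term is the same as the paper's.

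There is, however, a genuine gap in your $\mathcal{S}_3$ step. The comparison $d_\mathcal{M}(X,Y)\le\sqrt{2}\,\|X-Y\|$ that you invoke is derived in the proof of Lemma \ref{nv} from \eqref{dist} only \emph{under the precondition} $d_\mathcal{M}(X,Y)\le D=\tfrac{2-\sqrt2}{\kappa_g}$ (that is precisely what ``$Y\in\Omega$'' means there: it is needed to guarantee $\|\eta(0)\|-\tfrac{\kappa_g}{2}\|\eta(0)\|^2\ge\tfrac{1}{\sqrt2}\|\eta(0)\|$). Using it to conclude $d_\mathcal{M}(X_{i,k+1},\hat X_{k+1})\le\sqrt2\,\delta_1\le D$ therefore assumes the very membership in $\mathcal{S}_3$ you are trying to establish; the argument is circular as written. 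The obstruction is real: a small Euclidean distance does not by itself force a small geodesic distance on a compact submanifold, since the quadratic relation $r-\tfrac{\kappa_g}{2}r^2\le\|X-Y\|$ (with $r=d_\mathcal{M}(X,Y)$) admits large solutions $r$ on the decreasing branch. The paper closes this by first obtaining a \emph{coarse} a priori bound $d_\mathcal{M}(X_{i,k+1},\hat X_{k+1})\le\tfrac{1}{\kappa_g}$ through the triangle inequality
\begin{align*}
d_\mathcal{M}(X_{i,k+1},\hat X_{k+1})\le d_\mathcal{M}(X_{i,k+1},X_{i,k})+d_\mathcal{M}(X_{i,k},\hat X_{k})+d_\mathcal{M}(\hat X_{k},\hat X_{k+1}),
\end{align*}
where the middle term uses the inductive hypothesis $\mathbf{X}_k\in\mathcal{S}_3$, the first is controlled by Lemma \ref{g}(iii) and Lemma \ref{b-v}, and the last by Corollary \ref{e-r ave}; only then, with $r$ confined to the increasing branch $[0,\tfrac{1}{\kappa_g}]$, does inverting $r-\tfrac{\kappa_g}{2}r^2\le\delta_1$ yield $r\le\tfrac{1-\sqrt{1-2\kappa_g\delta_1}}{\kappa_g}\le D$. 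Your proof needs this intermediate coarse bound (or an equivalent device) before any Euclidean-to-geodesic comparison can be applied; with it inserted, the rest of your argument goes through.
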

	
	\begin{remark}\label{dis_s}
			Theorem~\ref{th-1} plays a crucial role in the subsequent consensus and convergence, as it guarantees that the local condition required in the key lemmas and technical lemmas are satisfied throughout the iteration process once the initialization $\mathbf{X}_0$ lies within a local region $\mathcal{S}$. This means, the initial points across different agents are chosen to be sufficiently close. This requirement is consistent with the initialization strategies adopted in many existing distributed optimization works (see e.g. \cite{pmlr-v139-chen21g, neal2011distributed}), which guide us to initialize all agents from a common starting point in practice, especially for the machine learning applications, such as principal component analysis, dictionary learning.
		\end{remark}
		
		The condition $t \geq \lceil \log_{\sigma_2}(\frac{1}{5\sqrt{N}})\rceil$ is necessary to ensure that $\boldsymbol{X}_k \in \mathcal{S}_1$ for $k \geq 0$. For some specific networks, we estimate the number of $t$ by the number of agents $N$ as shown in Table \ref{tab:t}, since $\sigma_2$ can be determined by $N$ when $W$ is the lazy Metropolis matrix \cite{nedic2017achieving}.
		\begin{table}[ht]
			\centering
			\setlength{\abovecaptionskip}{0cm}
			\setlength{\belowcaptionskip}{-0.2cm}
			\caption{Lower bounds of $t$ for certain networks.}
			\begin{tabular}{ccccc}
				\toprule
				Graph&Path&Ring&ER&Complete  \\
				\midrule
				$t$& $\mathcal{O}(N^2\log N)$ & $\mathcal{O}(N^2\log N)$ &$\mathcal{O}(\log N)$&$1$\\
				\bottomrule
			\end{tabular}
			\label{tab:t}
		\end{table}
		
		Subsequently, we give the convergence rate and error bound of consensus. The proof is left in Appendix C.
		\begin{theorem}\label{th-c}
			Suppose that all the assumptions hold. Consider the iterates generated by Algorithm \ref{alg:1}, where $\alpha,~\beta_k,~t$ satisfies the same condition as Theorem \ref{th-1}. If the initial points satisfy $\bold{X}_0 \in\mathcal{S}$, then for any $  k \geq 0$, 
			
			(\romannumeral1). the consensus error satisfies 
			\begin{equation*}
			\|\bold{X}_{k+1} - \bold{\hat X}_{k+1}\|^2 \leq \rho_t^{2k} \| \bm{X}_0 - \bold{\hat X}_{0}\|^2 +NL^2C_1^2 \sum\nolimits_{l=0}^{k}\tfrac{\rho_t^{2l}}{\beta_{k-l}^2},
			\end{equation*} where $\rho_t\in (0,1)$ and $C_1^2$ are defined in Lemma \ref{c-e};
			
			(\romannumeral2). there holds $\frac{1}{N} \| \bm{X}_k - \bold{\hat X}_{k}\|^2 \leq \frac{CL^2}{\beta_k^2}$ where $C = (\tilde{C}+\frac{4C_1^2}{1-\rho_t^2}) = \mathcal{O}(\frac{1}{1-\rho_t^2})$, which is independent of $L$.
		\end{theorem}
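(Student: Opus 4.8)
The plan is to bootstrap the one-step contraction established in Lemma \ref{c-e} into both the exact accumulated bound of part (\romannumeral1) and the sharp $\mathcal{O}(1/\beta_k^2)$ consensus rate of part (\romannumeral2). The essential prerequisite is Theorem \ref{th-1}: since it guarantees $\bold{X}_k \in \mathcal{S}$ for every $k \ge 0$, the recursion in Lemma \ref{c-e} (which presumes $\bold{X}_k \in \mathcal{S}$) is legitimately applicable at each iteration. Writing $a_k := \|\bold{X}_k - \bold{\hat X}_k\|^2$, that lemma reads $a_{k+1} \le \rho_t^2 a_k + C_1^2 N L^2/\beta_k^2$, and everything below is an analysis of this scalar recursion.

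For part (\romannumeral1) I would simply unroll the recursion from index $k$ down to $0$, collecting the geometric weights $\rho_t^{2l}$ on the perturbation terms. This yields $a_{k+1} \le \rho_t^{2(k+1)} a_0 + N L^2 C_1^2 \sum_{l=0}^{k} \rho_t^{2l}/\beta_{k-l}^2$, and the stated form follows after the trivial bound $\rho_t^{2(k+1)} \le \rho_t^{2k}$. This step is a routine telescoping and carries no difficulty.

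The substantive work is part (\romannumeral2): converting the convolution sum $\sum_{l=0}^{k}\rho_t^{2l}/\beta_{k-l}^2$ into a clean $\mathcal{O}(1/\beta_k^2)$ bound. The obstacle is that $\{\beta_k\}$ is nondecreasing, so $\beta_{k-l}\le\beta_k$ and hence $1/\beta_{k-l}^2 \ge 1/\beta_k^2$ --- the naive term-wise estimate points the wrong way. Instead I would factor out $\beta_k^2$ and control $\sum_{l=0}^{k}\rho_t^{2l}(\beta_k/\beta_{k-l})^2$, rewriting the ratio as the telescoping product $\beta_k/\beta_{k-l}=\prod_{j=k-l+1}^{k}\beta_j/\beta_{j-1}$. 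Exploiting $\lim_k \beta_{k+1}/\beta_k = 1$ from Assumption \ref{beta}, I fix $\epsilon>0$ with $\rho_t^2(1+\epsilon)^2<1$ and pick $K$ so that $\beta_j/\beta_{j-1}\le 1+\epsilon$ whenever $j\ge K$. For the recent terms ($k-l\ge K$) the window ratio is at most $(1+\epsilon)^l$, so those contributions sum to a convergent geometric series bounded by $1/(1-\rho_t^2(1+\epsilon)^2)$. For the finitely many remaining old terms ($k-l<K$) I would bound $\beta_{k-l}\ge\beta_0$ and invoke the fact that $\lim_k\beta_{k+1}/\beta_k=1$ forces subexponential growth of $\beta_k$ (i.e. $\beta_k^{1/k}\to 1$), whence $\rho_t^{2(k-m)}\beta_k^2\to0$ for each fixed $m$; being a fixed finite collection of vanishing terms, they are uniformly bounded. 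Adding the two pieces shows $\sum_{l}\rho_t^{2l}(\beta_k/\beta_{k-l})^2$ is bounded by a constant, i.e. the accumulated sum is $\mathcal{O}(1/\beta_k^2)$, which supplies the $\frac{4C_1^2}{1-\rho_t^2}$ portion of $C$.

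It then remains to absorb the transient term $\rho_t^{2k}\|\bold{X}_0-\bold{\hat X}_0\|^2$. Since $\bold{X}_0\in\mathcal{S}$ and $\mathcal{S}_2$ from \eqref{l_r} gives $\frac1N\|\bold{X}_0-\bold{\hat X}_0\|^2\le\delta_2^2$, and since $\rho_t^{2k}\beta_k^2\to0$ again by subexponential growth, this term is also of the form $\tilde{C}L^2/\beta_k^2$, delivering the remaining part of $C=\tilde{C}+\frac{4C_1^2}{1-\rho_t^2}$. Inspecting the two contributions confirms that $L$ enters only through the explicit numerator $L^2$, so $C$ is $L$-independent and scales as $\mathcal{O}(1/(1-\rho_t^2))$, as claimed. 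I expect the weighted-sum estimate in part (\romannumeral2) to be the main hurdle, precisely because the monotonicity of $\{\beta_k\}$ defeats the obvious bound, and one must instead lean on the asymptotic ratio condition together with the subexponential growth it implies.
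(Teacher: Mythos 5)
Your part (\romannumeral1) is the same telescoping of Lemma \ref{c-e} as the paper, but your part (\romannumeral2) takes a genuinely different route. The paper introduces the weighted sequence $z_k:=\beta_k\|\bm{X}_k-\hat{\bm{X}}_k\|/\sqrt{N}$, asserts a bound $z_k^2\le\tilde{C}L^2$ on the finite initial segment $k\le G$ (where $G$ is chosen so that $\beta_{k+1}/\beta_k\le 2$ beyond it), and then iterates the one-step recursion for $z_k^2$ to reach the limit constant $\tilde{C}+\tfrac{4C_1^2}{1-\rho_t^2}$. You instead unroll the recursion completely and estimate the convolution sum $\sum_{l}\rho_t^{2l}(\beta_k/\beta_{k-l})^2$ directly, splitting into recent indices (controlled by choosing $\epsilon$ with $\rho_t^2(1+\epsilon)^2<1$) and finitely many old indices (controlled by the subexponential growth $\rho_t^{2k}\beta_k^2\to 0$ forced by $\beta_{k+1}/\beta_k\to 1$). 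Both arguments deliver an $L$-independent $C=\mathcal{O}(1/(1-\rho_t^2))$, though your constant need not be literally $\tilde{C}+\tfrac{4C_1^2}{1-\rho_t^2}$. Your version is in fact the more careful one: the paper's inductive step writes $z_{k+1}^2\le\rho_t^2 z_k^2+(C_1L)^2(\beta_{k+1}/\beta_k)^2$, silently dropping the factor $(\beta_{k+1}/\beta_k)^2$ on the contraction term; restoring it with the crude ratio bound $2$ would give coefficient $4\rho_t^2$, which need not be below $1$. Your choice of $\epsilon$ adapted to $\rho_t$ is precisely the fix for that issue. The one point shared by both arguments that deserves an explicit word is the absorption of the transient and of the initial segment into a bound of the form $\tilde{C}L^2/\beta_k^2$ with $\tilde{C}$ independent of $L$: this uses $\beta_0=\mathcal{O}(L)$ (so that $\beta_k/L$ stays $L$-free), which holds for the prescribed choice $\beta_k=\bar\beta\sqrt{k+1}$ but is an implicit assumption in both your write-up and the paper's.
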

		
		Theorem \ref{th-c} (\romannumeral2) shows that the iterates $\bold{X}_k$ achieve consensus linearly to a $\mathcal{O}(\frac{1}{\beta_k^2})$-neighborhood of $\hat{\bold{X}}_k$.
		
		\begin{remark}
			The coefficient $\rho_t$ characterizes the consensus rate on the embedded submanifolds: smaller values of $\rho_t$ correspond to faster consensus. When the manifold degenerate into Euclidean space, we have $M_1=1$, $M_2=0$, and $\Phi_{\kappa_g} = 2$, yielding the smallest rate factor $\rho_t = \sqrt{1-(2\alpha L_t - 4\alpha^2 L_t^2)}$. Therefore, the Euclidean case represents the most favorable scenario for consensus, while the curvature-dependent terms introduce additional geometric effects that enlarge the contraction factor $\rho_t$ and lead to a slower consensus rate.
		\end{remark}
		
		\subsection{Convergence Results in the Local Region}
		We now establish the convergence rate of Algorithm \ref{alg:1} in local region $\mathcal{S}$. 
		
		In advance, we give some preparatory lemmas that will be used in the convergence analysis. The following lemma indicates that the proximal map is single-valued and satisfies a Lipschitz-type inequality. The proof is derived following a similar idea of \cite[Lemma 4.2]{wang2023}, and can be referred to the Supplementary of \cite{zhao2025distributed} for a complete expression.
		\begin{lemma}\label{P-lip}
			Suppose Assumption \ref{assu-3.1} holds and let $\lambda \in (0, \frac{1}{\rho+6\kappa_g L})$. The proximal map $P_{\lambda f}(X)$ is single-valued and satisfies a Lipschitz-type inequality
			$\|P_{\lambda f}(X) - P_{\lambda f}(Y)\|\leq (1-\lambda(\rho+6\kappa_g L))^{-1}\|X-Y\|.$
		\end{lemma}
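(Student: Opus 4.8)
The plan is to mimic the Euclidean strong-monotonicity argument for Moreau proximal maps, while absorbing the manifold curvature through the normal vector inequality of Lemma \ref{nv}; this is exactly the device that inflates the effective weak-convexity modulus from $\rho$ to $\rho+6\kappa_g L$. As preliminaries, existence of a minimizer $Y:=P_{\lambda f}(X)$ follows from compactness of $\mathcal{M}$ and continuity of the objective, and for $X\in\mathcal{M}$ comparing the optimal value with the feasible competitor $X$ and invoking the $L$-Lipschitz continuity of $f$ yields $\tfrac{1}{2\lambda}\|Y-X\|^2\le f(X)-f(Y)\le L\|Y-X\|$, hence the a priori radius bound $\|P_{\lambda f}(X)-X\|\le 2\lambda L$. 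For $\lambda$ in the stated range this keeps all relevant proximal points inside the geodesic neighborhood $\Omega$ of Lemma \ref{nv}, so that Lemma \ref{R-sub} and inequality \eqref{cur} apply; single-valuedness then follows either from the local strong convexity of the regularized objective as in \cite{davis2020s}, or as the $X_1=X_2$ instance of the contraction established below.

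For the Lipschitz estimate I would fix $X_1,X_2\in\mathcal{M}$, set $Y_i=P_{\lambda f}(X_i)$ and $d:=Y_2-Y_1$, and start from the Riemannian first-order optimality condition of \eqref{Me}, which gives $g_i:=-\tfrac{1}{\lambda}\mathcal{P}_{T_{Y_i}\mathcal{M}}(Y_i-X_i)\in\partial_R f(Y_i)$. Applying the Riemannian subgradient inequality of Lemma \ref{R-sub} once at $Y_1$ tested against $Y_2$ and once at $Y_2$ tested against $Y_1$, and adding, I obtain $\langle g_1-g_2,d\rangle\le(\rho+2\kappa_g L)\|d\|^2$. Substituting the formulas for $g_i$, using self-adjointness of the projections, and writing $n_i:=\mathcal{P}_{N_{Y_i}\mathcal{M}}d$ for the normal parts of $d$ at each $Y_i$, the left-hand side expands to
\begin{equation*}
\tfrac{1}{\lambda}\|d\|^2-\tfrac{1}{\lambda}\langle X_2-X_1,d\rangle+\tfrac{1}{\lambda}\langle Y_1-X_1,n_1\rangle-\tfrac{1}{\lambda}\langle Y_2-X_2,n_2\rangle.
\end{equation*}

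The crucial step is to certify that the normal parts are second order in $\|d\|$. Applying \eqref{cur} with the admissible normal vectors $n_1\in N_{Y_1}\mathcal{M}$ and $-n_2\in N_{Y_2}\mathcal{M}$, together with the identities $\langle n_1,d\rangle=\|n_1\|^2$ and $\langle n_2,d\rangle=\|n_2\|^2$, yields $\|n_i\|\le\kappa_g\|d\|^2$. Combined with the radius bound $\|Y_i-X_i\|\le 2\lambda L$, each of the two normal inner products is bounded in magnitude by $2\lambda\kappa_g L\|d\|^2$, and these are precisely the $4\lambda\kappa_g L\|d\|^2$ that upgrade $\rho+2\kappa_g L$ to $\rho+6\kappa_g L$. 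Multiplying the added inequality by $\lambda$ and rearranging would then give
\begin{equation*}
\big[1-\lambda(\rho+6\kappa_g L)\big]\|d\|^2\le\langle X_2-X_1,d\rangle\le\|X_2-X_1\|\,\|d\|,
\end{equation*}
and dividing by $\|d\|$ and by the positive factor $1-\lambda(\rho+6\kappa_g L)$ (positive because $\lambda<1/(\rho+6\kappa_g L)$) delivers the claimed Lipschitz constant; setting $X_1=X_2$ forces $d=0$ and re-confirms single-valuedness.

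The main obstacle I anticipate is the bookkeeping created by the two distinct tangent/normal splittings at $Y_1$ and $Y_2$: the estimate closes only because Lemma \ref{nv} forces the projection mismatch $n_i$ to be $O(\|d\|^2)$ rather than $O(\|d\|)$, so that the curvature enters merely as an additive correction to the modulus rather than destroying the contraction. A secondary point requiring care is checking that the chosen range of $\lambda$, together with $\|Y_i-X_i\|\le 2\lambda L$, keeps $Y_1$ and $Y_2$ mutually within the geodesic ball $\Omega$ so that both Lemma \ref{R-sub} and \eqref{cur} are legitimately invoked.
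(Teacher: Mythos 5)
Your argument is correct and lands on exactly the constant $\rho+6\kappa_g L$, but it takes a genuinely different route from the paper. The paper works with the \emph{Euclidean} subdifferential: the optimality condition is written as $X-\tilde X=\lambda(g_X+p)$ with $g_X\in\partial f(\tilde X)$ and $p\in N_{\tilde X}\mathcal{M}$, monotonicity of the $\rho$-weakly-convex $f$ supplies the $-\rho\|\tilde X-\tilde Y\|^2$ term, and the entire curvature correction comes from applying \eqref{cur} to the normal-cone elements $p,q$ together with the bound $\|p\|,\|q\|\le\frac{1}{\lambda}\|X-\tilde X\|+\|g_X\|\le 3L$, which yields $-\lambda\kappa_g(\|p\|+\|q\|)\|\tilde X-\tilde Y\|^2\ge-6\lambda\kappa_g L\|\tilde X-\tilde Y\|^2$. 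You instead work with the \emph{Riemannian} subdifferential $g_i=-\tfrac{1}{\lambda}\mathcal{P}_{T_{Y_i}\mathcal{M}}(Y_i-X_i)$ and Lemma \ref{R-sub}, so the curvature cost is split: $2\kappa_g L$ enters through the Riemannian subgradient inequality, and the remaining $4\kappa_g L$ through your (correct and rather elegant) observation that the normal components $n_i=\mathcal{P}_{N_{Y_i}\mathcal{M}}d$ satisfy $\|n_i\|\le\kappa_g\|d\|^2$ via \eqref{cur} and $\langle n_i,d\rangle=\|n_i\|^2$, paired with $\|Y_i-X_i\|\le 2\lambda L$. The paper's route is shorter because the normal-cone element is handled in one shot; your route has heavier bookkeeping (two tangent/normal splittings) but makes explicit \emph{why} the projection mismatch is second order in $\|d\|$, which is the structural reason the contraction survives curvature. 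One point you flag yourself and should not gloss over: both your invocation of Lemma \ref{R-sub} and of \eqref{cur} at $Y_1,Y_2$ require $d_{\mathcal{M}}(Y_1,Y_2)\le D$, i.e.\ that the two proximal points lie in a common $\Omega$-neighborhood; the paper's own proof silently assumes the same when applying \eqref{cur} to $p$ and $q$, and in the paper this is justified only implicitly by the fact that the lemma is invoked on points $X_{i,k},\hat X_k$ already confined to the local region $\mathcal{S}$. So this is a shared, not a new, gap.
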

		
		Since the cost function is nonsmooth, we are not able to show that there is a sufficient descent of the original cost function as in \cite{wang2024proxtrack, deng2023decentralized}. Instead, we consider the average of the Moreau envelope defined in \eqref{Me} and show the one-step improvement of Algorithm \ref{alg:1}.
		\begin{theorem}\label{th-2}
			Suppose that all the assumptions hold. Denote $\gamma:=\rho+2\kappa_g L$.  Consider the Algorithm \ref{alg:1}, where $0<\alpha < \bar{\alpha}$ defined in Theorem \ref{th-1}, $\beta_k \geq \bar{\beta}= \max\{\frac{5L}{\alpha\delta_2}, \frac{C_1L}{\delta_2\sqrt{1-\rho_t^2}} \}$, and $t \geq \lceil \log_{\sigma_2}(\frac{1}{5\sqrt{N}})\rceil$. Let $\lambda \in (0, \nu),~\nu:=\min\{\frac{1}{\rho+6\kappa_g L}, \frac{1}{2\gamma+\tau)}\} $. Then, we have 
			\begin{align}\label{one-step}
			\mathbb{E}_k[\bar f_\lambda(\bold{X}_{k+1})] \leq& \bar f_\lambda (\bold{X}_{k}) + e(\bold{X}_k, \bold{\hat X}_k) +b_k\\
			+& \tfrac{(\gamma+\tau/2)-\frac{1}{2\lambda}}{\lambda \beta_k}\|\mathcal{P}_{\lambda f}(\hat{X}_{k})-\hat{X}_k\|^2,\notag
			\end{align}
			where $\bar f_\lambda(\bold{X}_k): = \frac{1}{N}\sum_{i=1}^{N}f_\lambda(X_{i,k})$, $e(\bold{X}_k, \bold{\hat X}_k)$ is the term related to the consensus error denoted by $e(\bold{X}_k, \bold{\hat X}_k):=\frac{1}{2\lambda}\Big(C_2\alpha^2 L_t^2+ 8\kappa_g \alpha \lambda LL_t+2(1+\tilde{L}^2)(\alpha+\frac{2\gamma+\tau}{\beta_k}) \Big)\frac{\|\bold{X}_k - \bold{\hat X}_k\|^2}{N}+\frac{L(\tilde{L}+1)}{ 2\lambda\beta_k}\frac{\sum_{i=1}^{N}\|X_{i,k}-\hat X_k\|}{N}$, and $b_k = \frac{3L^2}{2\beta_k^2}+\frac{\gamma L^2}{\beta_k^3}$.
		\end{theorem}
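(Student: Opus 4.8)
The plan is to bound $\bar f_\lambda(\bold{X}_{k+1}) = \frac1N\sum_{i=1}^N f_\lambda(X_{i,k+1})$ agent-by-agent through the variational definition of the Moreau envelope in \eqref{Me}, to exploit the proximal optimality of the search directions $v_{i,k}$, and only then to pass to the conditional expectation $E_k$ via the one-sided accuracy of the stochastic model. Throughout I write $Z_k := P_{\lambda f}(\hat X_k)$, the single-valued proximal point guaranteed by Lemma \ref{P-lip} (which is where the constraint $\lambda<\frac{1}{\rho+6\kappa_g L}$ is needed), and $\tilde X_{i,k}=X_{i,k}+v_{i,k}$ as in Lemma \ref{b-v}.

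First I would use $f_\lambda(X_{i,k+1})\le f(Z_k)+\frac{1}{2\lambda}\|Z_k-X_{i,k+1}\|^2$ and subtract the identity $f_\lambda(\hat X_k)=f(Z_k)+\frac{1}{2\lambda}\|Z_k-\hat X_k\|^2$, reducing the one-step change to a difference of squared distances, $f_\lambda(X_{i,k+1})-f_\lambda(\hat X_k)\le \frac{1}{2\lambda}\big(\|Z_k-X_{i,k+1}\|^2-\|Z_k-\hat X_k\|^2\big)$. The retraction estimates \eqref{R-1} and \eqref{R-2} of Lemma \ref{lem-r} let me replace $X_{i,k+1}$ by $X_{i,k}-\alpha\,\text{grad}\,h_{i,t}(\bold{X}_k)+v_{i,k}$ up to a quadratic remainder; expanding the square produces the consensus cross term in $\text{grad}\,h_{i,t}$ (bounded by Lemma \ref{g} and contributing to $e(\bold{X}_k,\bold{\hat X}_k)$), the quadratic $\|v_{i,k}\|^2\le L^2/\beta_k^2$ (contributing to $b_k$ through Lemma \ref{b-v}), and the decisive cross term $-\frac{1}{\lambda}\langle Z_k-X_{i,k},v_{i,k}\rangle$.

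The core of the argument handles this cross term through the optimality of $v_{i,k}$ in \eqref{prox}. Since $v\mapsto F_{i,X_{i,k}}(X_{i,k}+v,\xi_{i,k})$ is $\rho$-weakly convex on $T_{X_{i,k}}\mathcal{M}$ (Definition \ref{s-model}(c)) with tangential subgradient $u_{i,k}=-\beta_k v_{i,k}$ (Lemma \ref{b-v}), I would apply its subgradient inequality at the tangential surrogate $w:=\mathcal{P}_{T_{X_{i,k}}\mathcal{M}}(Z_k-X_{i,k})$, turning $\langle u_{i,k},w\rangle$ into $F_{i,X_{i,k}}(X_{i,k}+w,\xi_{i,k})-F_{i,X_{i,k}}(\tilde X_{i,k},\xi_{i,k})$ plus a $\frac{\rho}{2}$-quadratic term. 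The gap between the surrogate value $F_{i,X_{i,k}}(X_{i,k}+w,\xi_{i,k})$ and $F_{i,X_{i,k}}(Z_k,\xi_{i,k})$ is the Lipschitz error $L\,\|\mathcal{P}_{N_{X_{i,k}}\mathcal{M}}(Z_k-X_{i,k})\|$ (Definition \ref{s-model}(d)); the normal-vector inequality \eqref{cur} of Lemma \ref{nv} bounds this normal component by $\kappa_g\|Z_k-X_{i,k}\|^2$, which is exactly how the curvature enters and upgrades the effective weak-convexity constant to $\gamma=\rho+2\kappa_g L$. Taking $E_k$ and invoking one-sided accuracy (Definition \ref{s-model}(b)) then replaces $F_{i,X_{i,k}}(Z_k,\xi_{i,k})$ by $f_i(Z_k)+\frac{\tau}{2}\|Z_k-X_{i,k}\|^2$ and $F_{i,X_{i,k}}(X_{i,k},\xi_{i,k})$ by $f_i(X_{i,k})$.

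Finally I would sum over $i$, divide by $N$, and replace $X_{i,k}$ by $\hat X_k$ at the cost of consensus terms $\|\bold{X}_k-\bold{\hat X}_k\|^2$ and $\sum_i\|X_{i,k}-\hat X_k\|$ (using Lemma \ref{ave} and the Lipschitz constants), which are precisely the quantities collected into $e(\bold{X}_k,\bold{\hat X}_k)$; the remaining pure $L^2/\beta_k^2$ and $L^2/\beta_k^3$ residuals form $b_k$. The coefficient on $\|Z_k-\hat X_k\|^2=\|P_{\lambda f}(\hat X_k)-\hat X_k\|^2$ then assembles from the $\frac{1}{2\lambda}$ of the envelope difference, the $\gamma+\tau/2$ from the weak-convexity and one-sided-accuracy slacks, and the overall $\frac{1}{\lambda\beta_k}$ scaling carried by $v_{i,k}=-u_{i,k}/\beta_k$, giving exactly $\frac{(\gamma+\tau/2)-\frac{1}{2\lambda}}{\lambda\beta_k}$; the choice $\lambda<\frac{1}{4\gamma+\tau}$ makes this negative. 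The main obstacle I anticipate is the bookkeeping of the cross term: splitting $Z_k-X_{i,k}$ into its tangent and normal parts and applying \eqref{cur} so that the curvature contributes precisely $2\kappa_g L$, while ensuring every stray quadratic and inner-product term lands cleanly in either $e(\bold{X}_k,\bold{\hat X}_k)$ or $b_k$ rather than corrupting the coefficient of the proximal-gap term.
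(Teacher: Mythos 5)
Your skeleton is largely parallel to the paper's: the paper likewise expands $\|X_{i,k+1}-P_{\lambda f}(\cdot)\|^2$ via the retraction bounds \eqref{R-1}--\eqref{R-2}, handles the consensus cross term with Lemma \ref{g}, converts the $v_{i,k}$ cross term through the optimality relation $u_{i,k}=-\beta_k v_{i,k}$ of Lemma \ref{b-v} and a curvature-corrected subgradient inequality giving $\gamma=\rho+2\kappa_g L$ (the paper packages this as Lemma \ref{R-sub}; your tangent/normal split of the argument plus \eqref{cur} is an equivalent mechanism), takes $\mathbb{E}_k$ via one-sided accuracy, and uses Lemma \ref{P-lip} to trade $X_{i,k}$ for $\hat X_k$. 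So the ingredients are right.

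The genuine gap is your choice of anchor point. You compare $X_{i,k+1}$ against $Z_k=P_{\lambda f}(\hat X_k)$ and subtract the identity $f_\lambda(\hat X_k)=f(Z_k)+\frac{1}{2\lambda}\|Z_k-\hat X_k\|^2$, which yields a bound on $f_\lambda(X_{i,k+1})-f_\lambda(\hat X_k)$. The theorem requires a bound on $\bar f_\lambda(\bold{X}_{k+1})-\bar f_\lambda(\bold{X}_{k})$, and Theorem \ref{th-3} telescopes precisely this quantity; your proposal never reconciles $f_\lambda(\hat X_k)$ with $\bar f_\lambda(\bold{X}_k)=\frac1N\sum_i f_\lambda(X_{i,k})$. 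This is not a cosmetic omission. The naive repair --- Lipschitz continuity of the Moreau envelope --- costs $\mathcal{O}\bigl(\frac{L}{N}\sum_i\|X_{i,k}-\hat X_k\|\bigr)=\mathcal{O}(L^2/\beta_k)$ per step with \emph{no} $1/\beta_k$ damping, and since $\sum_k\beta_k^{-1}$ is exactly the normalizer in \eqref{rec}, this contributes an $\mathcal{O}(1)$ term to the final rate and destroys the $\mathcal{O}(1/\sqrt{K})$ conclusion. A more careful repair (expanding $\|Z_k-X_{i,k}\|^2-\|Z_k-\hat X_k\|^2$, averaging over $i$, and invoking Lemma \ref{ave} together with Lemma \ref{P-lip}) shows the discrepancy is in fact $\mathcal{O}(\|\bold{X}_k-\bold{\hat X}_k\|^2/N)$, which would preserve the rate but still produces a coefficient on the squared consensus error with no $\alpha$ or $1/\beta_k$ factor, so it does not reproduce the stated $e(\bold{X}_k,\bold{\hat X}_k)$. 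The paper avoids all of this by anchoring at $P_{\lambda f}(X_{i,k})$, so that $f(P_{\lambda f}(X_{i,k}))+\frac{1}{2\lambda}\|X_{i,k}-P_{\lambda f}(X_{i,k})\|^2$ equals $f_\lambda(X_{i,k})$ exactly, and the passage to $\hat X_k$ happens only inside the term carrying the $1/\beta_k$ prefactor from $v_{i,k}=-u_{i,k}/\beta_k$. You should either switch anchors or explicitly supply and control the $f_\lambda(\hat X_k)-\bar f_\lambda(\bold{X}_k)$ term.
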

		\begin{proof}
			According to \eqref{e-iter}, for each $ i\in \mathcal{N}$, it holds 
			\begin{align}\label{t1}
			&\|X_{i,k+1} - P_{\lambda f} (X_{i,k})\|^2\notag \\
			=& \|\mathcal{R}_{X_{i,k}}\left(-\alpha \text{grad}~h_{i,t}(\bold{X}_k)+v_{i,k} \right) - P_{\lambda f} (X_{i,k})\|^2\\
			\leq & \|X_{i,k} - P_{\lambda f} (X_{i,k})\|^2+M_1^2\|\alpha \text{grad}~h_{i,t}(\bold{X}_k)-v_{i,k}\|^2 \notag\\
			+&2\langle \mathcal{R}_{X_{i,k}}\left(-\alpha \text{grad}~h_{i,t}(\bold{X}_k)+v_{i,k} \right) - X_{i,k}, X_{i,k} - P_{\lambda f} (X_{i,k})\rangle. \notag
			\end{align}
			where the inequality is derived by using \eqref{R-1}.

			By the definition of proximal point $P_{\lambda f}(X)$, we have $f(X) \geq f(P_{\lambda f}(X))+\frac{1}{2\lambda}\|X - P_{\lambda f}(X)\|^2$. Since $f$ is $L$-Lipschitz, for $\forall X \in \mathcal{M}$, we derive that 
			\begin{align}\label{prox-l}
			\|X - P_{\lambda f}(X)\| \leq 2\lambda L.
			\end{align}
			Thus, by \eqref{R-2} and  \eqref{prox-l}, the last term of \eqref{t1} can be further derived as
			\begin{align}\label{t2}
			&\langle \mathcal{R}_{X_{i,k}}\left(-\alpha \text{grad}~h_{i,t}(\bold{X}_k)+v_{i,k} \right) - X_{i,k}, X_{i,k} - P_{\lambda f} (X_{i,k})\rangle \notag\\
			\leq &  2\lambda L M_2 \|\alpha \text{grad}~h_{i,t}(\bold{X}_k)-v_{i,k}\|^2\\
			+& \langle -\alpha \text{grad}~h_{i,t}(\bold{X}_k)+v_{i,k} , X_{i,k} - P_{\lambda f} (X_{i,k})\rangle.\notag
			\end{align}
			
			By combining \eqref{t1} and \eqref{t2}, and taking the summation from $i = 1$ to $N$, we obtain the summation of one-step error to Moreau envelop \begin{align}\label{t3}
			&\sum_{i=1}^{N} \|X_{i,k+1} - P_{\lambda f} (X_{i,k})\|^2\leq \sum_{i=1}^{N}  \|X_{i,k} - P_{\lambda f} (X_{i,k})\|^2\notag\\
			+&\underbrace{(M_1^2+4\lambda L M_2)\sum\nolimits_{i=1}^{N} \|\alpha \text{grad}~h_{i,t}(\bold{X}_k)-v_{i,k}\|^2}_{term 1} \\
			+& \underbrace{2\sum\nolimits_{i=1}^{N}  \langle -\alpha \text{grad}~h_{i,t}(\bold{X}_k), X_{i,k} - P_{\lambda f} (X_{i,k})\rangle}_{term 2}\notag\\
			+&\underbrace{2\sum\nolimits_{i=1}^{N}  \langle v_{i,k} , X_{i,k} - P_{\lambda f} (X_{i,k})\rangle}_{term 3}.\notag
			\end{align}
			
			We then give bound to the above terms. For $term1$, by utilizing (\romannumeral2) of Lemma \ref{g} and Lemma \ref{b-v}, it follows that 
			\begin{align*}
			\sum_{i=1}^{N} \|\alpha \text{grad}~h_{i,t}(\bold{X}_k)-v_{i,k}\|^2 \leq &2 \alpha^2 \|\text{grad}~h_{t}(\bold{X}_k)\|^2 + 2\|\bold{v}_k\|^2  \\
			\leq   &2 \alpha^2 L_t^2\|\bold{X}_k - \bold{\hat X}_k\|^2 + \tfrac{2NL^2}{\beta_k^2}.
			\end{align*} Thus, we have  $term1 \leq (M_1^2+4\lambda L M_2)\alpha^2 L_t^2\|\bold{X}_k - \bold{\hat X}_k\|^2 + \frac{2L^2N(M_1^2+4\lambda L M_2)}{\beta_k^2}.$

			For $term2$, by Cauchy inequality and \eqref{prox-l}, we derive that  \begin{align*}
			&term2 = 2\alpha\sum_{i=1}^N \langle  \text{grad}~h_{i,t}(\bold{X}_k), P_{\lambda f} (X_{i,k}) - X_{i,k}\rangle \\
			=&2\alpha \sum_{i=1}^N \langle  \text{grad}~h_{i,t}(\bold{X}_k), P_{\lambda f} (\hat X_k) - \hat X_k \rangle \notag\\
			+&2\alpha \sum_{i=1}^N\langle  \text{grad}~h_{i,t}(\bold{X}_k), \hat X_k - X_{i,k} + P_{\lambda f} (X_{i,k}) -  P_{\lambda f} (\hat X_k) \rangle\\
			\leq & 4\alpha \lambda L \|\sum_{i=1}^N\text{grad}~h_{i,t}(\bold{X}_k)\| + \alpha \|\text{grad}~h_{t}(\bold{X}_k)\|^2\\
			+& \alpha \sum_{i=1}^N \|\hat X_k - X_{i,k} + P_{\lambda f} (X_{i,k}) -  P_{\lambda f} (\hat X_k)\|^2 .
			\end{align*} 
			By Young's inequality and Lemma \ref{P-lip}, $term2$ turns out to be 
			\begin{align}
			term2 \leq & 4\alpha \lambda L \|\sum_{i=1}^N\text{grad}~h_{i,t}(\bold{X}_k)\| +\alpha \|\text{grad}~h_{t}(\bold{X}_k)\|^2\notag\\
			+ &2\alpha(1+\tilde{L}^2)\|\bold{X}_k - \bold{\hat X}_k\|^2.
			\end{align}
			Then, by Lemma \ref{g}, we have $term2 \leq  (8\kappa_g \alpha \lambda LL_t + 2\alpha(1+\tilde{L}^2)+\alpha L_t^2)\|\bold{X}_k - \bold{\hat X}_k\|^2$, where $\tilde{L}:= (1-\lambda(\rho+6\kappa_g L))^{-1}$.
			
			Next, we consider $term3$. The idea to bound $term3$ follows by the steps below. We first analyze $\langle v_{i,k} , X_{i,k} - P_{\lambda f} (X_{i,k})\rangle$ for $\forall i$, and then bound it by the stationary measurement terms and terms related to $\beta_k$. Finally, we sum the inequality up to derive $term3$ in expectation.
			
			Let $\tilde{X}_{i,k}:=X_{i,k}+v_{i,k}$. By Lemma \ref{b-v}, we implies that \begin{equation}\label{t_3}
			\langle v_{i,k} , X_{i,k} - P_{\lambda f} (X_{i,k})\rangle = \tfrac{1}{\beta_k}\langle u_{i,k} , P_{\lambda f} (X_{i,k}) - X_{i,k}\rangle,
			\end{equation} where $u_{i,k} \in \partial_R F_{i,X_{i,k}}(\tilde{X}_{i,k+1}, \xi_{i,k})$. According to the Riemannian subgradient inequality in Lemma \ref{R-sub}, we have 
			\begin{align}\label{t5}
			&F_{i,X_k}(\mathcal{P}_{\lambda f}(X_{i,k}), \xi_{i,k}) - F_{i,X_k}(\tilde{X}_{i,k}, \xi_{i,k})\\
			\geq& \langle u_{i,k}, \mathcal{P}_{\lambda f}(X_{i,k}) - \tilde{X}_{i,k} \rangle - \tfrac{\rho+2\kappa_g L}{2}\|\mathcal{P}_{\lambda f}(X_{i,k}) - \tilde{X}_{i,k}\|^2.\notag
			\end{align}
			Let $\gamma:=\rho+2\kappa_g L$. By substituting \eqref{t_3} into \eqref{t5} and rearranging terms, we get \begin{align*}
			&\langle v_{i,k} , X_{i,k} - P_{\lambda f} (X_{i,k})\rangle\\
			\leq &\frac{1}{\beta_k}\Big(F_{i,X_k}(\mathcal{P}_{\lambda f}(X_{i,k}), \xi_{i,k}) - F_{i,X_k}(\tilde{X}_{i,k}, \xi_{i,k})\\
			+&\gamma\|X_{i,k} -\mathcal{P}_{\lambda f}(X_{i,k}) \|^2 +  (\gamma+\frac{\beta_k}{2})\|v_{i,k}\|^2\Big)\notag
			\end{align*} 
			
			Denote by $\mathbb{E}_{k}[\cdot]$ the conditional expectation $\mathbb{E}[\cdot|\mathcal{F}_k]$, where $\mathcal{F}_k:=\{\xi_{i,t},\forall i \in \mathcal{N},~\text{and}~0\leq t\leq k-1\}$.
			Since $F_{i,X}(\cdot, \xi_i)$ is $L$-Lipschitz continuous and by Definition \ref{s-model}(b), first we bound
			\begin{align}\label{t4}
			&\mathbb{E}_{k}[F_{i,X_k}(\mathcal{P}_{\lambda f}(X_{i,k}), \xi_{i,k}) - F_{i,X_k}(\tilde{X}_{i,k}, \xi_{i,k})] \notag\\		
			\leq&\mathbb{E}_{k}[F_{i,X_k}(\mathcal{P}_{\lambda f}(X_{i,k}), \xi_{i,k}) - F_{i,X_k}({X}_{i,k}, \xi_{i,k})] \\
			+& L\mathbb{E}_{k}\|X_{i,k} - \tilde{X}_{i,k}\|\notag\\
			\leq &f_i(\mathcal{P}_{\lambda f}(X_{i,k}) - f_i(X_{i,k})+\frac{\tau}{2}\|\mathcal{P}_{\lambda f}(X_{i,k}) - X_{i,k}\|^2\notag\\
			+& L\mathbb{E}_{k}[\|v_{i,k}\|]. \notag
			\end{align}
			
			By taking conditional expectation of \eqref{t_3} and substituting \eqref{t4} into it, we estimate the expectation of $i$-th item in {\it term3}
			\begin{align*}
			&\mathbb{E}_{k}[\langle v_{i,k} , X_{i,k} - P_{\lambda f} (X_{i,k})\rangle]\\
			\leq&\tfrac{1}{\beta_k} \Big(f_i(\mathcal{P}_{\lambda f}(X_{i,k})) - f_i(X_{i,k})+ \tfrac{2\gamma+\tau}{2}\|\mathcal{P}_{\lambda f}(X_{i,k}) - X_{i,k}\|^2\notag\\
			+&L\mathbb{E}_{k}[\|v_{i,k}\|]+(\frac{\beta_k}{2}+\gamma)\mathbb{E}_{k}[\|v_{i,k}\|^2]\Big).
			\end{align*} 
			By Lemma \ref{b-v}, it follows that
			\begin{align}\label{t6}
			&\mathbb{E}_{k}[\langle v_{i,k} , X_{i,k} - P_{\lambda f} (X_{i,k})\rangle] \leq\tfrac{1}{\beta_k}\Big( f_i(\mathcal{P}_{\lambda f}(X_{i,k})) - f_i(X_{i,k})\notag\\
			+& \tfrac{2\gamma+\tau}{2}\|\mathcal{P}_{\lambda f}(X_{i,k}) - X_{i,k}\|^2+\tfrac{3L^2}{2\beta_k}+\tfrac{\gamma L^2}{\beta_k^2}\Big).
			\end{align}
			According to Lemma \ref{P-lip}, we give bound to the function value differences
			\begin{align}\label{f_i}
			&f_i(\mathcal{P}_{\lambda f}(X_{i,k})) - f_i(X_{i,k})\notag\\
			= & f_i(\mathcal{P}_{\lambda f}(X_{i,k})) -f_i(\mathcal{P}_{\lambda f}(\hat{X}_{k}))+f_i(\mathcal{P}_{\lambda f}(\hat{X}_{k})) -f_i(\hat{X}_{k})\notag\\
			+& f_i(\hat{X}_{k}) - f_i(X_{i,k})\notag\\
			\leq & L\|\mathcal{P}_{\lambda f}(X_{i,k}) -\mathcal{P}_{\lambda f}(\hat{X}_{k})\|+f_i(\mathcal{P}_{\lambda f}(\hat{X}_{k})) -f_i(\hat{X}_{k}) \\
			+& L\|X_{i,k}-\hat X_k\|\notag\\
			\leq & L(\tilde{L}+1)\|X_{i,k}-\hat X_k\|+f_i(\mathcal{P}_{\lambda f}(\hat{X}_{k})) -f_i(\hat{X}_{k}),\notag
			\end{align}
			and the proximity difference 
			\begin{align}\label{p_i}
			&\|\mathcal{P}_{\lambda f}(X_{i,k}) - X_{i,k}\|^2\\
			\leq & \|\mathcal{P}_{\lambda f}(X_{i,k}) -\mathcal{P}_{\lambda f}(\hat{X}_{k})+\mathcal{P}_{\lambda f}(\hat{X}_{k})-\hat{X}_k+\hat{X}_k- X_{i,k}\|^2\notag\\
			\leq & 2\|\mathcal{P}_{\lambda f}(\hat{X}_{k})-\hat{X}_k\|^2+4(1+\tilde{L}^2)\|X_{i,k}-\hat X_k\|^2.\notag
			\end{align}
			Plugging \eqref{f_i} and \eqref{p_i} into \eqref{t6} implies that
			\begin{align}\label{t7}
			&\mathbb{E}_{k}[\langle v_{i,k} , X_{i,k} - P_{\lambda f} (X_{i,k})\rangle]\notag\\
			\leq&\Big( L(\tilde{L}+1)\|X_{i,k}-\hat X_k\|+f_i(\mathcal{P}_{\lambda f}(\hat{X}_{k})) -f_i(\hat{X}_{k}) \notag\\
			+& (2\gamma+\tau)\|\mathcal{P}_{\lambda f}(\hat{X}_{k})-\hat{X}_k\|^2 \notag\\
			+& 2(2\gamma+\tau)(1+\tilde{L}^2)\|X_{i,k}-\hat X_k\|^2+\frac{3L^2}{2\beta_k}+\frac{\gamma L^2}{\beta_k^2} \Big).
			\end{align}
			
			By taking summation of \eqref{t7} from $i=1$ to $N$ and since$f(\hat{X}_{k}) \ge f(\mathcal{P}_{\lambda f}(\hat{X}_{k}))+ \frac{1}{2\lambda} \|\mathcal{P}_{\lambda f}(\hat{X}_{k})-\hat{X}_k\|^2$, it holds
			\begin{align*}
			&\mathbb{E}_k[\sum_{i=1}^{N} term3] \leq \tfrac{1}{\beta_k}\Big(L(\tilde{L}+1)\sum_{i=1}^{N}\|X_{i,k}-\hat X_k\|\\
			+&2(2\gamma+\tau)(1+\tilde{L}^2)\|\bold{X}_{k}-\hat {\bold{X}}_k\|^2\notag\\
			+&N(2\gamma+\tau-\tfrac{1}{2\lambda})\|\mathcal{P}_{\lambda f}(\hat{X}_{k})-\hat{X}_k\|^2+\tfrac{3NL^2}{2\beta_k}+\tfrac{N\gamma L^2}{\beta_k^2}\Big).
			\end{align*}
			
			According to the definition of Moreau envelope, we have $
			\mathbb{E}_k[f_\lambda(X_{i,k+1})] \leq f(P_{\lambda f} (X_{i,k})) + \frac{1}{2\lambda}\mathbb{E}_k[\|X_{i,k+1} - P_{\lambda f} (X_{i,k})\|^2].$
			Denote $\bar f_\lambda(\bold{X}_k): = \frac{1}{N}\sum_{i=1}^{N}f_\lambda(X_{i,k})$.  By summing \eqref{t3} from $i=1$ to $N$ and plugging in the bounds of terms $t_1,~t_2,~t_3$, we have
			\begin{align*}
			&\mathbb{E}_k[\bar f_\lambda(\bold{X}_{k+1})]\notag\\
			\leq &\tfrac{1}{N} \sum\nolimits_{i=1}^{N} \left(f(P_{\lambda f} (X_{i,k})) +\tfrac{1}{2\lambda} \mathbb{E}_k[\|X_{i,k+1} - P_{\lambda f} (X_{i,k})\|^2]\right) \notag\\
			\leq &\bar f_\lambda (\bold{X}_{k}) + e(\bold{X}_k, \bold{\hat X}_k) +b_k+ \tfrac{(\gamma+\tau/2)-\frac{1}{2\lambda}}{\lambda \beta_k}\|\mathcal{P}_{\lambda f}(\hat{X}_{k})-\hat{X}_k\|^2
			\end{align*}where $e(\bold{X}_k, \bold{\hat X}_k)$ and $b_k$ are denoted in Theorem \ref{th-2}.
		\end{proof}

		Based on Theorem \ref{th-2}, we further develop the convergence rate of Algorithm \ref{alg:1} using the stationary measurement
		
		\begin{theorem}\label{th-3}
			Consider Algorithm \ref{alg:1}, and suppose that the conditions of Theorem \ref{th-2} hold. Thus,
			\begin{align}\label{rec}
			&\min_{k=0,1,\dots, K} \tfrac{1}{\lambda^2}\mathbb{E}[\|\mathcal{P}_{\lambda f}(\hat{X}_{k})-\hat{X}_k\|^2] \\
			\leq &\frac{2}{1-2\lambda(\gamma+\tau/2)} \frac{\bar{f}_\lambda  (\bold{X}_{0}) - \bar f_\lambda^*+ \sum_{k=0}^{K} a_k +\sum_{k=0}^{K}b_k}{\sum_{k=0}^{K} \frac{1}{\beta_k}} \notag
			\end{align}with $a_k = \frac{CL^2(C_2(2L^2+\alpha^2 L_t^2)+ 8\kappa_g \alpha \lambda LL_t+2\alpha(1+\tilde{L}^2))}{2\lambda\beta_k^2}+\frac{CL^2(1+\tilde{L}^2)(\gamma+\tau/2)}{\lambda \beta_k^3}+\frac{\sqrt{C}L^2(\tilde{L}+1)}{2 \lambda\beta_k^2} $
			Moreover, if let $\lambda \le \frac{1}{2\nu}$ and $\beta_k =\bar{\beta}\sqrt{K+1}$ with $\bar \beta$ defined in Theorem \ref{th-2}, then the order of convergence rate is $ \mathcal{O}\left((1+\kappa_g)(\frac{1}{\sqrt{K+1}}+\frac{1}{K+1})\right).$
		\end{theorem}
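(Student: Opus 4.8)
The plan is to turn the one-step descent inequality \eqref{one-step} of Theorem \ref{th-2} into a telescoping bound on a weighted average of the stationarity measure $\|\mathcal{P}_{\lambda f}(\hat X_k)-\hat X_k\|^2$. First I would absorb the consensus-error term $e(\bold{X}_k,\bold{\hat X}_k)$ into the deterministic quantity $a_k$. This is where Theorem \ref{th-c}(ii) enters: its linear consensus bound $\frac{1}{N}\|\bold{X}_k-\bold{\hat X}_k\|^2 \le \frac{CL^2}{\beta_k^2}$ controls the quadratic part of $e$, and combining it with Cauchy--Schwarz, $\frac{1}{N}\sum_i\|X_{i,k}-\hat X_k\| \le \big(\frac{1}{N}\|\bold{X}_k-\bold{\hat X}_k\|^2\big)^{1/2} \le \frac{\sqrt{C}L}{\beta_k}$, controls the linear part. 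This yields $e(\bold{X}_k,\bold{\hat X}_k)\le a_k$ with the stated $a_k$, whose terms are of order $\beta_k^{-2}$ and $\beta_k^{-3}$.

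Substituting this bound into \eqref{one-step} and taking total expectation gives $\mathbb{E}[\bar f_\lambda(\bold{X}_{k+1})] \le \mathbb{E}[\bar f_\lambda(\bold{X}_k)] + a_k + b_k + \frac{(\gamma+\tau/2)-\frac{1}{2\lambda}}{\lambda\beta_k}\,\mathbb{E}[\|\mathcal{P}_{\lambda f}(\hat X_k)-\hat X_k\|^2]$. Under $\lambda<\frac{1}{2(\gamma+\tau/2)}$ the coefficient satisfies $\frac{1}{2\lambda}-(\gamma+\tau/2)=\frac{1-2\lambda(\gamma+\tau/2)}{2\lambda}>0$, so I can transpose the stationarity term and sum the resulting inequality over $k=0,\dots,K$. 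The function-value differences telescope, and the lower boundedness from Assumption \ref{assu-3.1} (so that $\mathbb{E}[\bar f_\lambda(\bold{X}_{K+1})]\ge \bar f_\lambda^*$) leaves $\frac{1-2\lambda(\gamma+\tau/2)}{2\lambda^2}\sum_{k}\frac{1}{\beta_k}\mathbb{E}[\|\mathcal{P}_{\lambda f}(\hat X_k)-\hat X_k\|^2] \le \bar f_\lambda(\bold{X}_0)-\bar f_\lambda^*+\sum_k(a_k+b_k)$. Bounding the weighted sum below by $\big(\min_k\mathbb{E}[\|\cdot\|^2]\big)\sum_k\beta_k^{-1}$ and dividing through by $\lambda^2$ reproduces \eqref{rec}.

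For the rate I would substitute $\beta_k=\bar\beta\sqrt{K+1}$, which is constant in $k$. Then $\sum_{k=0}^K\beta_k^{-1}=\frac{\sqrt{K+1}}{\bar\beta}$, while $\sum_{k=0}^K\beta_k^{-2}=\bar\beta^{-2}=\mathcal{O}(1)$ and $\sum_{k=0}^K\beta_k^{-3}=\mathcal{O}\big((K+1)^{-1/2}\big)$. Hence the numerator $\bar f_\lambda(\bold{X}_0)-\bar f_\lambda^*+\sum_k(a_k+b_k)$ is $\mathcal{O}(1)+\mathcal{O}\big((K+1)^{-1/2}\big)$, and dividing by $\frac{\sqrt{K+1}}{\bar\beta}$ produces precisely the two terms $\frac{1}{\sqrt{K+1}}$ and $\frac{1}{K+1}$.

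The hard part will be extracting the explicit $(1+\kappa_g)$ prefactor rather than the telescoping argument itself. This demands tracking how $\kappa_g$ propagates through every constant: through $\gamma=\rho+2\kappa_g L$, through $\tilde L=(1-\lambda(\rho+6\kappa_g L))^{-1}$ (which stays bounded thanks to $\lambda\le\frac{1}{2(\rho+6\kappa_g L)}$), through $C_1$ and $C$ inherited from the consensus analysis, and finally through $\bar\beta$, then verifying that their aggregate is $\mathcal{O}(1+\kappa_g)$ while the factor $\frac{2}{1-2\lambda(\gamma+\tau/2)}$ remains bounded under the same choice of $\lambda$. The careful bookkeeping of these curvature-dependent constants is the delicate step.
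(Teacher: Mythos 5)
Your proposal is correct and follows essentially the same route as the paper's proof: take total expectation of \eqref{one-step}, absorb $\mathbb{E}[e(\bold{X}_k,\bold{\hat X}_k)]$ into $a_k$ via Theorem \ref{th-c}(\romannumeral2) (with Jensen/Cauchy--Schwarz for the linear consensus term), telescope, lower-bound the weighted sum by the minimum times $\sum_k\beta_k^{-1}$, and then specialize $\beta_k=\bar\beta\sqrt{K+1}$ while using $\lambda\le\min\{\frac{1}{2\gamma+\tau},\frac{1}{2(\rho+6\kappa_g L)}\}$ to keep $\tilde L$ and $\frac{1}{1-\lambda(2\gamma+\tau)}$ bounded by $2$ before tracking the $\kappa_g$-dependence of the remaining constants. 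No gaps; this matches the paper's argument step for step.
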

		\begin{proof}
			We prove this based on Theorem \ref{th-2}.
			
			Rearranging the terms and taking unconditional expectation of \eqref{one-step} produces \begin{align}\label{t-E}
			&\tfrac{\frac{1}{2\lambda}-(\gamma+\tau/2)}{\lambda \beta_k}\mathbb{E}[\|\mathcal{P}_{\lambda f}(\hat{X}_{k})-\hat{X}_k\|^2] \\
			\leq& \mathbb{E}[\bar f_\lambda  (\bold{X}_{k}) - \bar f_\lambda(\bold{X}_{k+1})]
			+ \mathbb{E}[ e(\bold{X}_k, \bold{\hat X}_k)] +b_k.\notag
			\end{align}
			
			By taking summation of \eqref{t-E} from $k=0$ to $K$, we have
			\begin{align*}
			&\sum _{k=0}^{K}\frac{\frac{1}{2\lambda}-(\gamma+\tau/2)}{\lambda \beta_k}\mathbb{E}[\|{P}_{\lambda f}(\hat{X}_{k})-\hat{X}_k\|^2]\\ 
			\leq& \bar f_\lambda  (\bold{X}_{0}) - \min_{X\in \mathcal{M}} \bar f_\lambda(X) 
			+ \sum_{k=0}^{K} \mathbb{E}[ e(\bold{X}_k, \bold{\hat X}_k)] +\sum _{k=0}^{K}b_k,
			\end{align*}
			Let $\bar f_\lambda^*:=\min_{X\in \mathcal{M}} \bar f_\lambda(X)$. Utilizing Theorem \ref{th-c} (\romannumeral2) further implies that
			\begin{align}
			&\sum_{k=0}^{K}\frac{\frac{1}{2}-\lambda(\gamma+\tau/2)}{\lambda \beta_k}\mathbb{E}[\|{P}_{\lambda f}(\hat{X}_{k})-\hat{X}_k\|^2] \notag\\
			\leq& \bar f_\lambda  (\bold{X}_{0}) -\bar f_\lambda^* + \sum_{k=0}^{K} a_k+\sum_{k=0}^{K}b_k,
			\end{align} 
			where  $a_k: = \frac{CL^2(C_2(2L^2+\alpha^2 L_t^2)+ 8\kappa_g \alpha \lambda LL_t+2\alpha(1+\tilde{L}^2))}{2\lambda\beta_k^2}+\frac{CL^2(1+\tilde{L}^2)(\gamma+\tau/2)}{\lambda \beta_k^3}+\frac{\sqrt{C}L^2(\tilde{L}+1)}{2 \lambda\beta_k^2} \sim \mathcal{O}(\frac{1}{\beta_k^2})$.
			
			By dividing both sides by $\sum_{k=0}^{K}\frac{\frac{1}{2}-\lambda(\gamma+\tau/2)}{\lambda \beta_k}$, we get \eqref{rec} accordingly.
			
			Moreover, when $\beta_k = \bar{\beta}\sqrt{K+1}$, \eqref{rec} implies that
			\begin{align}\label{bound}
			&\min_{k=0,\dots,K} \tfrac{1}{\lambda^2}\mathbb{E}[\|\mathcal{P}_{\lambda f}(\hat{X}_{k})-\hat{X}_k\|^2]  \leq \tfrac{2\bar{\beta}}{1-\lambda(2\gamma+\tau)}\Big(\tfrac{\bar f_\lambda  (\bold{X}_{0}) - \bar f_\lambda^*}{\sqrt{K+1}}\notag\\
			+&\tfrac{\sqrt{C}L^2(\tilde{L}+1)+3L^2\lambda}{2\lambda\sqrt{K+1}}+CL^2\tfrac{C_2(2L^2+\alpha^2 L_t^2)+ 8\kappa_g \alpha \lambda LL_t+2\alpha(1+\tilde{L}^2)}{2\lambda\sqrt{K+1}}\notag\\
			+& \tfrac{CL^2(1+\tilde{L}^2)(2\gamma+\tau)+2\gamma L^2\lambda}{2\lambda (K+1)} \Big).
			\end{align}
			When the weight parameter further satisfies $\lambda \leq \min \{\frac{1}{2\gamma+\tau}, \frac{1}{2(\rho+6L\kappa_g)}\}$, we get
			$\tilde{L} = \frac{1}{1-\lambda(\rho+6L\kappa_g)}\leq 2,~\text{and}~\frac{1}{1-\lambda(2\gamma+\tau)} \leq 2.$
			Plugging this into \eqref{bound} implies that 
			\begin{align}\label{order}
			\min_{k=0,\dots,K} &\tfrac{1}{\lambda^2}\mathbb{E}[\|\mathcal{P}_{\lambda f}(\hat{X}_{k})-\hat{X}_k\|^2] \leq 4\bar{\beta}\Big(\tfrac{(\bar f_\lambda  (\bold{X}_{0}) - \bar f_\lambda^*)}{\sqrt{K+1}} \notag\\
			+& \tfrac{3L^2(\sqrt{C}+\lambda)}{2\lambda \sqrt{K+1}}+ \tfrac{CL^2C_2(2L^2+\alpha^2 L_t^2)+10CL^2\alpha+4L^2C_2}{2\lambda \sqrt{K+1}}\notag\\
			+&\tfrac{4C\alpha \lambda L_tL^3\kappa_g}{\sqrt{K+1}} + \tfrac{5CL^2(2\rho+4\kappa_g+\tau)+4L^2\lambda(\rho+2\kappa_g)}{(K+1)} \Big)\notag\\
			\leq & \mathcal{O}\left((1+\kappa_g)(\tfrac{1}{\sqrt{K+1}}+\tfrac{1}{K+1})\right).
			\end{align}
			If $K$ is sufficiently large, then the convergence rate will be dominated by the term $\mathcal{O}\left(\frac{1+\kappa_g}{\sqrt{K+1}}\right)$.
		\end{proof}
		
		\begin{remark}
			(1). Theorem \ref{th-3} shows that the number of iterations, which is also called the computation complexity, is $\mathcal{O}(\frac{(1+\kappa_g)^2}{\epsilon^2})$. 
			Therefore, larger geodesic curvature leads to a deterioration of the convergence rate. Conversely, when $\kappa_g$ is small, the manifold is locally closer to its tangent spaces, and the resulting geometric effects on the algorithm become less pronounced, yielding improved convergence performance.
			
			(2). The terms involving $M_1,~M_2,~\kappa_g$ arise from the manifold geometry and quantify the influence of the Riemannian manifold on the convergence bound. When $\mathcal{M} = \mathbb{R}^n$, we have $M_1 = 1$, $M_2 = 0$ and $\kappa_g=0$, corresponding to the absence of geometric distortion. In this case, the convergence error bound is reduced, although the order of convergence remains $\mathcal{O}(\frac{1}{\sqrt{K+1}})$. 
			
			(3). When the cost functions are smooth, it holds that $\|\text{grad}f(\mathcal{P}_{\lambda f}(\hat X_k)\| \leq \frac{1}{\lambda^2}\mathbb{E}[\|\mathcal{P}_{\lambda f}(\hat{X}_{k})-\hat{X}_k\|^2] $. Together with \eqref{rec} in Theorem \ref{th-3}, this implies an iteration complexity of $\mathcal{O}(\frac{1}{\sqrt{K+1}})$ matching the convergence order of DPRGD \cite{deng2023decentralized}. At the same time, the proposed framework accommodates a broader class of weakly convex nonsmooth problems and explicitly reveals how the manifold geometry affects the convergence bound.
		\end{remark}

		\section{Numerical Experiments}\label{sec-5}
		In this section, we evaluate three proposed algorithms, namely, the distributed Riemannian stochastic subgradient (DR-SSG), the distributed Riemannian stochastic proximal point (DR-SPP), the distributed Riemannian stochastic proximal-linear (DR-SPL), and compare them with the existing approaches. In addition to experiments on the Stiefel manifold, we consider distributed blind deconvolution on the sphere, with results reported in \cite{zhao2025distributed} due to space limitations. We further investigate the effect of the geodesic curvature parameter $\kappa_g$ through a distributed generalized eigenvalue problem on the generalized Stiefel manifold. All computations are carried out using the {\it Manopt} toolbox \cite{manopt}.
		
		\subsection{Distributed Orthogonal Sparse Dictionary Learning}
		Dictionary learning problem derives from many machine learning applications, whose goal is to learn a suitable representation for the input dataset to turn it into sparse data. Denote the Stiefel manifold by $\text{St}(n,n):=\{X \in \mathbb{R}^{n \times n}:X^\top X = I_n\}$. Assuming that the observation data $Y$ can be approximated by $Y \approx A S$, where $A$ is the dictionary to be learned, and each column of $S$ is sparse. Consider the distributed orthogonal dictionary learning problem with $N$ agents 
		$\min_{X \in \text{St}(n,n)}~\frac{1}{N}\sum_{i=1}^N \left(\frac{1}{m_i}\sum_{j=1}^{m_i} \|y_{(i,j)}^\top X\|_1\right),$ where $m_i$ is the data size and $y_{(i,j)}$ denote the $j$-th column vector of the data $Y$ of the $i$-th agent. 
		
		We assume $N=100$ nodes working together to solve the problem, and each node has its local dataset $Y_i$. Three different communication networks are considered where the underlying graphs are ring graph, ER graph ($p=0.2$), and complete graph, respectively. Weights of the adjacency matrix are constructed by the Metropolis rule.
		
		We implement our proposed algorithms in the following.
		
		{\noindent\bf DR-SSG.}  We need to  evaluate the local subgradient of each agent by randomly selecting a data sample $\xi_i$ and calculating $\partial_R(f_i(X_i,\xi_i)) =\mathcal{P}_{T_{X_i}\text{St}(n,n)} (y_{(i,\xi_i)} \text{sgn}(y_{(i,\xi_i)}^\top X_i))$, where $\xi_i$ represents the data sample that we selected from $Y_i$.
		
		{\noindent\bf DR-SPP.} At every iteration $k$, each agent needs to solve:
		\begin{equation}\label{drspp}
		\min_{v \in T_{X_{i,k}} \text{St}(n,n)}~\|y_{(i,\xi_i)}^\top (X_{i,k}+v)\|_1 +\tfrac{\beta_k}{2}\|v\|^2.
		\end{equation}
		
		{\noindent\bf DR-SPL.} The distributed proximal linear algorithm also solves:
		$\min_{v \in T_{X_{i,k}} \text{St}(n,n)}~\| y_{(i,\xi_i)}^\top X_{i,k} + y_{(i,\xi_i)}^\top v \|_1+\frac{\beta_k}{2}\|v\|^2.$
		
		We can observe that the subproblems of DR-SPP and DR-SPL turn out to be the same in this problem. To resolve this, we turn the subproblem into a quadratic optimization problem with linear equality constraints. The detailed derivation is provided in Appendix D.
		
		\subsubsection{Synthetic data }
		The orthogonal dictionary matrix $A\in \text{St}(n,n)$ with $n=30$ is randomly generated from a normal distribution and the sparse matrix $S \in \mathbb{R}^{n \times m}$ by the Gaussian distribution with density $0.2$. Then the data matrix can be derived by $Y = AS$, where $m$ represents the size of the data and $m=10000$. The data are divided equally among the $N$ agents and we have $Y = [Y_1, Y_2, \dots, Y_N]$. Let the step size $\alpha = 1$ and $\beta_k=\frac{\sqrt{k+1}}{\beta_0}$. The retraction uses QR decomposition unless otherwise specified and the projection uses SVD. The initial points of every node are the same point randomly generated on the manifold. We measure the error to the target point by $err(\hat X, A) = \sum_{i=1}^n|\max_j|[\hat X_{[:,i]}^\top A]|_j-1|$.

		First,  we show the convergence performance of our algorithms with different $t$. Consider the ER graph. As illustrated in Fig. \ref{fig:t}, the theoretical lower bound for ER graph is approximately $4 \times 10^2$, while the convergence behaviors of DR-SSG, DR-SPP\&DR-SPL actually achieve similar performance when $t=1$, which implies that the theoretical bound of $t$ can be relaxed in practice. Thus, we simply set $t=1$ in the following.
		
		\begin{figure} [htbp]
			\centering
			\setlength{\abovecaptionskip}{0cm}
			\subfloat[DR-SSG, $\beta_0 = 0.1$]{ \includegraphics[width=0.49\columnwidth]{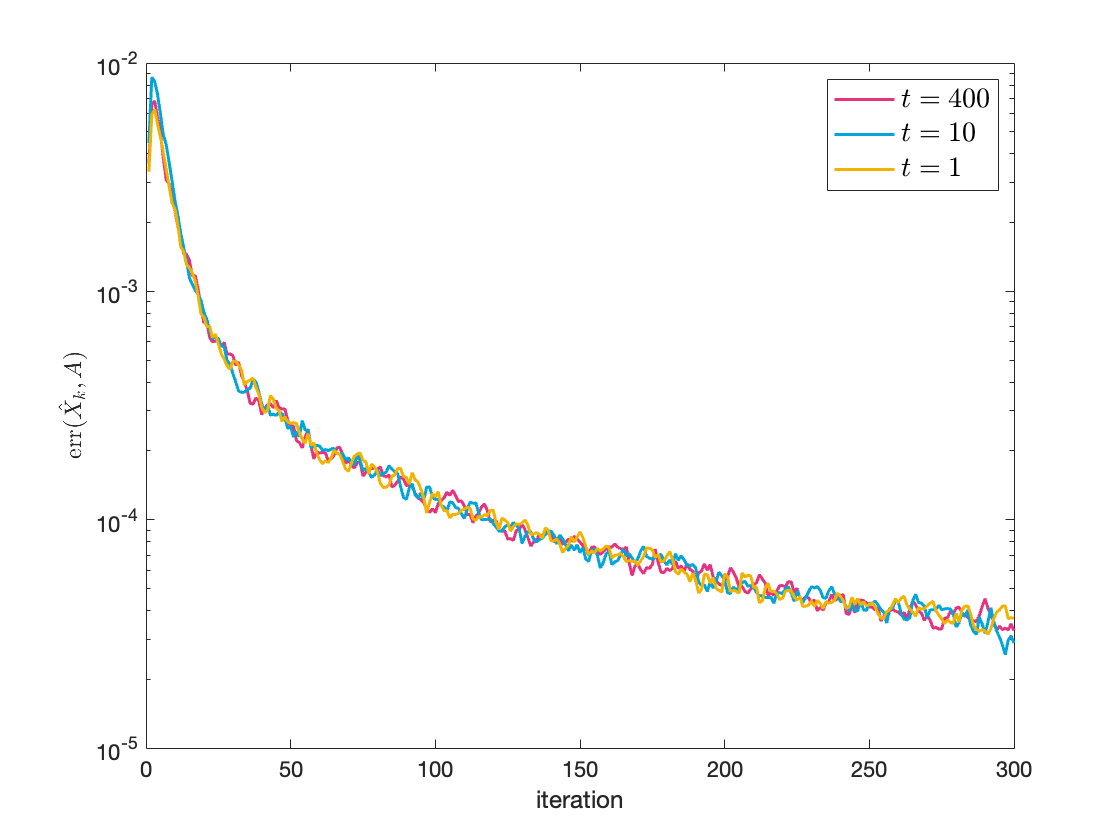} }
			\subfloat[DR-SPP\&DR-SPL, $\beta_0=0.01$]{ \includegraphics[width=0.49\columnwidth]{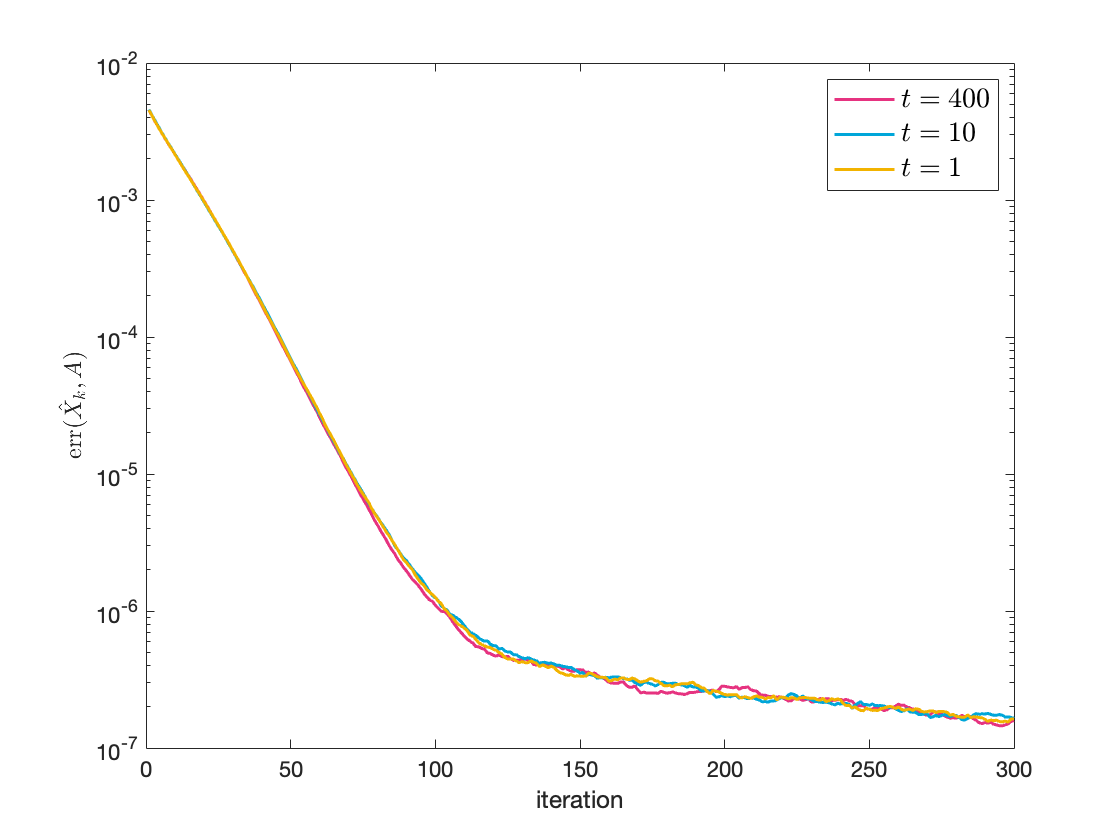} }
			\caption{The convergence performance with $t= 1, 10, 400$.}
			\label{fig:t}
		\end{figure}

		Then, we compare our algorithm with existing works, especially the projection-based algorithms in \cite{deng2023decentralized, wang2024proxtrack}. Note that the proposed DR-SSG is the stochastic version of \cite{wang2023} on $\text{St}(n,n)$. Since \cite{deng2023decentralized} is developed for smooth cost functions, we test a variant called DPSGD, by replacing the full gradient with a stochastic subgradient. We also run the projection counterpart of DR-SPP, labeled as DP-SPP, which is the stochastic version of \cite{wang2024proxtrack} with the smooth term $f(X) = 0$ and the nonsmooth term $r(X)$ known locally to each agent (since the gradient tracking step in DRProxGT tracks the gradient of $f(X)$, it vanishes here). The results are displayed in Fig. \ref{fig-1}. 
			
			\begin{figure*}[htbp]
				\centering
				\setlength{\abovecaptionskip}{0cm}
				\subfloat[Ring Graph]{ \includegraphics[width=0.24\linewidth]{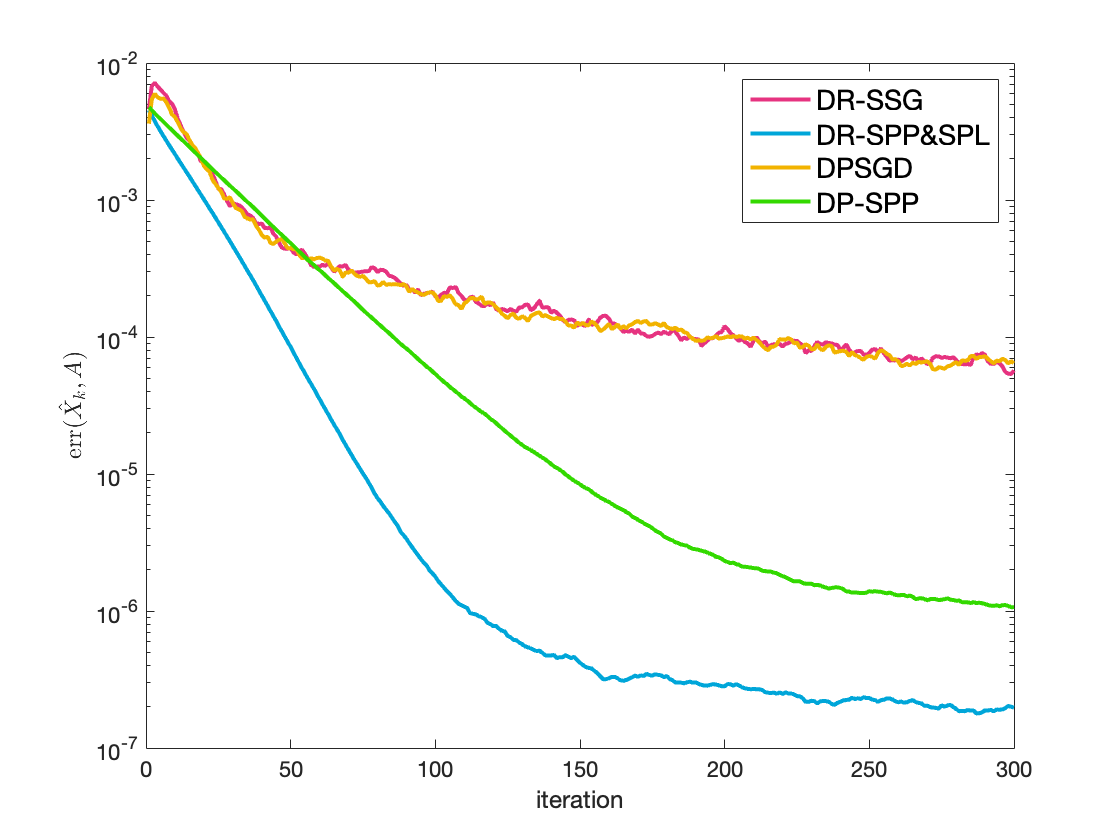} }
				\subfloat[ER Graph($p=0.2$)]{ \includegraphics[width=0.24\linewidth]{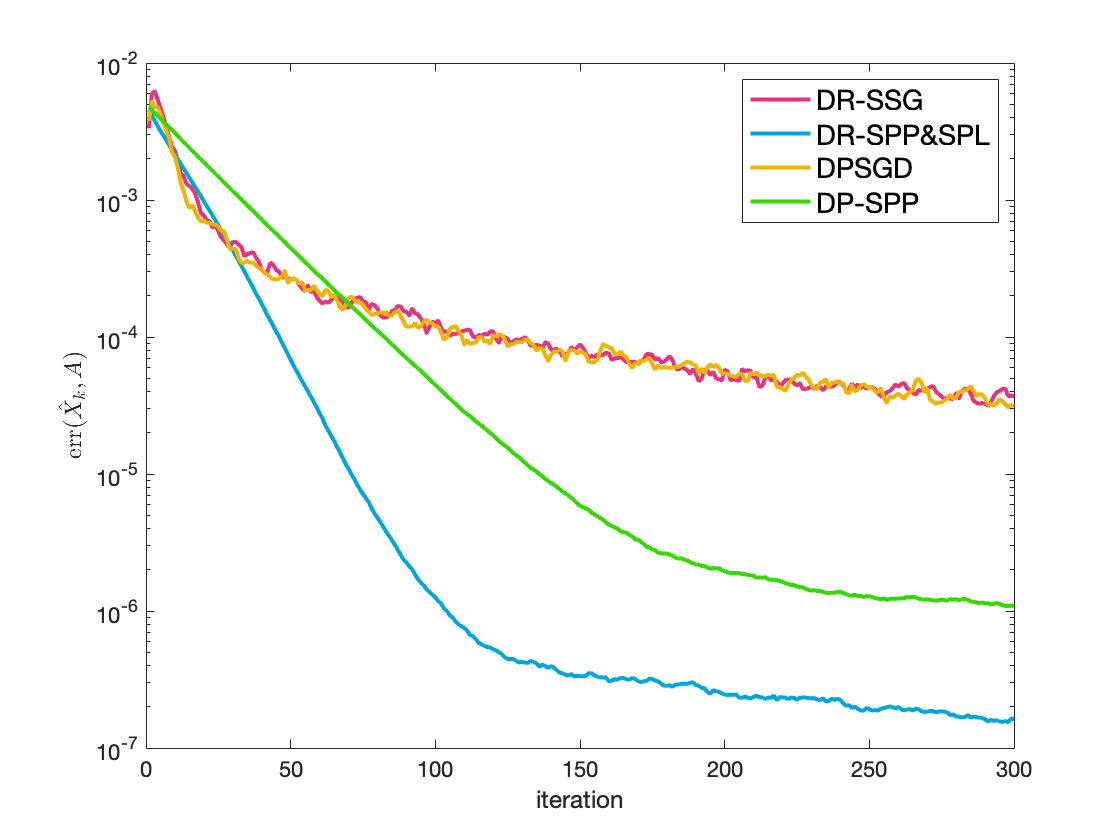} }
				\subfloat[Complete Graph]{ \includegraphics[width = 0.24\linewidth]{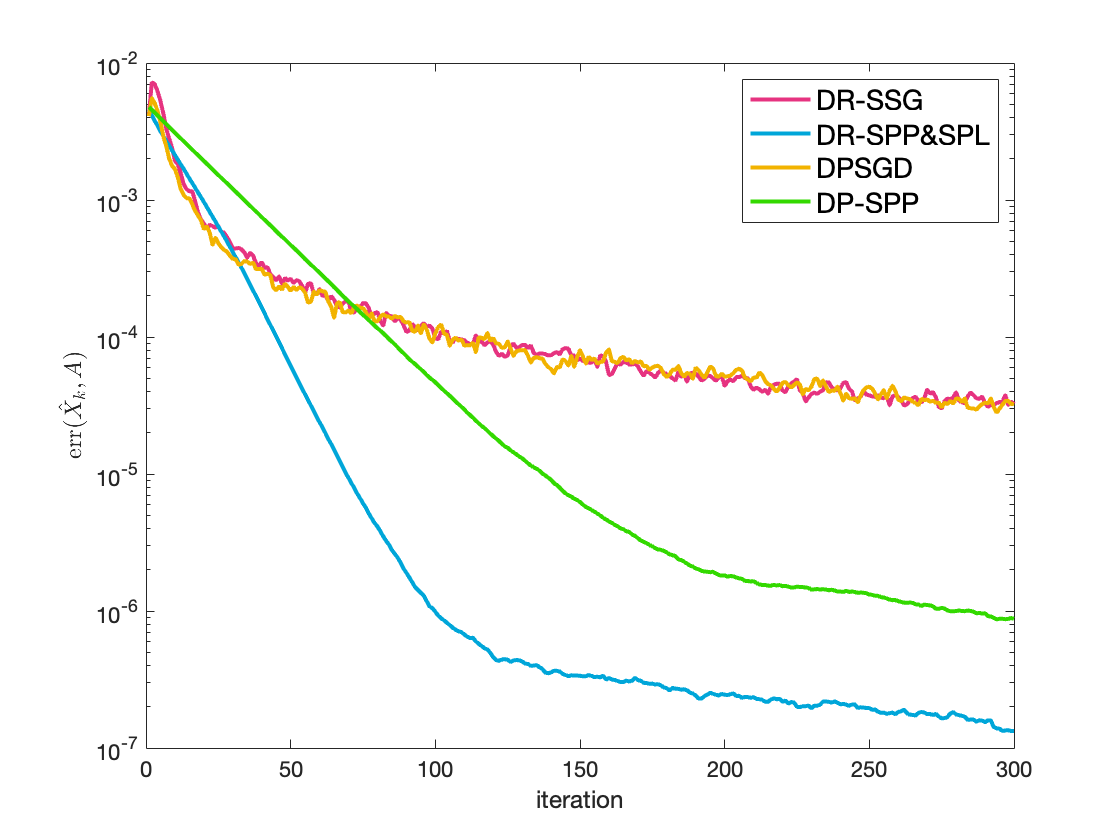} }
				\subfloat[Runtime]{ \includegraphics[width = 0.24\linewidth]{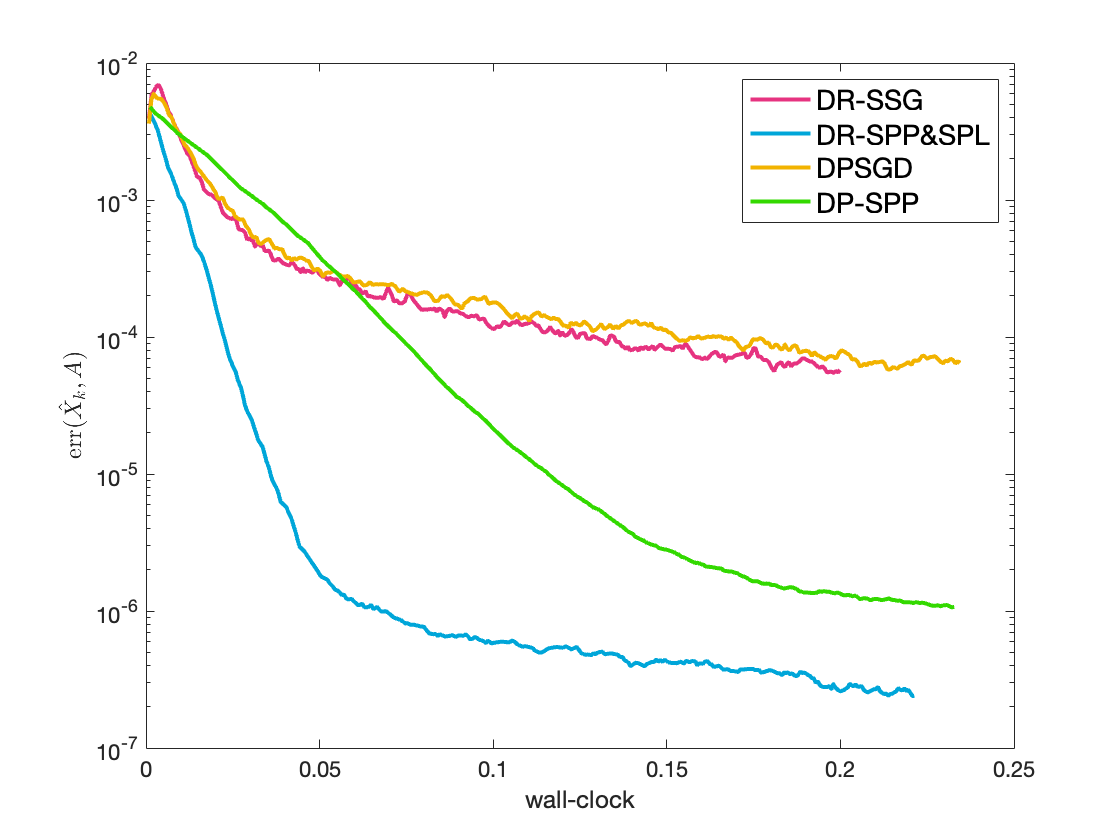} }
				\caption{The performance of the proposed algorithms, DPSGD, and DP-SPP under different communication graphs, where $\beta_0 = 0.1$ for DR-SSG and DPSGD, $\beta_0=0.01,~0.02$ for DR-SPP\&DR-SPL and DP-SPP respectively.}
				\label{fig-1}
			\end{figure*}
			
			Fig. \ref{fig-1}(a)-(c) illustrate that DR-SPP\&DR-SPL exhibits faster convergence rate and achieves lower recovery errors among all the algorithms. In particular, DR-SPP\&DR-SPL and DP-SPP continue to improve throughout the iterations, while DR-SSG and DPSGD plateau at approximately $10^{-4}$. Using retractions leads to a better convergence performance for the proximal-point methods as, while has little impact on subgradient algorithms DR-SSG and DPSGD. 
			
			\begin{table}[ht]
				\centering
				\setlength{\abovecaptionskip}{0cm}
				\caption{Comparison of the average wall-clock per iteration between four algorithms.}
				\begin{tabular}{lccc}
					\toprule
					\multirow{2}{*}{Algorithms}& \multicolumn{3}{c}{wall-clock per iteration (${\times 10^{-4}}$, unit:s)}  \\
					\cline{2-4}
					\rule{0pt}{2.6ex} &ring&ER&complete\\
					\midrule
					
					DR-SSG
					& 6.52 	& 6.82& 6.81\\
					\midrule
					
					DR-SPP\&DR-SPL
					& 6.75 	& 6.88& 6.90\\
					\midrule
					
					DPSGD
					& 7.82 	& 7.55 & 7.65\\
					\midrule
					
					DP-SPP
					& 7.96 	& 7.93 & 7.94\\
					\bottomrule
				\end{tabular}
				\label{tab:wall-clock}
			\end{table}
			
			To assess computational efficiency, Table~\ref{tab:wall-clock} shows the average wall-clock time per iteration and Fig. \ref{fig-1}(d) gives the wall-clock v.s. convergence. We can see that retraction-based methods require less computation per iteration than projection-based ones, demonstrating higher computational efficiency.
			Although DR-SPP\&DR-SPL and DP-SPP require more computation per iteration than DR-SSG and DPSGD as shown in Table~\ref{tab:wall-clock}, they achieve the target accuracy with fewer iterations and therefore provide better overall computational performance as illustrated in Fig.~\ref{fig-1}(d).
			
			We further compare the proposed algorithms using QR and polar retractions in Fig. \ref{fig-1}. As shown in Fig. \ref{fig-1}(a)-(c), the convergence behavior of both DR-SSG and DR-SPP is largely insensitive to the retraction employed. 
			Moreover, Fig.~\ref{fig-1}(d) shows that the QR retraction requires less wall-clock time while achieving comparable convergence accuracy, making it a more computationally efficient choice in practice.
			
			In conclusion, the proposed retraction-based algorithms offer higher efficiency making them particularly attractive for large-scale problems, and may further yield higher accuracy for certain types of algorithms.

		\subsubsection{Real-word data}
			We also test our algorithm on real-world dataset \href{https://www2.eecs.berkeley.edu/Research/Projects/CS/vision/grouping/resources.html?utm_source=chatgpt.com}{BSDS500} from \cite{amfm_pami2011} to see whether the complete dictionaries are reasonable sparsification bases for real images. 
			
			We generate the training dataset by extracting overlapping $8\times 8$ patches from the training images of the BSDS500 dataset. For each image, a fixed number of patches are randomly sampled, mean-centered individually, and concatenated to form the data matrix $Y \in \mathbb{R}^{64 \times m}$, where $m =10000$. The data matrix is normalized to have identical nonvanishing singular values and separated into each node. We set $t=1$, $\beta_k = \sqrt{k+1} \times 10^{-2}$ and the underlying graph to be ring graph.
			The results are shown in Fig. \ref{fig2}. 
			
			\begin{figure} [!h]
				\centering
				\setlength{\abovecaptionskip}{0cm}
				\subfloat{ \includegraphics[width=0.49\columnwidth]{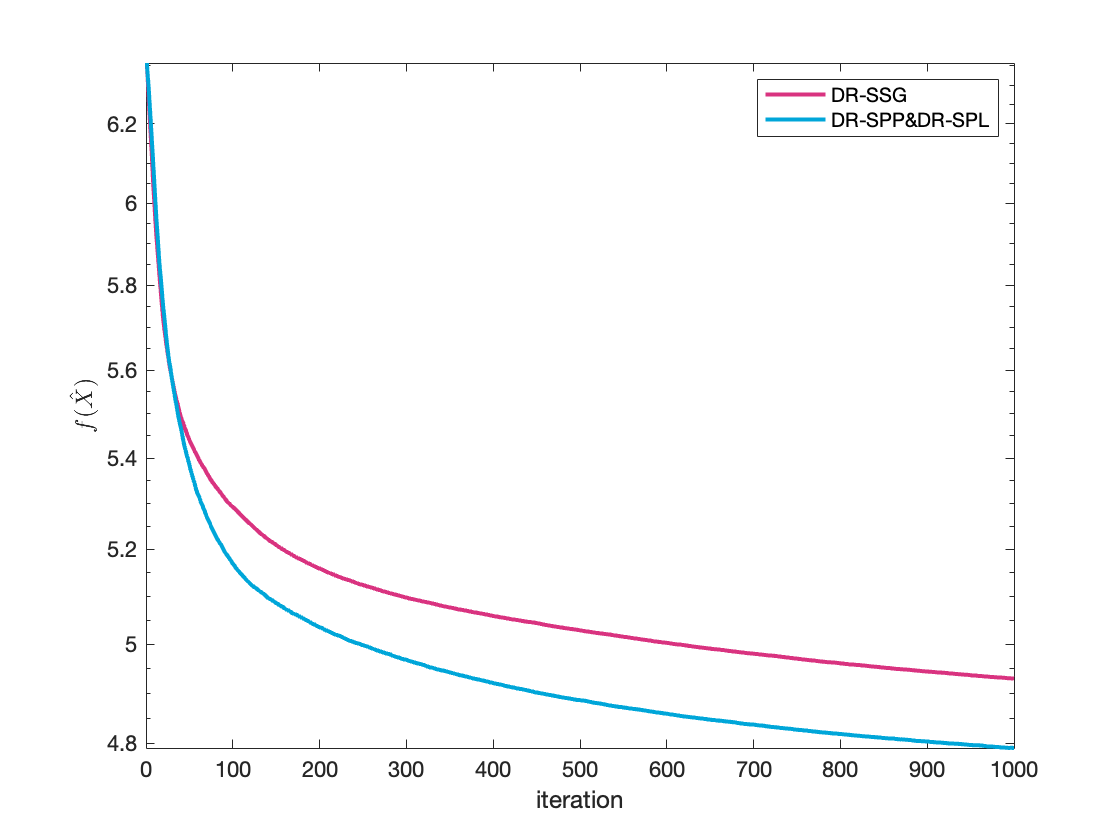} }
				\subfloat{ \includegraphics[width=0.50\columnwidth]{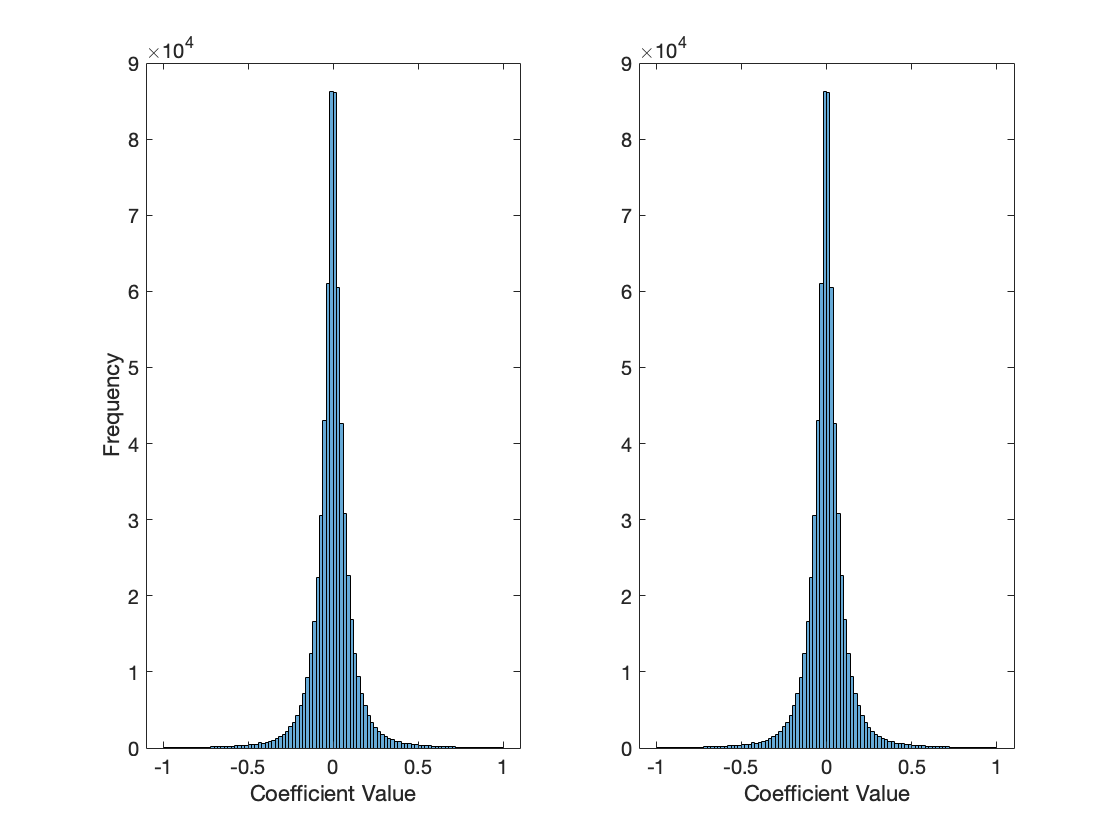} }
				\caption{Results on BSDS500 dataset. Left: Objective values; Right: Histograms of the representation coefficients.}
				\label{fig2}
			\end{figure} 
			
			By the figure of objective values, we can see that the DR-SPP (DR-SPL) algorithm performs better that DR-SSG.  Let $\hat X$ be the learned dictionary and we estimate the representation coefficients as $\hat X^{-1} Y$. The histograms of the representation coefficients of both algorithms are sharply
			concentrated around zero and presents a heavy tailed distribution, which is a good indication of sparsity. We further calculate the mean sparsity level of the coefficient's column vectors by $\frac{\|\cdot\|_1}{\|\cdot\|_2}$ \cite{bai2018subgradient} which ranges from $1$ to $\sqrt{n} = 8$ (fully dense). The sparsity levels of DR-SSG and DR-SPP\&DR-SPL are $4.98$ and $4.85$, respectively. To sum up, the learned dictionaries by our algorithms are reasonable sparsification bases for the dataset.
			
		\subsection{Distributed Generalized Eigenvalue Problem (GEP) on The Generalized Stiefel Manifold}
		We further show  the impact of geodesic curvature parameter $\kappa_g$ through the top-$p$ distributed generalized eigenvalue problem on generalized Stiefel manifold $St_B(n,p):=\{X|X^\top B X = I_p\}$ with positive-definite $B\in \mathbb{R}^{n\times n}$. It can be modeled as 
		$\min_{X\in St_B(n,p)} ~ -\frac{1}{2N}\sum_{i=1}^N tr(X^\top A_i^\top A_i X),$ where $A_i \in \mathbb{R}^{m_i \times n}$ is the local data matrix of agent $i$ and $m_i$ is the data size. Denote by $A:=[A_1^\top, \dots, A_N^\top]^\top$ the global data matrix. 
		According to Remark \ref{r-3}  and the Weingarten map of $St_B(n,p)$ \cite{shustin2023riemannian}, $\kappa_g = \mathcal{O} \left( \frac{\lambda_{\max}(B)^{3/2}}{\lambda_{\min}(B)} \right)$, which is positive related to the condition number of $B$. Since the cost function is smooth, the subgradient used in DR-SSG is actually the stochastic gradient, and DR-SPL algorithm degenerates to DR-SSG with $H_i(x) = x$ and $c_i = \tilde{F}_i$.
		
		We set $N = 10$ nodes and fix $m_1 = \dots = m_{10} = 1000$, $n = 10$ and  $p=5$. Data matrix $A$ is generated following a standard multivariate Gaussian distribution and randomly separated to each node.  The condition numbers of $B$ are set to be $10$ and $100$, respectively. We run both algorithms under ER graph ($p = 0.6$). Let the multi-steps of consensus $t=1$, and the step size $\alpha = 1$. The step size $\beta_k=\frac{\sqrt{k+1}}{0.5}$ for DR-SGG and $\beta_k=\sqrt{k+1}$ for DR-SPP. The initials of every node are the same and generated randomly. The results are illustrated in Fig. \ref{fig3}. It shows that a smaller $\kappa_g$ leads to a better convergence rate, which is consistent with the theoretical result. Besides, small geodesic curvature can make the algorithms more stable.
		\begin{figure} [!h]
			\centering
			\setlength{\abovecaptionskip}{0cm}
			\subfloat[DR-SSG\&DR-SPL]{ \includegraphics[width=0.49\columnwidth]{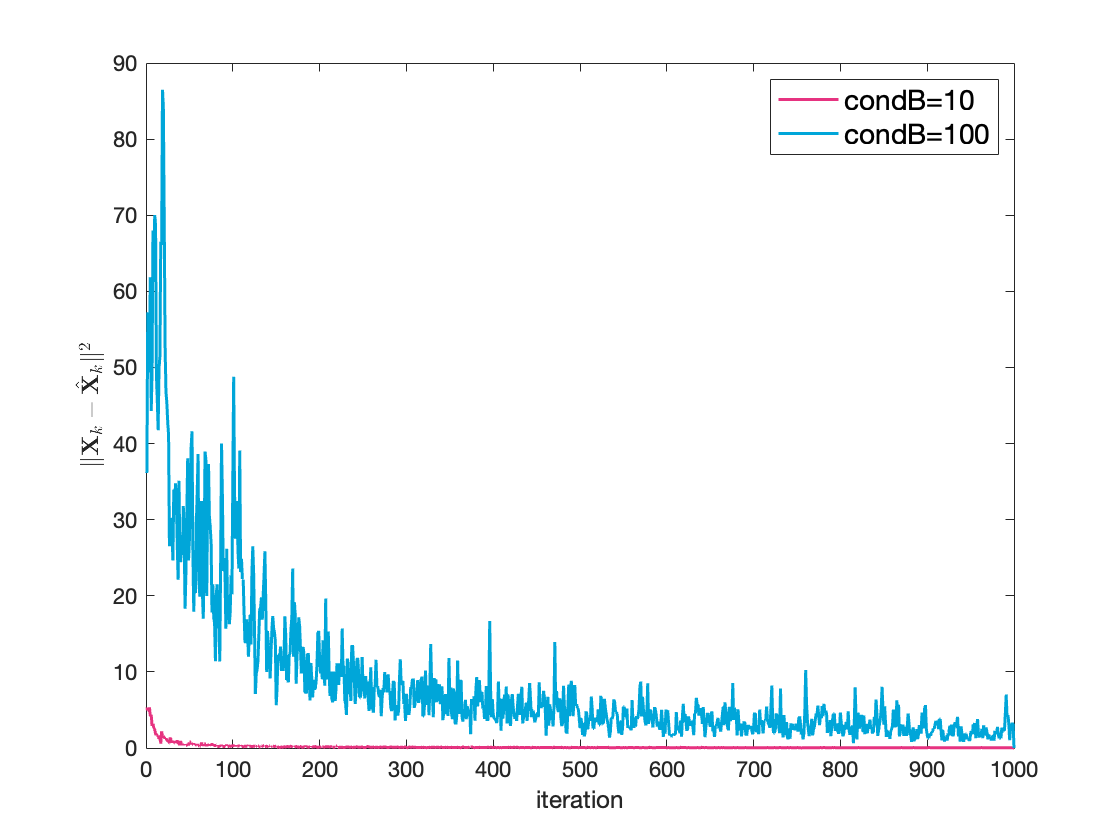} }
			\subfloat[DR-SPP]{ \includegraphics[width=0.49\columnwidth]{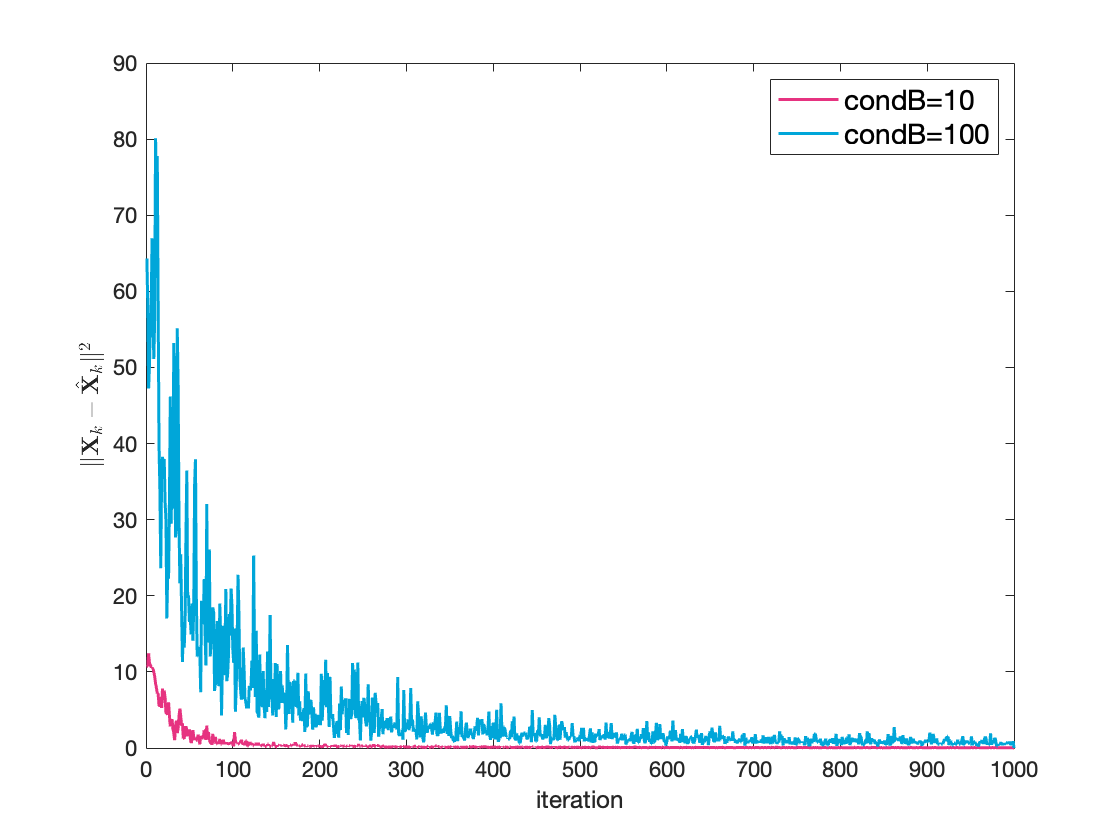} }
			\caption{Distributed generalized eigenvalue problem: The comparison of different $\kappa_g$ on DR-SSG\&DR-SPL and DR-SPP algorithms, respectively.}
			\label{fig3}
		\end{figure} 
		
		\section{Conclusion}\label{sec-6}
		We have proposed a distributed Riemannian stochastic proximal algorithm framework for distributed stochastic optimization problems on compact embedded submanifolds over connected networks. Using retractions, the iterates remain on the manifold and stay within a local region under suitable initialization. Under weakly-convex and Lipschitz continuous conditions on local cost functions and stochastic models, we prove consensus and convergence to a nearly stationary point in expectation with rate $\mathcal{O}((1+\kappa_g)/\sqrt{k})$, explicitly quantifying the influence of geodesic curvature. We believe that integrating gradient tracking into the proposed framework is a nontrivial extension to further accelerate convergence and relaxing the conservative conditions such as multi-step $t$ is also interesting, which we leave for future work.
		
		\section*{Appendix}
		\subsection{Proof of Lemma \ref{lem-lp}:}
		Suppose $x,y \in \bar{U}(d)$, by using \eqref{r_cur}, we have the following inequalities for $x$ and $y$ respectively.
		\[\langle \tfrac{x - \mathcal{P}|_\Omega (x)}{d(x,\mathcal{M})}, \mathcal{P}|_{\Omega}(y) - \mathcal{P}|_{\Omega}(x) \rangle \leq \tfrac{\kappa_g}{2} d_\mathcal{M}(\mathcal{P}|_{\Omega}(x),\mathcal{P}|_{\Omega}(y))^2, \] 
		\[\langle \tfrac{y - \mathcal{P}|_\Omega (y)}{d(y, \mathcal{M})}, \mathcal{P}|_{\Omega}(x) - \mathcal{P}|_{\Omega}(y) \rangle \leq \tfrac{\kappa_g}{2} d_\mathcal{M}(\mathcal{P}|_{\Omega}(x), \mathcal{P}|_{\Omega}(y))^2.\]
		Since $d(x,\mathcal{M}) \leq d$ and $d(y,\mathcal{M})\leq d$, summing the above two inequalities yields \begin{align}\label{k2}
		&\langle x - y + \mathcal{P}|_\Omega (y) -  \mathcal{P}|_\Omega (x), \mathcal{P}|_\Omega (y) -  \mathcal{P}|_\Omega (x) \rangle \notag\\
		\leq& (\kappa_g d) d_\mathcal{M}(\mathcal{P}|_{\Omega}(x),\mathcal{P}|_{\Omega}(y))^2.
		\end{align}
		Reformulating \eqref{k2} implies $
		\langle x - y , \mathcal{P}|_\Omega (y) -  \mathcal{P}|_\Omega (x) \rangle \leq  (\kappa_g d) d_{\mathcal{M}}(\mathcal{P}|_{\Omega}(x), \mathcal{P}|_{\Omega}(y))^2- \|\mathcal{P}|_\Omega (y) -  \mathcal{P}|_\Omega (x)\|^2 
		\leq (\kappa_g d-\frac{1}{2}) d_\mathcal{M}(\mathcal{P}|_{\Omega}(x), \mathcal{P}|_{\Omega}(y))^2,
		$where the last inequality follows by \eqref{egeqr}.
		
		Consequently, $d_\mathcal{M}(\mathcal{P}|_{\Omega}(x)- \mathcal{P}|_{\Omega}(y))^2\leq \frac{2}{1-2\kappa_g d}\|x-y\|d_\mathcal{M}(\mathcal{P}|_{\Omega}(x),\mathcal{P}|_{\Omega}(y)) $. 
		Thus, \eqref{lp} holds.   \hfill$\blacksquare$
		
		\subsection{Proof of Theorem \ref{th-1}:}
		
		We prove $\bold{X}_k \in \mathcal{S},~\forall k\geq 0$ by induction. This is satisfied when $k=0$ according to the initial condition. By the induction hypothesis, there exists a $k_0$, for every $0 < k \leq  k_0$, it holds $\bold{X}_{k} \in \mathcal{S} $.
		
		Then for $k=k_0+1$, we first show that $\bold{X}_{k_0+1} \in \mathcal{S}_2$. 
		Since $\beta_{k_0} \geq \frac{C_1L}{\delta_2\sqrt{1-\rho_t^2}}$, by Lemma \ref{c-e}, we have 
		$\|\bold{X}_{k_0+1} - \bold{\hat X}_{k_0+1}\|^2 \leq \rho_t^2\| \bm{X}_{k_0} - \bold{\hat X}_{k_0}\|^2 +\tfrac{NC_1^2L^2}{\beta_{k_0}^2}  \leq N\delta_2^2.$
		
		Next, we verify $\bold{X}_{k_0+1}\in \mathcal{S}_1$. Noticing that $\|X_{i,k_0+1} - \hat X_{k_0+1}\| \leq \|X_{i,k_0+1} - \hat X_{k_0}\|  + \|\hat X_{k_0} - \hat X_{k_0+1}\|$. For $\forall i \in \mathcal{M}$, utilizing \eqref{R-2} implies 
		$
		\|X_{i,k_0+1} - \hat X_{k_0}\|  \leq \|X_{i,k} -\alpha \text{grad}\ h_{i,t}(\bold{X}_k)+ v_{i,k_0} - \hat X_k\| + 4\kappa_g \alpha \delta_1^2+\tfrac{M_2L^2}{\beta_{k_0}^2}+\tfrac{ 4\kappa_g \delta_1L}{\beta_{k_0}}$.
		Since $t \geq \lceil \log_{\sigma_2}(\frac{1}{5\sqrt{N}})\rceil$, and by using Lemma \ref{ave} and Lemma \ref{nv}, we derive that 
		\begin{align*}
		&\| X_{i,k}-\alpha \text{grad}\ h_{i,t}(\bold{X}_{k_0})+ v_{i,k_0} - \hat{X}_{k_0}\| \\
		\leq& \|(1-\alpha)(X_{i,k_0}-\hat{X}_{k_0})+\alpha (\bar{X}_{k_0}- \hat{X}_{k_0})+\|v_{i,k_0}\|\\
		+&\alpha \sum_{j=1}^N W_{ij}^t(X_{j,k_0}-\bar X_{k_0})+\alpha \mathcal{P}_{N_{X_{i,k_0}}\mathcal{M}}(\nabla h_{i,t}(\bold{X}_{k_0}))\| \\
		\leq &(1-\alpha)\|X_{i,k_0}-\hat{X}_{k_0}\|+\alpha \|\bar{X}_{k_0}- \hat{X}_{k_0}\|+\|v_{i,k_0}\|\\
		+&\alpha \|\sum_{j=1}^N (W_{ij}^t - \frac{1}{N})X_{j,k_0}\|+\alpha \|\mathcal{P}_{N_{X_{i,k_0}}\mathcal{M}}(\nabla h_{i,t}(\bold{X}_{k_0}))\|\\
		\leq& (1-\frac{4}{5}\alpha)\delta_1 +\kappa_g \alpha \delta_2^2 +4\kappa_g \alpha\delta_1^2 + \frac{L}{\beta_{k_0}}.
		\end{align*}
		According to \eqref{R-2} and \eqref{e-iter}, it follows that
		\begin{align*}
		\|\bar X_{k_0} - \bar X_{k_0+1}\| \leq & \frac{M_2}{N}\sum_{i=1}^{N}\|\alpha\text{grad}\ h_{i,t}(\bold{X}_{k_0})-v_{i,k_0}\|^2 \\
		+& \alpha \|\frac{1}{N} \sum_{i=1}^{N}\text{grad}\ h_{i,t}(\bold{X}_{k_0})  \|+ \frac{1}{N}\sum_{i=1}^{N}\|v_{i,k_0}\|\\
		\leq& 2\alpha L_t(L_t\alpha M_2+\kappa_g) \delta_2^2+\tfrac{2M_2L^2}{\beta_{k_0}^2}+ \tfrac{L}{\beta_{k_0}}.
		\end{align*}
		Since $\alpha \leq \frac{\kappa_g}{M_2}$, $L_t \leq 2$ and $\beta_k \geq \frac{5L}{\alpha\delta_2}$, we finally have
		$\|X_{i,k_0+1} - \hat X_{k_0+1}\| \leq \|X_{i,k_0+1} - \hat X_{k_0}\|  + \|\hat X_{k_0} - \hat X_{k_0+1}\| \leq \delta_1.$
		
		Lastly, we show that $\bold{X}_{k_0+1} \in \mathcal{S}_3$. Since we have proved that $\bold{X}_{k_0+1}\in \mathcal{S}_1$, together with Corollary \ref{e-r ave} and $\alpha \leq 1$, the Riemannian distance between $\hat{X}_{k_0}$ and $\hat{X}_{k_0+1}$  satisfies
		\begin{equation}\label{s-6}
		d_{\mathcal{M}}(\hat{X}_{k_0}, \hat{X}_{k_0+1}) \leq 3\|\bar X_{k_0} - \bar X_{k_0+1}\| \leq \tfrac{906 \kappa_g \delta_2^2}{25}+\tfrac{3 \delta_2}{5}.
		\end{equation}
		By (\romannumeral3) of Lemma \ref{g} and Lemma \ref{b-v}
		\begin{equation}\label{s-7}
		d_{\mathcal{M}}(X_{i,k_0+1}, {X}_{i,k_0})\leq \|\mathcal{R}_{X_{i,k_0}}^{-1}(X_{i,k_0+1})\|\leq 2\delta_1 + \tfrac{ \delta_2}{5}
		\end{equation}
		By combining \eqref{s-6} and \eqref{s-7}, and together with $\bold{X}_{k_0}\in \mathcal{S}_3$, there holds \begin{align*}
		&d_{\mathcal{M}}(X_{i,k_0+1}, \hat{X}_{k_0+1}) = d_{\mathcal{M}}(X_{i,k_0+1}, {X}_{i,k_0})\\
		+&d_{\mathcal{M}}(X_{i,k_0}, \hat{X}_{k_0}) + d_{\mathcal{M}}(\hat{X}_{i,k_0}, \hat{X}_{k_0+1}) \leq \frac{1}{\kappa_g}
		\end{align*}
		According to \eqref{dist} and rearranging the terms, we have
		\begin{align*}
		&\|\text{Exp}^{-1}_{X_{i,k_0+1}}(\hat{X}_{k_0+1}) \|-\tfrac{\kappa_g}{2} \|\text{Exp}^{-1}_{X_{i,k_0+1}}(\hat{X}_{k_0+1})\|^2\\
		\leq& \| \hat{X}_{k_0+1}-X_{k_0+1}\|  \leq \delta_1,
		\end{align*}  which implies that 
		$d_{\mathcal{M}}(X_{i,k_0+1}, \hat{X}_{k_0+1}) = \|\text{Exp}^{-1}_{X_{i,k_0+1}}(\hat{X}_{k_0+1}) \| \leq \frac{1-\sqrt{1-2\kappa_g \delta_1}}{\kappa_g}\leq \frac{2-\sqrt{2}}{\kappa_g}$. \hfill$\blacksquare$
		
		\subsection{Proof of Theorem \ref{th-c}:}
		
		(\romannumeral1) is followed by using telescopic cancellation on \eqref{c-err}. 
		
		Then, we prove (\romannumeral2) by induction. Since $\lim_{k \rightarrow\infty} \frac{\beta_{k+1}}{\beta_k}=1$ according to Assumption \ref{beta}, there exists a sufficiently large $G\geq 0$ such that for $\forall k\geq G$, $\frac{\beta_{k+1}}{\beta_k} \leq 2.$ Denote the sequence $z_k:=\frac{\beta_k\| \bm{X}_k - \bold{\hat X}_{k}\|}{\sqrt{N}}$, $k\geq 0$. For $\forall k\geq 0$, we intend to show that $z_k \leq \mathcal{O}(L^2)$.
		
		When $k=0$, we have $\beta_0=\mathcal{O}(L)$ and $\frac{1}{N}\| \bm{X}_0 - \bold{\hat X}_{0}\|^2 \leq \delta_2^2$ which imply that $z_0^2 = \mathcal{O}(L^2)$. By the induction hypothesis, for $0\leq k \leq G$, there exists a constant $\tilde{C}$ ($\tilde{C}$ is independent of $L$), such that $z_k^2 \leq  \tilde{C} L^2.$
		
		For $k \geq G+1$, it follows from \eqref{c-err} that
		$z_{k+1}^2\leq \rho_t^2z_k^2+(C_1L)^2\left(\tfrac{\beta_{k+1}}{\beta_k}\right)^2 \leq (\rho_t^{2(k+1-G)} \tilde{C}+\tfrac{4C_1^2}{1-\rho_t^2})L^2
		\leq (\tilde{C}+\tfrac{4C_1^2}{1-\rho_t^2})L^2.$
		Let $C = (\tilde{C}+\frac{4C_1^2}{1-\rho_t^2}) = \mathcal{O}(\frac{1}{1-\rho_t^2})$, then we finish the proof.\hfill$\blacksquare$
		
		\subsection{Solution of the subproblems}
			{\bf Orthogonal Sparse Dictionary Learning:}
			Since $\text{St}(n,n)$ is an orthogonal group, by using the tangent-space parameterization $v=XU,~ U^\top=-U$, \eqref{drspp} can be rewritten as
			$
			\min_{U^\top=-U}~\|a+U^\top a\|_1+\tfrac{\beta}{2}\|U\|_F^2,
			$ where $a:=X_{i,k}^\top y_{(i,\xi_i)}$. Denote $u=U^\top a$. 
			The subproblem can be further reduced to 
			$min_{a^\top u = 0}\|a+u\|_1+\frac{\beta_k}{\|a\|^2}\|u\|^2,$
			whose dual problem can be solved by computing a monotone equation based on standard numerical solvers (eg. bisection, Newton method).
			
			For the Stiefel manifold $\text{St}(n,r)$ where $n \neq  r$, the subproblem can be solved following the adaptive Semi-smooth Newton method in \cite{xiao2018regularized, chen2020proximal}.

			{\bf GEP on Generalized Stiefel manifold:}
			Let $A_{\xi_i}$ denote the stochastic data sample and $M = A_{\xi_i}^\top A_{\xi_i} \succeq 0$.  The subproblem for DR-SPP is $min_{v} ~ -\frac{1}{2} tr((X_{i,k}^\top+v) M (X_{i,k}+v)) + \frac{\beta_k}{2}\|v\|^2$, s.t. $v^\top B X_{i,k}+X_{i,k}^\top B v=0.$ Vectoring $v$, we have
			$
			\min_{\mathbf v}~
			\tfrac{1}{2}\mathbf v^\top
			\left(\beta_k I-M\otimes I\right)\mathbf v
			-\operatorname{vec}\!\left(X_{i,k}M\right)^\top \mathbf v,~
			\text{s.t.}~
			(I+K)\left(I\otimes X_{i,k}^\top B\right)\mathbf v=0.
			$
			By calculating its KKT conditions, we obtain an equation and solve it by Newton method (or by projected conjugate gradient method for large-scale cases).

			{\bf Blind Deconvolution on Sphere:}
			The original subproblem of DR-SPP is $\min_{v\in T_{x_{i,k}} S^{n-1},y}~ |\langle a_i,(x_{i,k}+v) \rangle \langle c_i,y \rangle - b_i| +\frac{\beta_k}{2}\|v\|^2+ \frac{\beta_k}{2}\|y-y_{i,k}\|^2;$ and DR-SPL is 
			$\min_{(v_1,v_2) \in T_{x_{i,k}}S^{n-1} \times \mathcal{R}^m }~|\langle a_i,x_{i,k} \rangle \langle c_i,y_{i,k} \rangle + \langle c_i,y_{i,k} \rangle \langle a_i,v_1 \rangle+ \langle a_i,x_{i,k} \rangle \langle c_i,v_2 \rangle-b |+\frac{\beta_k}{2} \|v_1\|^2+\frac{\beta_k}{2} \|v_2\|^2.$
			
			The tangent space of sphere is $x^\top v = 0$. 
			Let $U$ denote the orthogonal basis of $T_{x}S^{n-1}$ that satisfies $U^\top x = 0,~U^\top U = I$, we have $v = Uz,~z\in \mathbb{R}^{n-1}$ which turns the subproblems into unconstrained optimization. DR-SPP is solved by alternating minimization where each block update admits a closed-form solution; and DR-SPL admits a closed-form evaluation.
		
		\section*{References}
		\bibliographystyle{plain}
		\bibliography{reference}
		
		\begin{IEEEbiography}[{\includegraphics[width=1in,height=1.25in,clip,keepaspectratio]{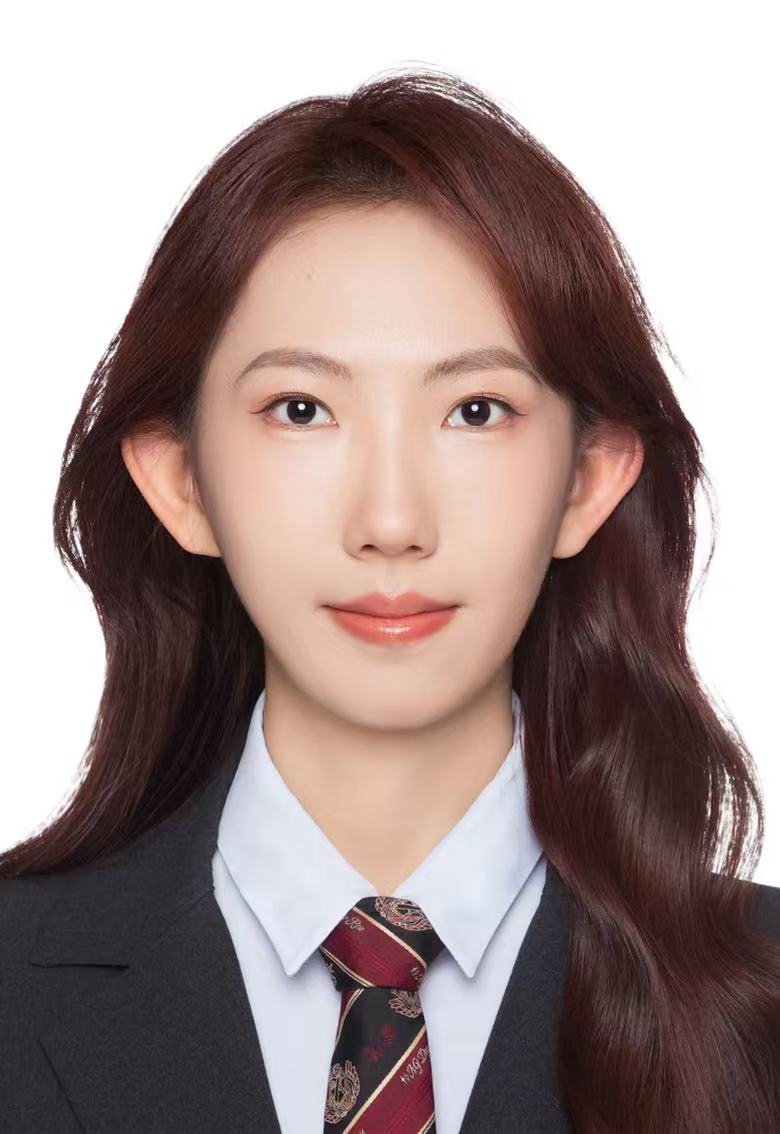}}]{Jishu Zhao} received the B.S. degree from Sichuan University in Mathematics, Sichuan, China, in 2021. She is currently working toward the Ph.D degree in control theory and control engineering with the Tongji University, Shanghai, China. Her research interests include game theory and distributed optimization on Riemannian manifolds.
			%
			%
		\end{IEEEbiography}
		
		\begin{IEEEbiography}[{\includegraphics[width=1in,height=1.25in,clip,keepaspectratio]{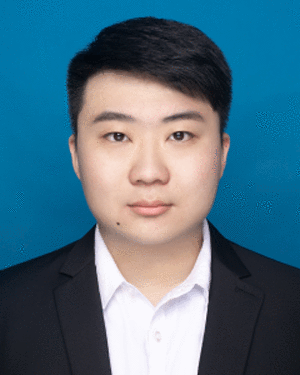}}]{Xi Wang} received the B.S. degree in mathematics and applied mathematics and the Ph.D. degree in operational research and cybernetics from the University of Chinese Academy of Sciences, Beijing, China, in 2018 and 2023, respectively.
			
			He is currently a Postdoctoral Researcher with the School of Electrical Engineering and Telecommunications, The University of New South Wales, Sydney, NSW, Australia. His research interests include online optimization, Riemannian optimization, and Lie group control.
		\end{IEEEbiography}
		
		\begin{IEEEbiography}[{\includegraphics[width=1in,height=1.25in,clip,keepaspectratio]{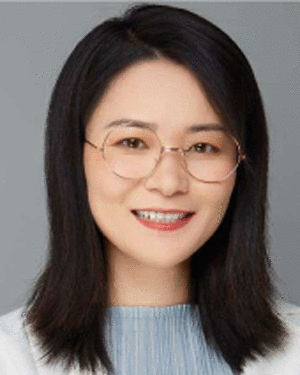}}]{Jinlong Lei} (Member, IEEE) received the B.E. degree in automation from the University of Science and Technology of China, Hefei, China, in 2011, and the Ph.D. degree in operations research and cybernetics from the Institute of Systems Science, Academy of Mathematics and Systems Science, Chinese Academy of Sciences, Beijing, China, in 2016. 
			
			She is currently a Professor with the Department of Control Science and Engineering, Tongji University, Shanghai, China. She was a Postdoctoral Fellow with the Department of Industrial and Manufacturing Engineering, Pennsylvania State University, University Park, PA, USA, from 2016 to 2019. Her research interests include stochastic approximation, stochastic and distributed optimization, and network games under uncertainty.
		\end{IEEEbiography}
		
		\begin{IEEEbiography}[{\includegraphics[width=1in,height=1.25in,clip,keepaspectratio]{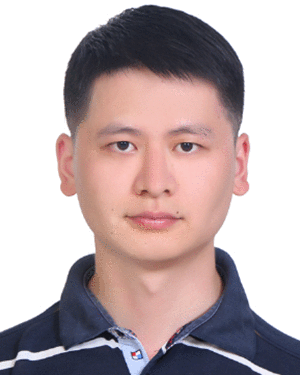}}]{Shixiang Chen} received the PhD degree in systems engineering and engineering management from The Chinese University of Hong Kong, in 2019. 
			
			He is an assistant professor with the School of Mathematical Sciences, University of Science and Technology of China. He was a postdoctoral associate with the Department of Industrial \& Systems Engineering, Texas A\&M University. His current research interests include design and analysis of optimization algorithms, and their applications in machine learning and signal processing.
		\end{IEEEbiography}

\newpage

\section{Supplementary: Additional Proof of Lemmas}
\subsection{Proof of Lemma \ref{ave}:}

For each $i$, there exists a unique geodesic $\gamma_i:[0,1] \mapsto \mathcal{M}$ connecting $X_i$ and $\hat{X}$. We denote the tangent vector along each $\gamma_i(t)$ as $\eta_i(t)$, then it implies
\begin{equation}\label{a-3}
X_i - \hat{X} =\int_{0}^{1}\eta_i(t) dt,~\forall i \in \mathcal{N}.
\end{equation}
Utilizing this result yields $\|\hat{X} - \bar{X}\|^2 = \langle \hat{X} - \bar{X},\hat{X} - \bar{X} \rangle 
=  \frac{1}{N} \sum_{i=1}^{N}\langle \hat{X} - \bar{X},\int_{0}^{1}\eta_i(t) dt \rangle.$

By the first-order optimal condition of \eqref{iam}, we have $\hat{X} - \bar{X}\in \mathcal{N}_{\hat{X}}\mathcal{M}$, where $\mathcal{N}_{\hat{X}}\mathcal{M}$ is the normal cone at $\hat X$.  Thus, for $\forall \eta_i \in T_{\hat{X}}\mathcal{M}$, there holds 
\begin{equation}\label{nc}
\langle \eta_i, \hat{X} - \bar{X}\rangle \leq 0.
\end{equation}
Since $\eta_i(0) \in T_{\hat{X}}\mathcal{M}$, together with \eqref{nc}, we derive that $\|\hat{X} - \bar{X}\|^2 \leq  \frac{1}{N} \sum_{i=1}^{N}\langle \hat{X} - \bar{X},\int_{0}^{1}\eta_i(t) - \eta_i(0) dt \rangle,$
which further implies 
\begin{align}\label{a-2}
\|\hat{X} - \bar{X}\| \leq &\frac{1}{N} \sum\nolimits_{i=1}^{N} \int_{0}^{1}\left \|\eta_i(t) - \eta_i(0)\right\| dt \notag\\
\leq& \frac{1}{N} \sum\nolimits_{i=1}^{N} \int_{0}^{1} \int_{0}^{t} \left \| \nabla_{\eta_i}\eta_i(s)\right\| dsdt\notag\\
\leq & \frac{1}{N} \sum\nolimits_{i=1}^{N} \int_{0}^{1} \int_{0}^{t} \|\Pi(\eta_i(s), \eta_i(s)\|ds dt \\
\leq& \frac{\kappa_g}{2N}\sum\nolimits_{i=1}^{N}  \|\eta_i(0)\|^2.\notag
\end{align}

Following a similar statement in the proof of Lemma \ref{nv} (let $X = \hat X,~Y = X_i$), we have
$\|X_i - \hat{X} - \eta_i(0) \| \leq \frac{\kappa_g}{2}\|\eta_i(0)\|^2$. Since $\|\eta_i(0)\| = d_{\mathcal{M}} (X_i,\hat{X}) \leq D$, we further have $\|X_i - \hat{X}\|\geq \|\eta_i(0)\| (1-\frac{\kappa_g}{2}\|\eta_i(0)\|)\geq \frac{1}{\sqrt{2}}\|\eta_i(0)\|$.
By plugging this result into \eqref{a-2}, we prove the lemma. \hfill$\blacksquare$

\subsection{Proof of Lemma \ref{g}:}

We first prove (\romannumeral1). As the orthogonal projection is self-adjoint and idempotent, it holds that
\begin{align*}
&\|\mathcal{P}_{N_{X_i}\mathcal{M}}(\nabla h_{i,t}(\bold{X}))\|^2 \\
=&  \langle \mathcal{P}_{N_{X_i}\mathcal{M}}(\nabla h_{i,t}(\bold{X})), \mathcal{P}_{N_{X_i}\mathcal{M}}(\nabla h_{i,t}(\bold{X})) \rangle \\
= & \langle \mathcal{P}_{N_{X_i}\mathcal{M}}(\nabla h_{i,t}(\bold{X})), \nabla h_{i,t}(\bold{X}) \rangle,
\end{align*}
and using \eqref{cur} implies \begin{align*}
&\|\mathcal{P}_{N_{X_i}\mathcal{M}}(\nabla h_{i,t}(\bold{X}))\|^2 \\
= &\sum_{j=1}^{N} W_{ij}^t\langle -\mathcal{P}_{N_{X_i}\mathcal{M}}(\nabla h_{i,t}(\bold{X})), X_j - X_i \rangle\\
\leq& \kappa_g\|\mathcal{P}_{N_{X_i}\mathcal{M}}(\nabla h_{i,t}(\bold{X}))\|\sum_{j=1}^{N} W_{ij}^t\|X_i - X_j\|^2.
\end{align*}
The above result shows \begin{equation}\label{nor-h}
\|\mathcal{P}_{N_{X_i}\mathcal{M}}(\nabla h_{i,t}(\bold{X}))\| \leq \kappa_g \sum_{j=1}^{N} W_{ij}^t\|X_i - X_j\|^2.
\end{equation}

Since $\sum_{i=1}^{N} \nabla h_{i,t}(\bold{X}) = \sum_{i=1}^N(X_i - \sum_{j=1}^{N}W_{ij}^t X_j) = 0$, we have 
\begin{align}\label{p_n}
\|\sum_{i=1}^N \text{grad} \ h_{i,t}(\bold{X})\| \leq & \sum_{i=1}^{N}\|\mathcal{P}_{N_{X_i}\mathcal{M}}(\nabla h_{i,t}(\bold{X}))\| \notag\\
\leq &  \kappa_g \sum_{i=1}^{N}\sum_{j=1}^{N} W_{ij}^t\|X_i - X_j\|^2\\
 = &4\kappa_g h_t(\bold{X}).\notag	
\end{align}

For any $\bold{x}, \bold{y} \in \mathcal{M}^N$, there holds\cite{nesterov2013}
\begin{equation}\label{e_l}
h_t(\bold{y}) - [h_t(\bold{x})+\langle \nabla h_t(\bold{x}),\bold{y} - \bold{x}\rangle] \leq \frac{L_t}{2}\|\bold{y} - \bold{x}\|^2,
\end{equation} where $L_t$ is the largest eigenvalue of $\nabla^2 h_t(\bold{x}) = (I_N - W^t) \otimes I_n$ in Euclidean space and $L_t = 1-\lambda_N(W^t)$, where $\lambda_N(W^t)$ denotes the smallest eigenvalue of $W^t$.
Let $\bold{x}=\hat{\bold{X}},~\bold{y}=\bold{X}$ in \eqref{e_l} and note that $\nabla h_t(\hat{\bold{X}}) = 0$. We get \begin{equation}\label{g_x}
2h_t(\bold{X}) \leq L_t\|\bold{X} - \hat{\bold{X}}\|^2,
\end{equation}

By combing \eqref{p_n} and \eqref{g_x}, we prove (\romannumeral1).

Next, we consider to prove (\romannumeral2). In the Euclidean space $\mathbb{R}^n$, based on \eqref{e_l}, we have  \begin{align*}
0 \leq &h_t(\bold{X} - \frac{1}{L_t} \nabla h_t(\bold{X})) \\
\leq & h_t(\bold{X})+\langle \nabla h_t(\bold{X}), -\frac{1}{L_t}\nabla h_t(\bold{X}) \rangle
+\frac{1}{2L_t}\|\nabla h_t(\bold{X})\|^2 \\
= &h_t(\bold{X}) - \frac{1}{2L_t}\|\nabla h_t(\bold{X})\|^2,
\end{align*} which directly shows that\begin{equation}
h_t(\bold{X}) \geq \frac{1}{2L_t}\|\nabla h_t(\bold{X})\|^2.
\end{equation}Since $\text{grad}\ h_{i,t}(\bold{X}) = \mathcal{P}_{T_{X_i}M} (\nabla h_{i,t}(\bold{X}))$, we derive 
\begin{equation}\label{h_t}
\|\text{grad}\ h_t(\bm{X})\|^2 \leq \|\nabla h_t(\bm{X})\|^2 \leq 2L_t h_t(\bold{X})
\end{equation}

By combing \eqref{g_x} and \eqref{h_t}, it follows that (\romannumeral2).

Finally, for (\romannumeral3), since we also have $\bold{X} \in \mathcal{S}_1$, then it holds $\|\text{grad}\ h_{i,t}(\bm{X})\| \leq \|\sum_{j=1}^N W^t_{ij} (X_i - X_j)\| \leq 2\delta_1. $
\hfill$\blacksquare$

\subsection{Proof of Lemma \ref{rs}.} 
Based on the self-adjoint property of the orthogonal projecting operator, we get
\begin{align}\label{c-1}
&\langle  \text{grad}\ h_t(\bm{X}),  \bold{\hat X} - \bm{X} \rangle \notag\\
=& \langle  \nabla  h_t(\bm{X}), \mathcal{P}_{T_{\bold{X}}\mathcal{M}^N} (\bold{\hat X} - \bm{X}) \rangle \notag\\
=& \langle  \nabla  h_t(\bm{X}),  \bold{\hat X} - \bm{X} \rangle - \langle \nabla h_t(\bm{X}),  \mathcal{P}_{N_{\bold{X}}\mathcal{M}^N}(\bold{\hat X} - \bm{X}) \rangle .
\end{align}
It follows from $\nabla h_{i,t}(\bold{X}) = X_i-\sum_{j=1}^{N}W_{ij}^t X_j$ that 
\begin{align}\label{c-2}
&- \langle \nabla h_t(\bm{X}),  \mathcal{P}_{N_{\bold{X}}\mathcal{M}^N}(\bold{\hat X} - \bm{X}) \rangle \notag\\
= &\sum_{i=1}^N \sum_{j=1}^{N}W_{ij}^t \langle X_j- X_i,  \mathcal{P}_{N_{X_i}\mathcal{M}}(\hat X - X_i) \rangle \notag\\
\leq & \kappa_g \sum_{i=1}^N \sum_{j=1}^{N}W_{ij}^t \|\mathcal{P}_{N_{X_i}\mathcal{M}}(\hat X - X_i)\|\|X_i- X_j\|^2\\
\leq & 4\kappa_g \max_{i}\|X_i - \hat X\| h_t(\bold{X}).\notag
\end{align}where the first inequality holds according to \eqref{cur}. 

We then focus on bound the term $\langle \nabla h_t(\bm{X}_k), \bm{X}_k - \bold{\hat X}_{k} \rangle$. For any $\bold{X}\in \mathcal{M}^N$, the potential function of consensus can be rewritten as  $ 2h_t(\bm{X}) = \sum_{i=1}^{N}\|X_{i}\|^2 - \sum_{i=1}^{N}\sum_{j=1}^{N}W^t_{ij}\langle X_{i},X_j \rangle = \langle \nabla h_t(\bm{X}), \bm{X} \rangle.$ Since we also have $\langle \nabla h_t(\bm{X}),  \bold{\hat X} \rangle = 0$, there holds
\begin{equation}\label{c-3}
\langle \nabla h_t(\bm{X}), \bm{X} - \bold{\hat X} \rangle = 2h_t(\bm{X}).
\end{equation}
Substituting \eqref{c-2} and \eqref{c-3} into \eqref{c-1}, we have \begin{align}\label{c-4}
&\langle  \text{grad}\ h_t(\bm{X}),  \bm{X} - \bold{\hat X} \rangle \notag\\
\geq& 2h_t(\bm{X}) - 4\kappa_g \max_{i}\|X_i - \hat X\| h_t(\bold{X}) \\
=& h_t(\bm{X})\cdot (2-4\kappa_g \max_{i}\|X_i - \hat X\|).\notag
\end{align} 

Let $\Phi_{\kappa_g}: = 2-4\kappa_g\max_{i}\|X_i - \hat X\|$. By plugging \eqref{h_t} into \eqref{c-4}, we get $ \langle  \text{grad}\ h_t(\bm{X}),  \bm{X} - \bold{\hat X} \rangle \geq \frac{\Phi_{\kappa_g}}{2L_t}\|\nabla h_t(\bold{X})\|^2 \geq \frac{\Phi_{\kappa_g}}{2L_t} \|\text{grad}\ h_t(\bold{X})\|^2.$ Thus, the lemma is proved.
\hfill$\blacksquare$

\subsection{Proof of Lemma \ref{R-sub}.} 
Since $\phi $ is weakly-convex, and by \eqref{subd}, we obtain \begin{align}\label{l-1}
\phi(Y) - \phi(X) \geq& \langle \tilde{\nabla}\phi(X), Y-X \rangle - \frac{\rho}{2}\|Y-X\|^2 \\
=& \langle g_X, Y-X \rangle + \langle \mathcal{P}_{N_X\mathcal{M}} (\tilde{\nabla}\phi(X)), Y-X \rangle\notag\\
-& \frac{\rho}{2}\|Y-X\|^2.\notag
\end{align} 
Utilizing \eqref{cur} yields \begin{align*}
&\langle \mathcal{P}_{N_X\mathcal{M}} (\tilde{\nabla}\phi(X)), Y-X \rangle\\
 \geq& -\kappa_g\|\mathcal{P}_{N_X\mathcal{M}} (\tilde{\nabla}\phi(X))\|\|Y-X\|^2\\
\geq  &-\kappa_g\|\tilde{\nabla}\phi(X)\|\|Y-X\|^2.
\end{align*} 
According to \cite[Th. 9.13]{2009variational} and since $\phi$ is assumed to be $L$-Lipschitz continuous, we have 
\begin{align}\label{l-2}
\langle \mathcal{P}_{N_X\mathcal{M}} (\tilde{\nabla}\phi(X)), Y-X \rangle \geq -\kappa_g L\|Y-X\|^2.
\end{align}
By combining \eqref{l-1} and \eqref{l-2}, the lemma is proved.
\hfill$\blacksquare$

\subsection{Proof of Lemma \ref{P-lip}}
We first introduce a technical lemma.
\begin{lemma}
	\cite{davis2020} If a function $\phi$ is assumed to be $L$-Lipschitz continuous on $\mathcal{M}$ and $\rho$-weakly convex. Let $\beta \geq \rho+\frac{3L}{R}$. Fix a point $X\in \mathcal{M}$ and define the proximal point $\tilde{X}:=\arg \min_{Y \in \mathcal{M}}\phi(Y)+\frac{\beta}{2}\|X-Y\|^2$. Then  the following  holds.
	\begin{align}\label{3point}
	\phi(Y) - \phi(\tilde{X})\leq& \frac{\beta-\rho -\frac{3L}{R}}{2}\|Y-\tilde{X}\|^2 + \frac{\beta}{2}\|X-\tilde{X}\|^2\notag\\
	-&\frac{\beta}{2}\|Y-X\|^2,\quad \forall Y \in \mathcal{M}.
	\end{align}
\end{lemma}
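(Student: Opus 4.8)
The plan is to prove the three-point relation \eqref{3point} from the first-order optimality condition of the constrained proximal subproblem, combined with the weak-convexity subgradient inequality and the normal-vector estimate of Lemma~\ref{nv}. First I would record the stationarity condition for $\tilde{X}=\arg\min_{Y\in\mathcal{M}}\{\phi(Y)+\frac{\beta}{2}\|Y-X\|^2\}$. Since the regularizer is smooth and, by \eqref{subd}, $\partial_R\phi(\tilde X)=\mathcal{P}_{T_{\tilde X}\mathcal{M}}[\partial\phi(\tilde X)]$, optimality reads $0\in\mathcal{P}_{T_{\tilde X}\mathcal{M}}[\partial\phi(\tilde X)+\beta(\tilde X-X)]$. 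Hence there is a Euclidean subgradient $\xi\in\partial\phi(\tilde X)$ for which the residual $w:=\xi+\beta(\tilde X-X)$ is a normal vector, $w\in N_{\tilde X}\mathcal{M}$.

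Next I would invoke weak convexity of $\phi$ in the ambient space: since $\phi+\frac{\rho}{2}\|\cdot\|^2$ is convex, the subgradient inequality at $\tilde X$ gives, for every $Y\in\mathcal{M}$,
\[
\phi(Y)\ge\phi(\tilde X)+\langle\xi,Y-\tilde X\rangle-\tfrac{\rho}{2}\|Y-\tilde X\|^2 .
\]
Substituting $\xi=w+\beta(X-\tilde X)$ splits the linear term into a normal part $\langle w,Y-\tilde X\rangle$ and a regularizer part $\beta\langle X-\tilde X,Y-\tilde X\rangle$. The latter I would expand by the polarization identity $\langle X-\tilde X,Y-\tilde X\rangle=\tfrac12(\|X-\tilde X\|^2+\|Y-\tilde X\|^2-\|Y-X\|^2)$, which immediately produces the $\frac{\beta}{2}\|X-\tilde X\|^2$, $\frac{\beta}{2}\|Y-\tilde X\|^2$, and $-\frac{\beta}{2}\|Y-X\|^2$ terms appearing on the right-hand side of \eqref{3point}.

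The decisive step is controlling the normal term $\langle w,Y-\tilde X\rangle$. Here I would apply Lemma~\ref{nv}, the manifold analogue of $R$-proximal smoothness under the identification $\kappa_g\leftrightarrow 1/R$: applied to $-w\in N_{\tilde X}\mathcal{M}$ it yields $\langle w,Y-\tilde X\rangle\ge-\frac{\|w\|}{2R}\|Y-\tilde X\|^2$. To turn this into the stated $-\frac{3L}{R}$ coefficient I need $\|w\|\le 3L$. The Lipschitz bound of Assumption~\ref{assu-3.1} gives $\|\xi\|\le L$, while comparing the subproblem value at $\tilde X$ and at $X$, $\frac{\beta}{2}\|\tilde X-X\|^2\le\phi(X)-\phi(\tilde X)\le L\|X-\tilde X\|$, yields the prox-displacement estimate $\beta\|\tilde X-X\|\le 2L$; therefore $\|w\|\le\|\xi\|+\beta\|\tilde X-X\|\le 3L$. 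Collecting the $\|Y-\tilde X\|^2$ contributions $\tfrac{\beta}{2}$, $-\tfrac{\rho}{2}$, and $-\tfrac{3L}{2R}$ then reproduces the coefficient $\tfrac{\beta-\rho-3L/R}{2}$, establishing the three-point inequality.

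I expect the normal term to be the main obstacle, since it is the only genuinely manifold-specific ingredient: it requires both the curvature/reach estimate of Lemma~\ref{nv} and the separate uniform bound $\|w\|\le 3L$ on the normal residual, and these must be combined so that the correction stays proportional to $L/R$. The remaining pieces (the weak-convexity subgradient inequality, the polarization identity, and the displacement bound) are routine Euclidean estimates, so the whole argument hinges on translating proximal smoothness into a quadratic control of $\langle w,Y-\tilde X\rangle$.
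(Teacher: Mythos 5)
The paper never actually proves this lemma---it is imported verbatim from \cite{davis2020s}---so there is no in-paper proof to compare against; the relevant comparison is with the standard argument in that reference and with the paper's neighboring computations. Your skeleton is exactly that standard argument: stationarity of the constrained prox subproblem yields $\xi\in\partial\phi(\tilde X)$ with $w:=\xi+\beta(\tilde X-X)\in N_{\tilde X}\mathcal{M}$; the ambient weak-convexity subgradient inequality plus polarization generate the three quadratic terms; and your bound $\|w\|\le\|\xi\|+\beta\|\tilde X-X\|\le L+2L=3L$ is the same estimate the paper itself uses to get $\|p\|,\|q\|\le 3L$ inside the proof of Lemma \ref{P-lip}. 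One remark on direction: your chain produces $\phi(Y)-\phi(\tilde X)\ge \frac{\beta-\rho-3L/R}{2}\|Y-\tilde X\|^2+\frac{\beta}{2}\|X-\tilde X\|^2-\frac{\beta}{2}\|Y-X\|^2$, not the printed ``$\le$''. This is in your favor: ``$\ge$'' is the correct, citable three-point property (the printed ``$\le$'' version fails already for a linear $\phi$ restricted to a sphere), so the inequality sign in the paper's statement is evidently a misprint, and you have silently proved the version that is true and that the downstream analysis needs. You should, however, have flagged the discrepancy rather than claiming to reproduce \eqref{3point} as stated.

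The genuine gap is the normal-term step $\langle w,Y-\tilde X\rangle\ge-\frac{\|w\|}{2R}\|Y-\tilde X\|^2$ claimed \emph{for all} $Y\in\mathcal{M}$, which you attribute to Lemma \ref{nv} under the identification $\kappa_g\leftrightarrow 1/R$. Lemma \ref{nv} does not deliver this: its Euclidean-norm form \eqref{cur} holds only for $Y\in\Omega=\text{Exp}_{\tilde X}(B(0_{\tilde X},D))$ with $D=(2-\sqrt{2})/\kappa_g$ (and with constant $\kappa_g$ rather than $\kappa_g/2$, so even locally the identification costs a factor of $2$, i.e.\ $2\kappa_g$ plays the role of $1/R$), while the globally valid form \eqref{r_cur} is stated in terms of $d_\mathcal{M}(\tilde X,Y)^2\ge\|Y-\tilde X\|^2$, which bounds in the wrong direction. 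More fundamentally, the $R$ in the lemma is the proximal-smoothness (reach) constant of $\mathcal{M}$, a global quantity that a bound $\|\Pi\|_{op}\le\kappa_g$ on the second fundamental form does not control: a submanifold with bounded geodesic curvature can have arbitrarily small reach because two geodesically distant sheets pass close to each other in the ambient space, and for such far-away $Y$ the inequality with $1/R=\kappa_g$ is simply false. The correct justification of the global normal inequality is the Clarke--Stern--Wolenski hypomonotonicity characterization of $R$-proximally smooth sets (every compact $C^2$ submanifold is $R$-proximally smooth for some $R>0$), which is exactly what \cite{davis2020s} invokes. With that result cited in place of Lemma \ref{nv}, your argument is complete; as written, it establishes \eqref{3point} only for $Y$ in the local region $\Omega$ around $\tilde X$.
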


$P_{\lambda f}(X)$ is single-valued can be easily derived since $f$ is weakly-convex function. 

Next, we prove the Lipschitz-type inequality. 
Denote $\tilde{X}:= P_{\lambda f}(X)$ and $\tilde{Y}:= P_{\lambda f}(Y)$. By the optimality of the proximal map, there exist $g_X \in \partial f(\tilde{X})$ and $p\in N_\mathcal{M}(\tilde{X})$ such that $X-\tilde{X} = \lambda(g_X+p)$. Similarly, there exist $g_Y \in \partial f(\tilde{Y})$ and $q\in N_\mathcal{M}(\tilde{Y})$ such that $Y-\tilde{Y} = \lambda(g_Y+q)$. Subsequently, since $f$ is $\rho$-weakly-convex, by using \eqref{3point} and \eqref{cur}, we have \begin{align}\label{1}
&\langle X-Y, \tilde{X}-\tilde{Y} \rangle \notag\\
 = &\langle \lambda(g_X+p)+\tilde{X} - \lambda(g_Y+q)-\tilde{Y} , \tilde{X}-\tilde{Y} \rangle\notag\\
=&\|\tilde{X}-\tilde{Y}\|^2+\lambda \langle g_X - g_Y, \tilde{X}-\tilde{Y} \rangle + \langle p-q, \tilde{X}-\tilde{Y} \rangle\\
\geq & (1-\lambda \rho)\|\tilde{X}-\tilde{Y}\|^2 -\lambda \kappa_g(\|p\|+\|q\|)\|\tilde{X}-\tilde{Y}\|^2.\notag
\end{align}
Since $f$ is $L$-Lipschitz continuous, by the definition of $P_{\lambda f}(X)$ we directly derive $\|X - \tilde{X}\| \leq 2L\lambda$. Based on this we can derive that $\|p\| \leq \frac{1}{\lambda}\|X-\tilde{X}\|+\|g_X\| \leq 3L$. Similarly, $\|q\|\leq 3L$. Plugging them into \eqref{1} yields \[\|\tilde{X}-\tilde{Y}\|\cdot\|X-Y\| \geq (1-\lambda(\rho+6\kappa_g L))\|\tilde{X}-\tilde{Y}\|^2.\] 
As $\lambda \in (0, \frac{1}{\rho+6\kappa_g L})$, Lemma \ref{P-lip} is proved.\hfill$\blacksquare$

\subsection{Distributed Blind Deconvolution on Sphere}
\begin{figure*}[htbp]
	\centering
	\setlength{\abovecaptionskip}{0cm}
	\subfloat[Ring Graph]{ \includegraphics[width=0.3\linewidth]{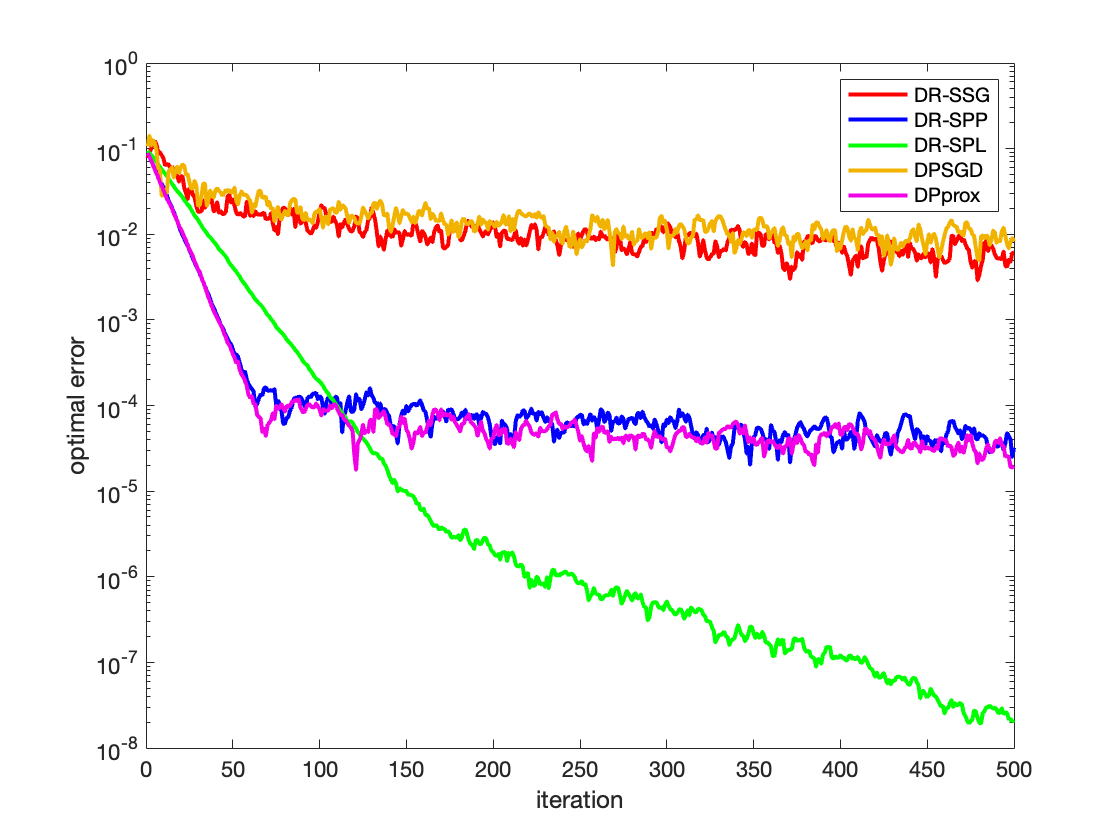} }
	\subfloat[ER Graph ($p=0.2$)]{ \includegraphics[width=0.3\linewidth]{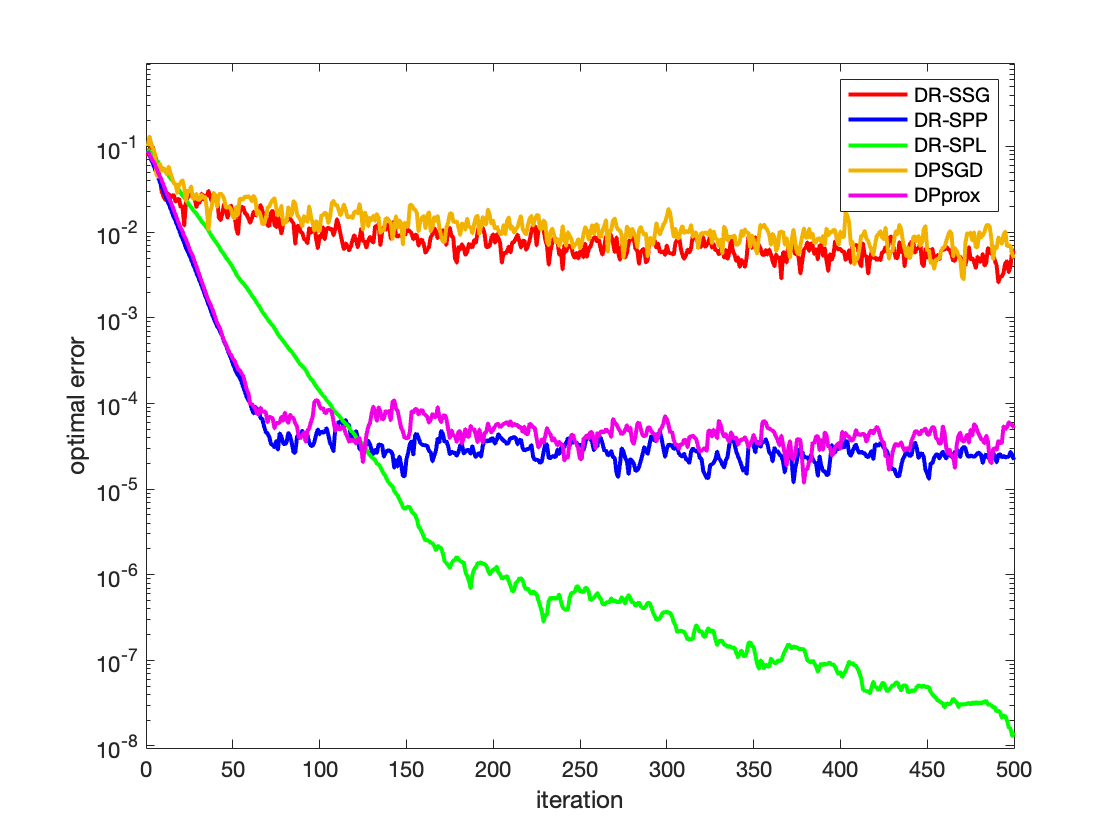} }
	\subfloat[Complete Graph]{ \includegraphics[width = 0.3\linewidth]{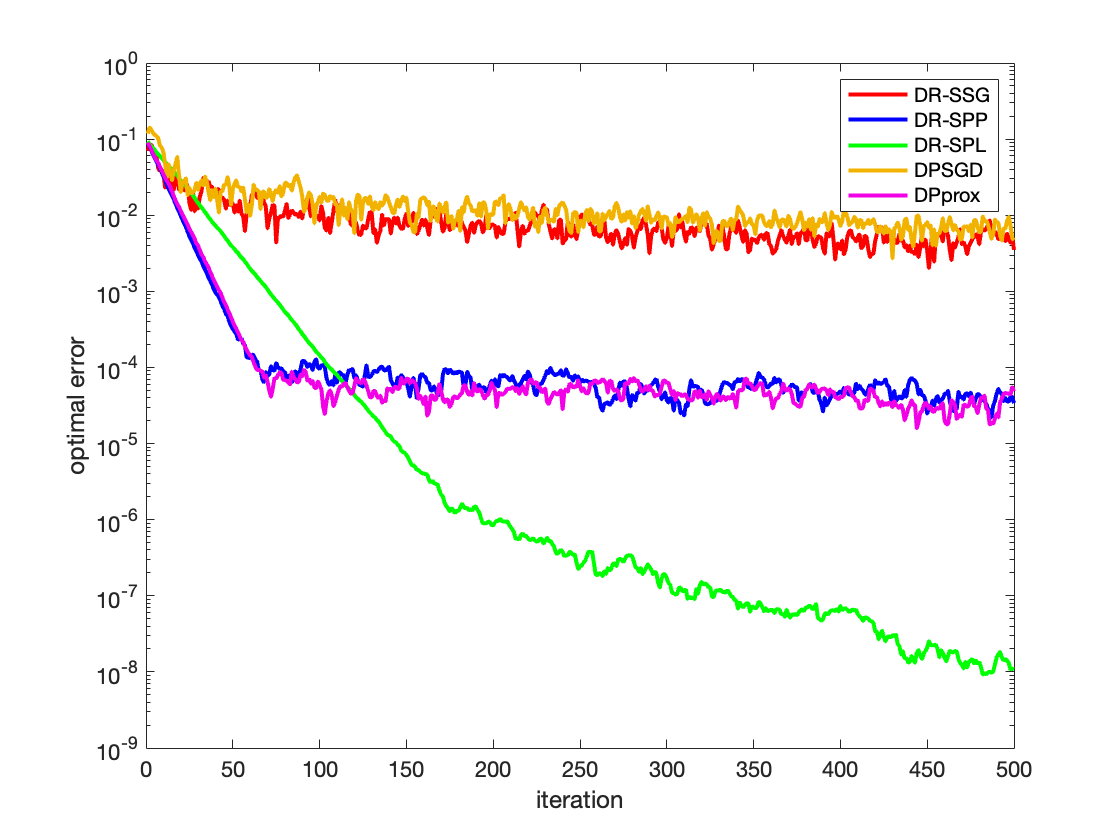} }
	\caption{The performance of the three proximal model-based algorithms, DPSGD, and DPprox under different communication graphs, with $\beta_0 = 0.15$ for DR-SSG and DPSGD, $\beta_0 = 0.1$ for DR-SPP and DPprox, and $\beta_0 = 0.5$ for DR-SPL.}
	\label{fig:bd}
\end{figure*}

Blind deconvolution problem aims at recovering a pair of signals $x \in S^{n-1} \subset \mathbb{R}^{n}$ and $y \in \mathbb{R}^{m}$ from their convolution $b = x \odot y$, where $\odot$ represent pairwise convolution. 
Denote $S^{n-1}:=\{x \in \mathbb{R}^n| x^\top x = 1\}$ as a $n$-dimensional sphere, which is an embedded submanifold of $\mathbb{R}^n$. Focusing on the real-valued case, the distributed problem can be modeled as 
$\min_{x \in S^{n-1},y \in \mathbb{R}^{m}}~\frac{1}{N}\sum_{i=1}^N E_{a_i,c_i}[|\langle a_i,x \rangle \langle c_i,y \rangle - b_i|],$ where $i$ is the index of agent, $a_i,c_i$ are known values obey certain distributions and $b_i$ denotes the convolution measurement.

Let the measurements be generated from the standard Gaussian distribution $a_i \sim N(0, I_{n\times n})$ and $c_i \sim N(0, I_{m\times m})$ where $n=10$ and $m=15$. We randomly generate the target signal $x^*$ on the unit sphere and $y^* \in \mathbb{R}^m$. Each node collects the convolution measurement modeled by $b_i = \langle a_i, x^* \rangle \langle c_i, y^* \rangle$.

Assume there are $N=20$ nodes that work together to solve the problem, and each node has a local copy of the convolution kernel $x_i$ and signal $y_i$  satisfying $x_1=\dots =x_N$ and $y_1=\dots = y_N$. We analyze the algorithms' performance under three different communication networks where the underlying graphs are ring graph, ER graph ($p=0.2$), and complete graph, respectively.  Weights of the adjacency matrix are constructed by the Metropolis rule ($\epsilon=0.5$)
$$w_{ij} = \left\{ \begin{array}{ll}
\frac{1}{\max \{|d_i|,|d_j| \}+\epsilon},&(i,j) \in \mathcal{E}\\
0,&(i,j) \notin \mathcal{E},~i\ne j\\
1-\sum_{j\ne i} w_{ij},&i=j
\end{array}  \right. $$ Let the multi-step consensus $t=1$, and the step size $\alpha = 1$. The step sizes satisfy $\beta_k=\frac{\sqrt{k+1}}{\beta_0}$. Initials are randomly generated and equal for all nodes. The optimal error is measured by the error between $(\hat x, \bar y)$ and $(x^*,y^*)$. We implement our proposed algorithms as follows.

{\bf Distributed Riemannian stochastic subgradient (DR-SSG).} Since the local cost function is expectation-valued, the distributed stochastic subgradient evaluate the local subgradient of each agent by randomly calculate $\partial_R(f_i(x_i; (a_i,c_i))) = \mathcal{P}_{T_{X_i}\mathcal{M}}[\partial f_i(x_i; (a_i,c_i))]$, where $(a_i,c_i)$ represent the data sample selected from their distribution.

{\bf Distributed Riemannian stochastic proximal point (DR-SPP).} The distributed stochastic proximal point algorithm needs to solve the local subproblem $\min_{v\in T_{x_{i,k}} S^{n-1},y}~ |\langle a_i,(x_{i,k}+v) \rangle \langle c_i,y \rangle - b_i| +\frac{\beta_k}{2}\|v\|^2+ \frac{\beta_k}{2}\|y-y_{i,k}\|^2$.

{\bf Distributed Riemannian stochastic proximal linear (DR-SPL).} Finally, the distributed proximal linear algorithm also requires to calculate a subproblem at each iteration to obtain the search direction:
$\min_{(v_1,v_2) \in T_{x_{i,k}}S^{n-1} \times \mathcal{R}^m }~|\langle a_i,x_{i,k} \rangle \langle c_i,y_{i,k} \rangle + \langle c_i,y_{i,k} \rangle \langle a_i,v_1 \rangle+ \langle a_i,x_{i,k} \rangle \langle c_i,v_2 \rangle-b |+\frac{\beta_k}{2} \|v_1\|^2+\frac{\beta_k}{2} \|v_2\|^2.$ 

The solution methods of the subproblems are provided in Appendix D.                                                                                                             

We compare the proposed algorithms with the stochastic version of \cite{deng2023decentralized} (DPSGD) and \cite{wang2024proxtrack} (DPprox). The empirical results are shown in Fig.\ref{fig:bd}. In all communication structures, our proposed DR-SPL exhibits superior long-term convergence characteristics finally achieving the highest precision, while its initial descent is more gradual compared to proximal point algorithms, namely DPprox [51]  and DR-SPP.  Our proposed DR-SPP has a considerable performance with DPprox [51], which achieves the fastest initial convergence during the first 150 iterations, but reaches a plateau thereafter, allowing DR-SPL to eventually outperform it. Moreover, DR-SSG has the slowest convergence rate and smallest accuracy, performing similarly to DPSGD [17].
 In general, the DR-SPL algorithm shows the best overall performance from both the convergence rate and the stability perspectives.

\end{document}